\newtheorem{thm}{Theorem}
\newtheorem{lemma}[thm]{Lemma}
\newtheorem{prop}[thm]{Proposition}
\newtheorem{cor}[thm]{Corollary}
\newtheorem{rem}[thm]{Remark}
\newtheorem{defi}[thm]{Definition}
\newcommand{\sign}{\text{\rm sign}}
\begin{document}

\title{A Generalization of a Classical Geometric Extremum Problem}

\author{Petar Kenderov}
\address{P. Kenderov\\Institute of Mathematics and Informatics\\Bulgarian Academy
of Sciences\\Acad. G. Bonchev 8, 1113 Sofia, Bulgaria}
\email{vorednek@gmail.com}

\author{Oleg Mushkarov}
\address{O. Mushkarov\\Institute of Mathematics and Informatics\\Bulgarian Academy
of Sciences\\Acad. G. Bonchev 8, 1113 Sofia, Bulgaria}

\email{muskarov@math.bas.bg}

\author{Nikolai Nikolov}
\address{N. Nikolov\\Institute of Mathematics and Informatics\\Bulgarian Academy
of Sciences\\Acad. G. Bonchev 8, 1113 Sofia, Bulgaria \vspace{1mm}
\newline Faculty of Information Sciences\\
State University of Library Studies and Information Technologies\\
Shipchenski prohod 69A, 1574 Sofia, Bulgaria}

\email{nik@math.bas.bg}

\thanks{The third-named author was partially supported by the Bulgarian National
Science Fund, Ministry of Education and Science of Bulgaria under contract KP-06-N82/6.}

\keywords{convex set, extremal problem, Philo' line, critical
point, convex polytope, concurrent lines}

\subjclass[2020]{51M16, 52A20, 52A40, 49K05, 49K10}

\begin{abstract}
Let $\partial \,\mathcal{C}$ be the boundary of a compact convex body $\mathcal{C}$ in $\mathbb{R}^n,\, n\geq 2$, and $O$ be an interior point of $\mathcal C$. Every straight line $l$ containing $O$  cuts from $\mathcal{C}$ a segment $[AB]$ with end-points on $\partial \,\mathcal{C}$. It is shown that if $[AB]$  is the shortest such segment, then $\partial \,\mathcal{C}$ is smooth at the points $A$ and $ B$ (i.e. at both of them there is only one supporting hyperplane for $\mathcal{C}$) and, something more, the normals to the unique supporting hyperplanes at the points $A$ and $B$ intersect at a point belonging to the hiperplane through $O$ which is orthogonal to $[AB]$.

If $\mathcal{C}$ is a smooth  compact convex body in          $\mathbb{R}^n,\, n\geq 2$, the above property holds also when $[AB]$ is the longest such segment. Similar results have place also when $O$ is outside the set $\mathcal{C}$.
The ``local versions'' of these results (when the length $|AB|$ of the segment $[AB]$ is locally maximal or locally minimal) also have a place. More specific results are obtained in the particular case when $\mathcal{C}$ is a convex polytope.

\end{abstract}

\maketitle

\section{Introduction}\label{si}

This article's starting point was the following well-known geometric extremum problem:

\smallskip

\emph{Given an angle in the plane and an interior point $O$, find
a line $l$ through $O$  which cuts from the angle a segment $[AB]$
of shortest length $|AB|$}.

\smallskip

It is known that this line exists and is uniquely determined by the following property  $(*)$ (see Figure 1):

\smallskip

\emph{ $(*)$ the foot  $E'$ of the perpendicular from the angle
vertex $E$  to $l$ belongs to $ [AB]$ and $|BE'| = |AO|$}

\begin{figure}[h]  \label{f1a}
\centering
\includegraphics[scale=0.8]{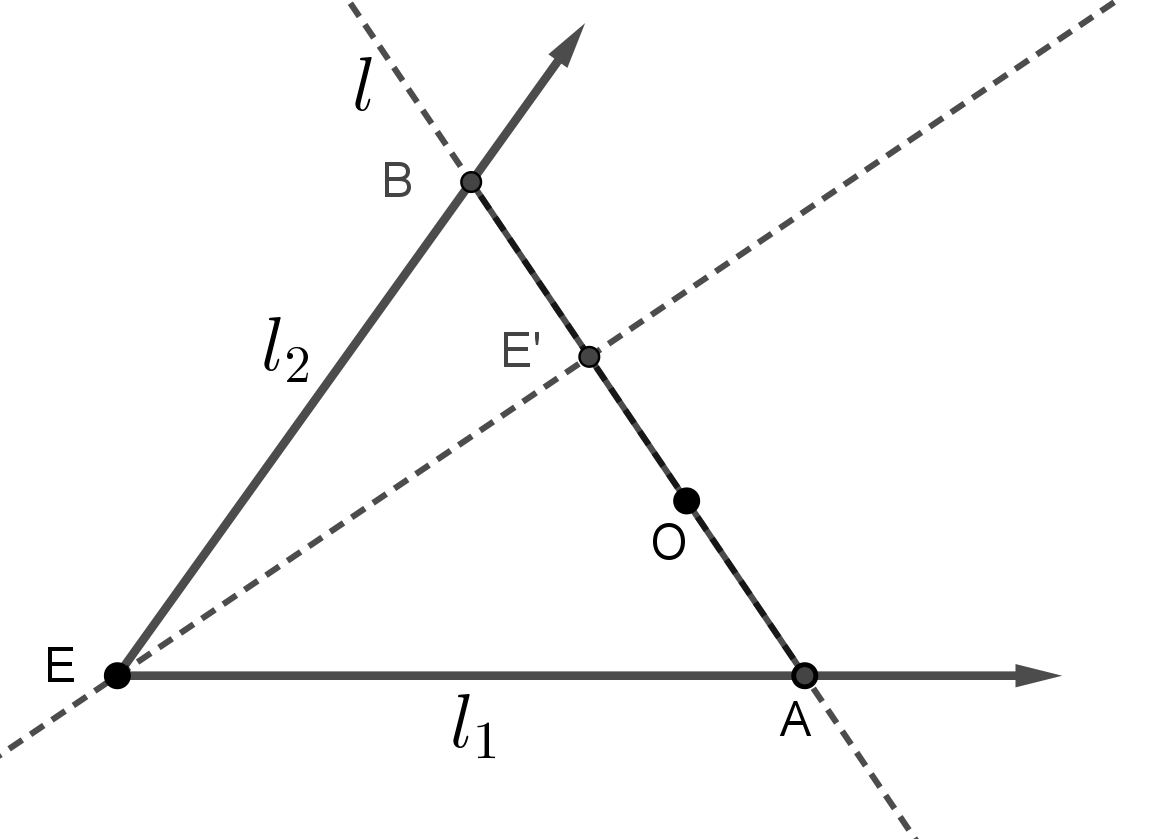}
\caption*{Figure 1}
\end{figure}


This characterization of the optimal line $l$ suggests a way for its construction, described in the articles of Eves \cite{EVES} and Coxeter and van de Craats  \cite{CoxCra}.   It is mentioned in \cite{CoxCra} that this procedure for constructing $l$ generalizes a construction ``published in 1647 by Gregory of St. Vincent'' for the particular case when the angle $\angle El_1l_2$ is right.

Given an angle $\angle El_1l_2$ and an interior point $O$ in it,
construct first the uniquely determined hyperbola $h$ passing
through $O$ and having as asymptotes the lines containing the arms
$l_1$ and $l_2$ of the angle. Next construct a circle $c$ with
$[EO]$ as diameter (see Figure 1a) and find the second
intersection point $E'$ of $c$ and $h$ (the first common point
being $O$ itself).

\begin{figure}[h]  \label{f1b}
\centering
\includegraphics[scale=0.8]{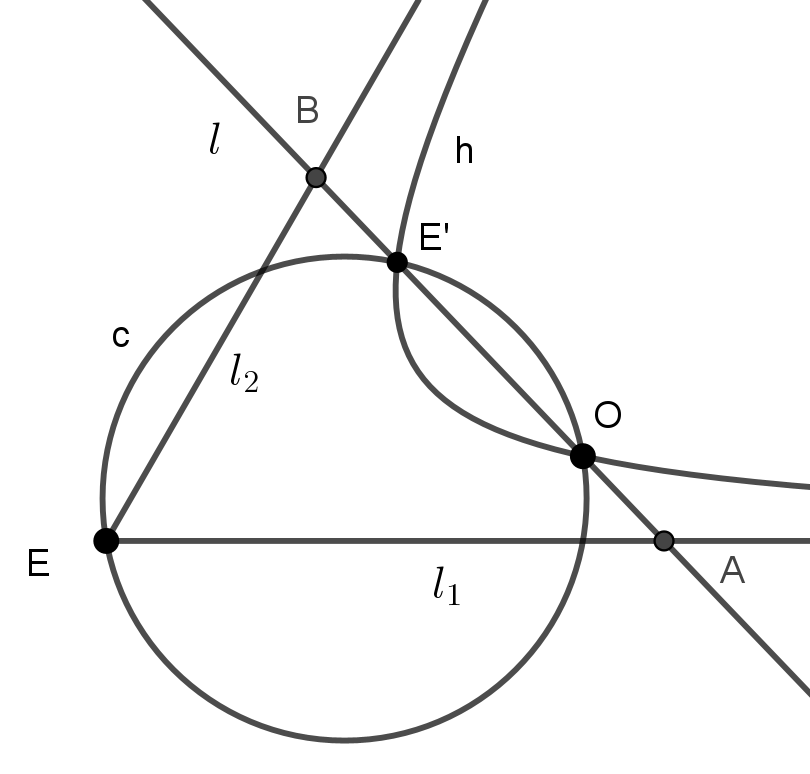}
\caption*{Figure 1a}
\end{figure}

The line $l$ determined by the points $O$ and $E'$ (and cutting
the arms of the angle at $A$ and $B$, respectively) has the
property $(*)$. Indeed, the equality $|BE'| = |OA|$ follows from
the well-known fact (see, for instance, Coxeter \cite{Cox61} 8.92
on p. 134) that any hyperbola secant (in particular, the secant $l
=OE'$) meets the asymptotes in such a way that $|OA|=|BE'|$. Note
also that the angle $\angle E E' O$ is right by construction.

\medskip

It is interesting that, for the particular case when the angle $\angle El_1l_2$ is right, the line $l$
 with the property $(*)$
appeared very early in connection with a completely different
problem. The famous Philo (or Philon) from Byzantium who lived in
the years 280 - 220 BC, used this line to provide one more
solution to the problem of cube doubling which occupied the minds
of the ancient philosophers for a long time (see \cite{Wiki1}).
For this reason, the line $l$ providing the solution to the
optimization problem formulated at the beginning of this article
is frequently called the ``line of Philon'' (or ``Philo's line'').

\medskip

With the advent of the infinitesimals and calculus, which are well
suited for solving optimization problems, the interest in the
above optimization problem increased significantly. Neuberg
\cite{NBRG} notes that the problem is mentioned in Newton's {\it
Opuscules, t.I, p.87}. In two papers (from the years 1795
\cite{ML} and 1811 \cite{ML1}) M. Lhuilier published the results
of his studies on this problem. In a footnote to the first page of
\cite{ML1} the editor of the journal mentions that the same
problem is considered (with different means) by M. Puissant. An
elementary proof of property $(*)$ was published in the fourth
edition of J. Casey's book  \cite{JCas} (p. 39, Proposition 18).
 In 1887 E. Neovius \cite{EN} generalized the problem by allowing the point $O$ to
 be outside the angle, and has found places  for the point $O$ such that the length $|AB|$ of the segment $[AB]$  has one local maximum and one local minimum when the line $AB$ rotates around the point $O$.
 From the more recent studies of this problem, we mention the articles of  H. Eves \cite{EVES},  of O. Bottema \cite{Bot} who used polar coordinates and trigonometry to simplify some of the results of Neovius, and the article of H.S.M. Coxeter and J. van De Craats \cite{CoxCra} mentioned above.

 \medskip

In the sequel, we focus our attention on another characterization
of the optimal line $l$, which is somehow less popular nowadays:

{\it The perpendicular to the line $l$ at the point $O$ and the
perpendiculars to the arms of the angle at the points $A$ and $B$, respectively  (dashed lines in Figure 1b), meet at a point}.

\begin{figure}[h]  \label{f1b}
\centering
\includegraphics[scale=0.8]{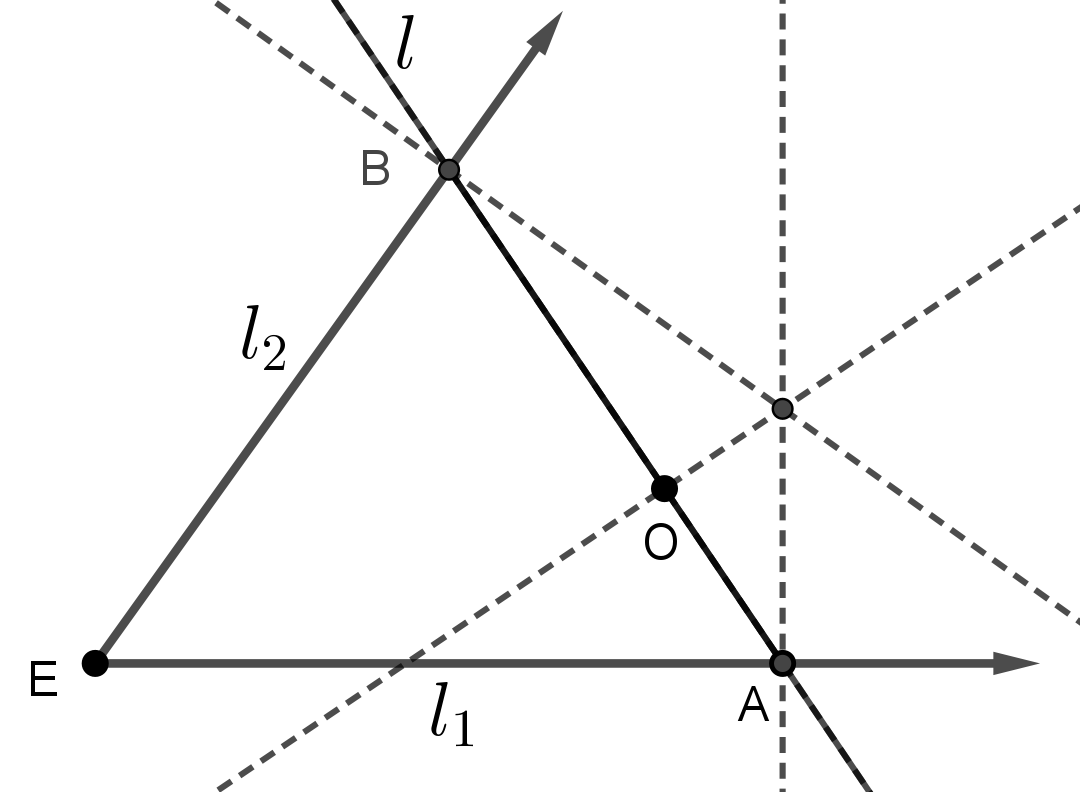}
\caption*{Figure 1b}
\end{figure}

A simple proof that this property is equivalent to the property (*) can be found in the article of Eves \cite{EVES}.
Without proof and as ``communicated by a friendly side'' Neovius mentions, at the end of his paper \cite{EN},
that this property remains valid also in the case when the point $O$ is outside the angle and the length $|AB|$ of the segment $[AB]$ cut from the angle by the line $l$ is locally maximal or locally minimal.

We call this property  \emph{Concurrent Perpendiculars Property} (CPP for short) and prefer to work with it because it does not involve the vertex $E$ of the angle and is valid (for trivial reasons) also when the lines $l_1$ and $l_2$ are parallel. Besides, this property is more amenable to generalizations, which is the main goal of this paper.

 \medskip

It turns out that CPP is a necessary (but not sufficient!) optimality condition in some other geometric extremal problems.
 For instance,  let $\mathcal{C}$ be any compact convex set in the plane $\mathbb{R}^2$ with a fixed interior point $O$.
 Consider all segments $[AB]$ cut from $\mathcal{C}$ by a line $l(\varphi)$ rotating around $O$ with rotation angle $\varphi$.
 The length $|AB|$ of the segment $[AB]$, considered as a function $\tilde{f}(\varphi)$ of the angle
  $\varphi$, may have several local minima (maxima) relative to $\varphi$.
  We prove in Section 2 that for every  local minimum $\varphi_*$ of $\tilde{f}(\varphi)$ with  corresponding segment $[A^*B^*]$ the following statements are valid:

a) $\mathcal{C}$ is \emph{smooth} at $A^*$ and $B^*$ (i.e. there is only one supporting line $l_1$ for $\mathcal{C}$ at $A^*$ and only one supporting line $l_2$ for $\mathcal{C}$ at $B^*$);

b) CPP holds for the lines $l_1, l_2,
A^*B^*$ (the line through $A^*$ and $B^*$)
and the points $A^*,
B^*, O ,$ respectively.

c) $\tilde{f}(\varphi)$ is differentiable at $\varphi_*$ and $\tilde{f}'(\varphi_*)=0$.

\smallskip
This result cannot be extended to local maxima of
$\tilde{f}(\varphi)$. For example, if $\mathcal{C}$ is a polygon
and $\varphi^*$ is a local maximum for $\tilde{f}(\varphi)$, then
at least one of the end-points $A^*$ and $B^*$ of the
corresponding segment $[A^*B^*]$ must be a vertex of
$\mathcal{C}$.

However, if $\mathcal{C}$ is a smooth compact convex set in the plane with interior point $O$, then b) and c) remain valid for local maxima as well.
 \medskip

In Section 2, we show that similar results are also valid when the point $O$ is outside the compact convex set $\mathcal{C} \subset \mathbb{R}^2$.

In Section 3, we show how the results from Section 2 for sets in $ \mathbb{R}^2$ imply results for sets in $\mathbb{R}^n, n >2$, which are more general than the results announced in the abstract of the paper.

\medskip

In Section 4 we consider the special case when $\mathcal{C}$ is a
convex polytope. and bring to the fore some specific for these
sets properties.
\medskip

The practical relevance of the problems considered here is discussed briefly in the Appendix to the article.
In what follows, we denote by
$\angle ABC$ both the elementary geometry angle and its measure in radians.

\medskip

\section{The case when the set $\mathcal{C}$ is in $\mathbb{R}^2 $. }

We start with a simple observation that reveals where CPP comes from.

Let $A$ be a point from a line $p$ and $O$ be a point outside $p$.
Denote by $O'$ the orthogonal projection of $O$ on $p$ and put
$\varphi= \angle AOO'$ (Figure 2). Depending on the position of
$A$, the angle $ \varphi$ varies between $-\pi /2$ and $\pi/2$. If
$A$ is to the left of $O'$,  $\varphi$ is negative. If $A$ is to
the right of $O'$, then $\varphi$ is positive. Denote by $A'$ the
intersection point of the line through A perpendicular to $p$ and
the line through $O$ perpendicular to $OA$ (the dashed lines
in Figure 2).

\begin{figure}[h]  \label{f2}
\centering
\includegraphics[scale=0.4]{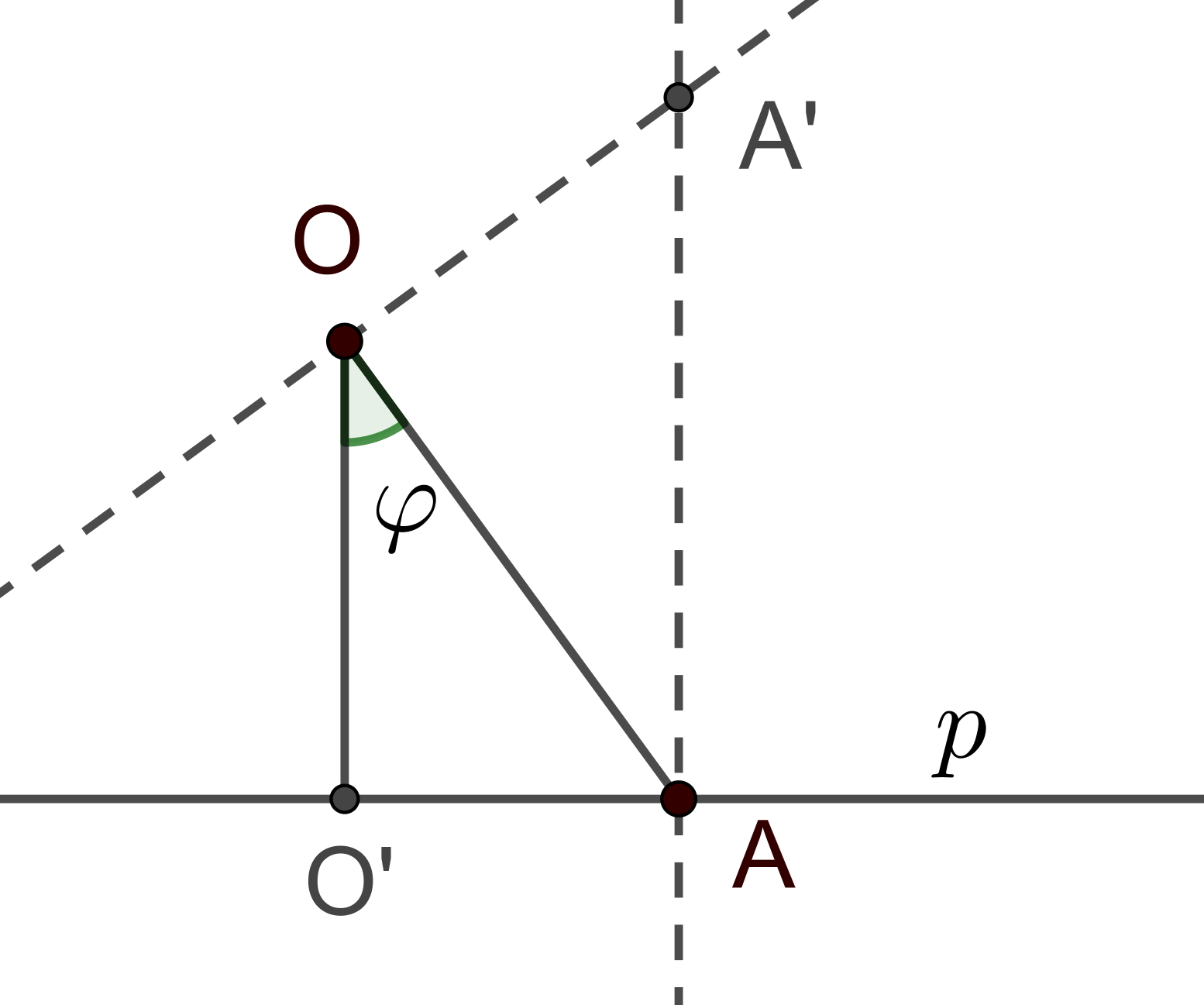}
\caption*{Figure 2}
\end{figure}

\begin{lemma} \label{L1}

The following relations hold for the first derivative $|OA|'_\varphi$ and for the second derivative $|OA|''_\varphi$  of $|OA|$ with respect to the angle $\varphi$ :

(a) $|OA|'_\varphi =|OA|\tan\varphi= \sign(\varphi)|OA'|$ ;

(b) $|OA|''_\varphi = |OA|(\tan^2\varphi + \cos^{-2}\varphi)$.
\end{lemma}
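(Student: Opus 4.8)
The plan is to reduce everything to the single observation that the distance $d := |OO'|$ from $O$ to the line $p$ does not depend on the position of $A$, and to express $|OA|$ as an explicit function of $\varphi$ through this constant. Since $O'$ is the foot of the perpendicular from $O$ to $p$, the triangle $OO'A$ has a right angle at $O'$ and the angle $\varphi$ at $O$; the leg $OO'$ adjacent to $\varphi$ has the fixed length $d$, while $OA$ is the hypotenuse. Hence $\cos\varphi = d/|OA|$, which gives the basic formula $|OA| = d\sec\varphi$, valid for all $\varphi \in (-\pi/2, \pi/2)$ (the formula is even in $\varphi$, matching the symmetry of moving $A$ to either side of $O'$, and $\sec\varphi > 0$ throughout this range).

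Part (a) then splits into an analytic and a geometric half. Analytically, differentiating $|OA| = d\sec\varphi$ with respect to $\varphi$ gives $|OA|'_\varphi = d\sec\varphi\tan\varphi = |OA|\tan\varphi$, which is the first claimed equality. For the second equality I would identify $|OA'|$ geometrically. Because $AA'$ is perpendicular to $p$ and $OO'$ is also perpendicular to $p$, the segments $AA'$ and $OO'$ are parallel; reading off alternate interior angles for the transversal $OA$ shows that the angle of the triangle $OAA'$ at the vertex $A$ equals $\varphi$ in magnitude. By construction this triangle has a right angle at $O$ (since $OA' \perp OA$), so $\tan|\varphi| = |OA'|/|OA|$, i.e. $|OA'| = |OA|\,|\tan\varphi|$. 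Since $\sign(\tan\varphi) = \sign(\varphi)$ on $(-\pi/2,\pi/2)$, we obtain $|OA|\tan\varphi = \sign(\varphi)|OA'|$, completing (a).

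Part (b) is a routine second differentiation: applying the product rule to $|OA|'_\varphi = d\sec\varphi\tan\varphi$ yields $d\sec\varphi(\tan^2\varphi + \sec^2\varphi) = |OA|(\tan^2\varphi + \cos^{-2}\varphi)$, which is exactly the stated formula.

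The only points requiring care --- and hence the mild ``obstacle'' --- are bookkeeping ones rather than conceptual: fixing the sign convention so that $\varphi$ ranges over $(-\pi/2,\pi/2)$ with $\varphi$ negative when $A$ lies left of $O'$, keeping $\sec\varphi$ positive, and correctly identifying which angle of the auxiliary triangle $OAA'$ equals $\varphi$. A coordinate check (placing $O'$ at the origin, $p$ as the $x$-axis, $O = (0,d)$ and $A = (a,0)$ with $a = d\tan\varphi$) can be used to confirm both the angle identification and the sign, should the synthetic argument feel delicate.
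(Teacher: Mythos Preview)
Your proof is correct and follows essentially the same approach as the paper: both start from $|OA|=|OO'|\sec\varphi$, differentiate to get $|OA|\tan\varphi$, and identify this with $\sign(\varphi)|OA'|$ via the right triangle $OAA'$. Your argument is in fact more explicit than the paper's on the geometric identification of $|OA'|$ (the paper simply writes $|OA|\tan\varphi=\sign(\tan\varphi)|OA'|$ without spelling out the alternate-angle reasoning), and for (b) you differentiate $d\sec\varphi\tan\varphi$ directly while the paper applies the product rule to $|OA|\tan\varphi$; these are trivially equivalent.
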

Here, as usual, $\sign(\varphi)= 1$ if $\varphi >0$, $\sign(\varphi)= - 1$ if $\varphi <0$, and
$\sign(\varphi)= 0$ if  $\varphi = 0$.

\begin{proof}
(a) Clearly, $|OA|=|OO'|\cos^{-1}\varphi$. Hence $|OA|'_\varphi
=|OO'|\cos^{-1}\varphi\tan\varphi =
|OA|\tan\varphi=\sign(\tan\varphi)|OA'|= \sign(\varphi)|OA'|$.

\noindent (b) Differentiating $|OA|'_\varphi = |OA|\tan \varphi$
we get $|OA|''_\varphi = |OA|'_\varphi \tan \varphi + |OA|
\cos^{-2}\varphi = |OA| (\tan^2 \varphi + \cos^{-2}\varphi)$.
\end{proof}

\bigskip

\subsection{Point $O$ is in the interior of $\mathcal{C}$}

We first prove the validity of CPP for angles and then derive this
property for more general sets $\mathcal{C}$. Consider an angle
$\angle E l_1l_2$ with vertex $E$,  arms $l_1$ and $l_2$, and
angle measure $\theta$, $0< \theta <\pi$, (Figure 3). Let $O$ be
an interior point for this angle and let $O'$ and $O''$ be the
orthogonal projections of $O$ onto the lines containing $l_1$ and
$l_2$ , respectively. Let $[AB]$ be the segment cut from this
angle by a line $l$ passing through $O$ where $A \in l_1$ and
$B\in l_2$. Put  $\varphi:= \angle AOO'$ if $O'$ belongs to the
ray $\overrightarrow{AE}$ and $\varphi:= -\angle AOO'$ otherwise.
Clearly, the segment $[AB]$ exists only if $ \theta - \pi/2 <
\varphi < \pi/2$. Note that the angle $\varphi$ determines the
line $l$ corresponding to it.  We use the notation $l =
l(\varphi)$ to reflect this fact.

Denote by $A'$  the intersection point of the line through $O$
perpendicular to $l(\varphi)$  and  the line perpendicular to
$l_1$ at $A$ (dashed lines in Figure 3).

Analogously, put $\psi:= \angle O''OB$ if $O''$ belongs to the ray
$\overrightarrow{BE}$ and $\psi:= -\angle O''OB$ otherwise. Denote
by $B'$ the intersection point of the line through $O$
perpendicular to $l(\varphi)$ and the line through $B$
perpendicular to $l_2$.

\begin{figure}[h]  \label{f3}
\centering
\includegraphics[scale=0.4]{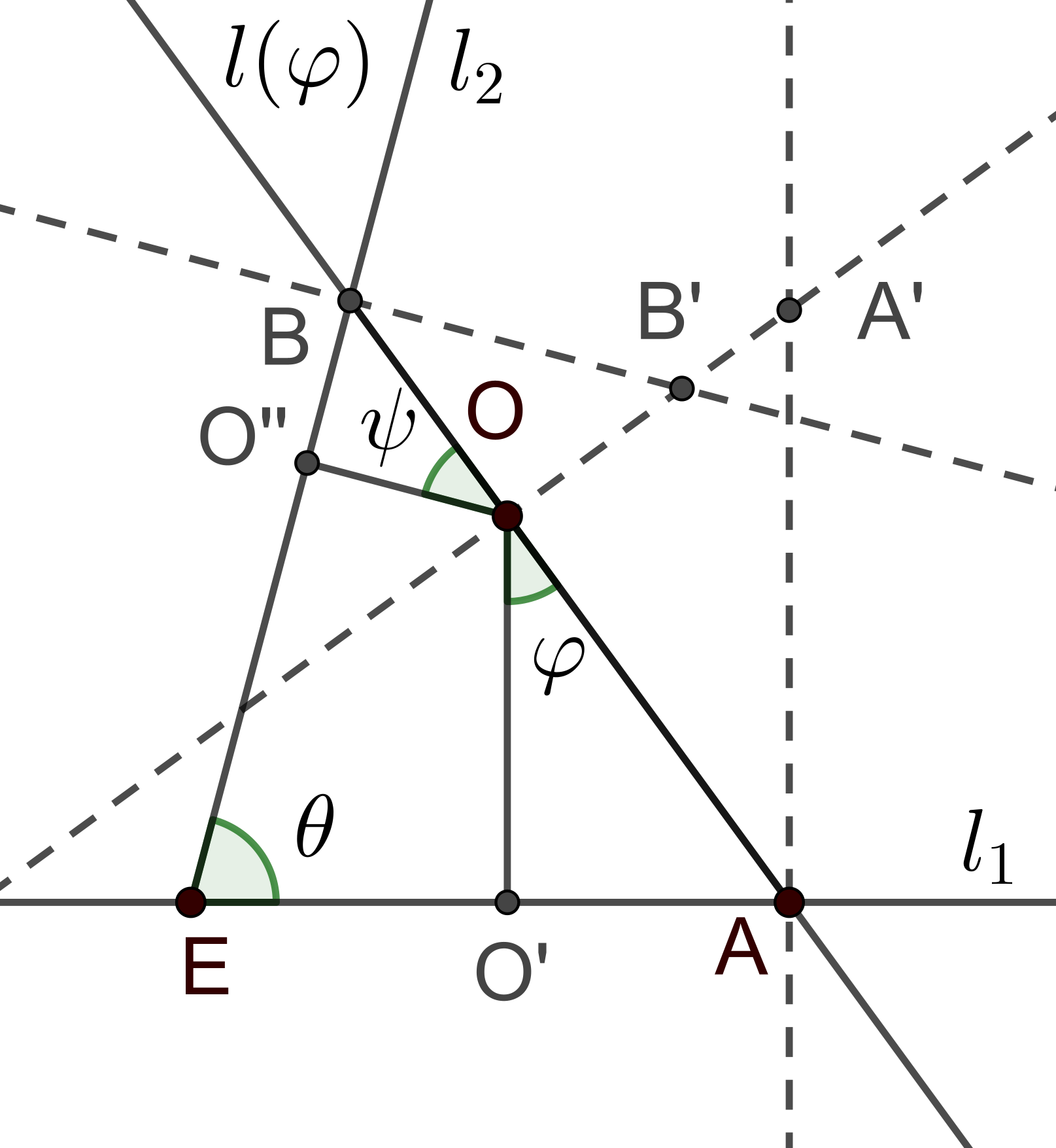}
\caption*{Figure 3}
\end{figure}

\begin{prop}\label{Rem1}
The function $f(\varphi):=|OA| + |OB|$ is differentiable and strictly convex.
For its derivative we have $f' (\varphi)= \sign(\varphi)|OA'| - \sign(\theta - \varphi)|OB'|$.
\end{prop}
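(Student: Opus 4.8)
The plan is to treat the two arms of the angle separately, applying Lemma \ref{L1} to each, and then to link them through a single geometric identity relating the two angular parameters. Observe that $|OA|$ is, by construction, exactly the quantity whose derivative is computed in Lemma \ref{L1} for the line $p=l_1$ with signed angle $\varphi=\angle AOO'$, while $|OB|$ is the same quantity for the line $p=l_2$ with signed angle $\psi=\angle O''OB$ (here the line through $O$ perpendicular to $l(\varphi)$ is the line perpendicular to $OB$ at $O$, so the auxiliary point $B'$ matches the point $A'$ of the lemma). Thus Lemma \ref{L1}(a) gives $|OA|'_\varphi=\sign(\varphi)|OA'|=|OA|\tan\varphi$ and $|OB|'_\psi=\sign(\psi)|OB'|=|OB|\tan\psi$, and Lemma \ref{L1}(b) supplies $|OA|''_\varphi=|OA|(\tan^2\varphi+\cos^{-2}\varphi)$ together with the analogous formula for $|OB|''_\psi$. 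The remaining task is to express everything through the single variable $\varphi$.

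The key step is the identity $\psi=\theta-\varphi$. To establish it, I would consider the triangle $EAB$, in which $O$ lies on the side $[AB]$ and the angle at the vertex $E$ equals $\theta$. Since $O'$ is the foot of the perpendicular from $O$ to $l_1$, the right triangle $OO'A$ shows that the interior angle of $EAB$ at $A$ equals $\pi/2-\varphi$; one verifies that this holds with the correct sign in both cases of the definition of $\varphi$ (whether $O'$ lies on the ray $\overrightarrow{AE}$ or on its opposite ray). The same argument on the second arm gives the interior angle at $B$ equal to $\pi/2-\psi$. Adding the three angles of $EAB$ yields $\theta+(\pi/2-\varphi)+(\pi/2-\psi)=\pi$, that is, $\psi=\theta-\varphi$. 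I expect this sign bookkeeping to be the main obstacle, since the relation must be shown to hold uniformly across the admissible range $\theta-\pi/2<\varphi<\pi/2$.

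With $\psi=\theta-\varphi$ in hand, the chain rule gives $d\psi/d\varphi=-1$, hence
\[
\frac{d|OB|}{d\varphi}=\frac{d|OB|}{d\psi}\cdot\frac{d\psi}{d\varphi}=-\sign(\psi)|OB'|=-\sign(\theta-\varphi)|OB'|.
\]
Combining this with the formula for $|OA|'_\varphi$ produces
\[
f'(\varphi)=|OA|'_\varphi+\frac{d|OB|}{d\varphi}=\sign(\varphi)|OA'|-\sign(\theta-\varphi)|OB'|,
\]
which is the claimed expression. Differentiability of $f$ follows from the equivalent smooth form $f'(\varphi)=|OA|\tan\varphi-|OB|\tan(\theta-\varphi)$, which is well defined because on the admissible interval both $\varphi$ and $\theta-\varphi$ lie in $(-\pi/2,\pi/2)$ (using $0<\theta<\pi$).

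Finally, to obtain strict convexity I would differentiate once more. Since $d^2\psi/d\varphi^2=0$, the chain rule gives $d^2|OB|/d\varphi^2=|OB|''_\psi=|OB|(\tan^2(\theta-\varphi)+\cos^{-2}(\theta-\varphi))$, while Lemma \ref{L1}(b) gives the analogous positive expression for $|OA|''_\varphi$. Therefore
\[
f''(\varphi)=|OA|(\tan^2\varphi+\cos^{-2}\varphi)+|OB|(\tan^2(\theta-\varphi)+\cos^{-2}(\theta-\varphi))>0,
\]
each summand being strictly positive because $|OA|,|OB|>0$ and $\cos^{-2}>0$. Hence $f$ is strictly convex on the whole admissible interval, completing the argument.
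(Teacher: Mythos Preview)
Your proposal is correct and follows essentially the same route as the paper: apply Lemma~\ref{L1} to each arm, use the relation $\psi=\theta-\varphi$ to reduce to a single variable, and read off $f'$ and $f''>0$. The only difference is cosmetic---you justify $\psi=\theta-\varphi$ via the angle sum of triangle $EAB$, whereas the paper simply asserts it and then differentiates $|OB|=|OO''|\cos^{-1}(\theta-\varphi)$ directly rather than invoking the chain rule.
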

\begin{proof}
As $ \theta - \pi/2 < \varphi < \pi/2$, we have $ \theta - \pi/2 < \theta -\varphi < \pi/2$ as well . It is not difficult to realize that, for every  possible $\varphi$, $\psi = \theta - \varphi$.
Evidently, $|OB|=|OO''|\cos^{-1}(\theta - \varphi)$ and we can calculate, as in Lemma 1, that

{\it (a')} $|OB|'_\varphi =- |OB| \tan (\theta - \varphi) =- \sign( \tan(\theta - \varphi))|OB'| = -\sign(\theta - \varphi)|OB'|$ and

{\it (b')} $|OB|''_\varphi = |OB|(\tan^2(\theta - \varphi) + \cos^{-2}(\theta -\varphi))$.

Strict convexity of $f$ follows from property (b) of Lemma 1 and (b'). The expression for the derivative follows from  property (a) of Lemma 1 and (a').
\end {proof}

\begin{prop} \label{mainprop}
(CPP for angles). Let $O$ be an interior point for an angle $\angle El_1l_2$ with angle measure $\theta, 0<\theta < \pi$. Consider all segments $[AB]$ cut from this angle by lines $\l(\varphi)$, $ \theta - \pi/2 < \varphi < \pi/2$.  For any angle  $\varphi_0$, $ \theta - \pi/2 < \varphi_0 < \pi/2$, the following statements are equivalent:

(i) $\varphi_0$ is a critical point of the function $f(\varphi)=
|AB|$ (i.e. $f'(\varphi_0) = 0)$.

(ii) The segment $[A_0B_0]$ corresponding to the line
$l(\varphi_0)$ is the shortest among all segments $[AB]$ cut from
the angle by lines passing through $O$.

(iii)  (CPP) The perpendiculars to $l_1$ at $A_0$, to $l_2$ at
$B_0$ and to the line $A_0B_0$ at $O$ meet at a point.

There exists only one  $\varphi_0$, $ \theta - \pi/2 < \varphi_0 < \pi/2$, for which one (and then all) of these three equivalent statements has a place.

\end{prop}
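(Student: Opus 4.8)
The plan is to prove the proposition as a cycle of three observations, handling (i)$\Leftrightarrow$(ii) together with the uniqueness claim by a convexity argument, and isolating (i)$\Leftrightarrow$(iii) as the genuinely geometric step.

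First I would dispose of (i)$\Leftrightarrow$(ii) and the uniqueness statement at once. By Proposition \ref{Rem1} the function $f(\varphi)=|OA|+|OB|$ is strictly convex on the open interval $(\theta-\pi/2,\pi/2)$. Since $|OA|=|OO'|\cos^{-1}\varphi\to\infty$ as $\varphi\to\pi/2$ and $|OB|=|OO''|\cos^{-1}(\theta-\varphi)\to\infty$ as $\varphi\to\theta-\pi/2$, the function $f$ tends to $+\infty$ at both endpoints. A continuous strictly convex function with this boundary behaviour attains its infimum at exactly one interior point, and that point is the unique zero of $f'$. Hence a critical point exists, is unique, and coincides with the global minimizer; this yields (i)$\Leftrightarrow$(ii), the uniqueness assertion, and for free the fact that the optimal segment is genuinely the shortest rather than merely locally shortest.

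For (i)$\Leftrightarrow$(iii) I would first restate CPP in a form that removes the vertex $E$. By their very definitions, both auxiliary points $A'$ and $B'$ lie on the common perpendicular $m$ to $l(\varphi_0)$ at $O$: $A'$ is the intersection of $m$ with the perpendicular to $l_1$ at $A_0$, and $B'$ is the intersection of $m$ with the perpendicular to $l_2$ at $B_0$. Consequently the three lines figuring in CPP are concurrent if and only if $A'=B'$: if they meet at a point $Q$, then $Q$ lies on $m$ and on the perpendicular to $l_1$ at $A_0$, forcing $Q=A'$, and likewise $Q=B'$; conversely $A'=B'$ is already a common point of all three. No degeneracy occurs here, since the perpendicular to $l_1$ at $A_0$ is parallel to $m$ only when $l_1\parallel l(\varphi_0)$, which is impossible because $A_0\in l_1\cap l(\varphi_0)$. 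Thus it suffices to decide when $A'=B'$. To do so I would place $O$ at the origin with $l(\varphi_0)$ along the $x$-axis, so that $m$ is the $y$-axis and $A_0,B_0$ lie on opposite sides of $O$ on the $x$-axis. Computing the intersection of $m$ with the perpendicular to each arm and using $|OA'|=|OA||\tan\varphi|$, $|OB'|=|OB||\tan(\theta-\varphi)|$ from Lemma \ref{L1} together with $\psi=\theta-\varphi$, I obtain the signed ordinates $y_{A'}=-\sign(\varphi_0)\,|OA'|$ and $y_{B'}=-\sign(\theta-\varphi_0)\,|OB'|$. Then $A'=B'$ is equivalent to $y_{A'}=y_{B'}$, i.e. to $\sign(\varphi_0)|OA'|=\sign(\theta-\varphi_0)|OB'|$, which by the derivative formula of Proposition \ref{Rem1} is exactly $f'(\varphi_0)=0$. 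This closes the equivalence.

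The main obstacle I anticipate is the sign bookkeeping in the last step: expressing the ordinates uniformly as $y_{A'}=-\sign(\varphi_0)|OA'|$ and $y_{B'}=-\sign(\theta-\varphi_0)|OB'|$ requires checking separately the cases where the perpendicular feet $O',O''$ do or do not lie on the rays $\overrightarrow{AE},\overrightarrow{BE}$ (equivalently, whether the arms' direction angles in the chosen frame are acute or obtuse), and verifying that the sign conventions in the definitions of $\varphi$ and $\psi$ produce the same formulas $\alpha=\pi/2-\varphi$, $\beta=\pi/2+\psi$ for the arm angles in every case. Once this is pinned down, the computation reduces to the identity $\cot(\pi/2\mp t)=\pm\tan t$ and is routine.
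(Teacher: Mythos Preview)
Your proposal is correct and is close in spirit to the paper's proof, but your execution differs in two ways that are worth recording.

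For (i)$\Leftrightarrow$(ii) and uniqueness, the paper splits into the cases $0<\theta<\pi/2$ and $\pi/2\le\theta<\pi$: in the first case it checks the sign of $f'$ on $(\theta-\pi/2,0]$ and on $[\theta,\pi/2)$ to trap the critical point in $(0,\theta)$, and in the second case it observes $\varphi,\theta-\varphi>0$ throughout and uses that $f\to+\infty$ at both ends. Your argument (strict convexity from Proposition~\ref{Rem1} plus $f\to+\infty$ at both endpoints, valid for every $\theta\in(0,\pi)$) is more economical and removes the case split entirely.

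For (i)$\Leftrightarrow$(iii), both proofs reduce CPP to the condition $A'=B'$. The paper first localizes the critical point to the range where $\sign(\varphi)=\sign(\theta-\varphi)=1$, notes that there $f'=|OA'|-|OB'|$ and $O\notin[A'B']$, and concludes $f'=0\Leftrightarrow A'=B'$. Your route instead computes the signed ordinates $y_{A'},y_{B'}$ on the perpendicular $m$ at $O$ directly, obtaining $y_{A'}=y_{B'}\Leftrightarrow f'(\varphi_0)=0$ for every $\varphi_0$ in the domain. This buys you a uniform proof of the equivalence that does not rely on already knowing where the critical point sits; in particular it makes explicit why CPP fails when $\varphi_0\le 0$ or $\varphi_0\ge\theta$ (the ordinates $y_{A'},y_{B'}$ then have opposite signs), a point the paper leaves implicit. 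The price, as you correctly anticipate, is the sign bookkeeping; once the formula $y_{A'}=-\sign(\varphi_0)|OA'|$ (and its analogue for $B'$) is verified against the sign conventions in the definitions of $\varphi$ and $\psi$, the argument is complete.
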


\begin{proof}
 We consider separately the cases when $0< \theta< \pi/2$, and when  $\pi/2  \leq  \theta < \pi$.

\smallskip

Assume  first that $0 < \theta< \pi/2$. This case is depicted in
Figure 3. For $\varphi \leq 0$, $f'(\varphi) = -|OA'| - |OB'| <0$.
If $\varphi \geq \theta$, then $f'(\varphi) = |OA'| + |OB'| >0$.
Strict convexity of the function $f(\varphi)$ implies that it has
a unique minimizer $\varphi_0$ which belongs to the open interval
$(0, \theta)$ and is characterized by the relation
$f'(\varphi_0)=0$.  This shows that (i) and (ii) are equivalent.

For $\varphi$ from the open interval $(0, \theta)$ we have $f'(\varphi) = |OA'| - |OB'|$. Note also that in this case $O \not \in [A'B']$.
Hence, $f'(\varphi) = |OA'| - |OB'| =0$ if and only if $A'=B'$. This means that (ii) and (iii) are equivalent which completes the proof of Proposition \ref{mainprop}
for the case when $0< \theta< \pi/2$.

\smallskip

The case $\pi/2 \leq \theta < \pi$ is even simpler. From $\varphi
> \theta - \pi/2 \geq 0$ and  $\theta - \varphi > \theta - \pi/2
\geq 0$ we derive that $f'(\varphi) = |OA'| - |OB'|$. The strictly
convex function $f(\varphi)$  tends to $+\infty$ when $\varphi $
tends to $ \theta - \pi/2$ from above and
 to $\pi/2 $ from below.
This implies  $f(\varphi) $ has a unique minimizer $\varphi_0$
characterized by the relation $f'(\varphi_0)=0$. Therefore we
conclude that (i) and (ii) are equivalent.

Note also that in this case $O\not \in [A'B']$. This means that $f'(\varphi) = |OA'| - |OB'| =0$ if and only if $A'=B'$. Therefore (i) and (iii) are equivalent.

\end{proof}

\begin{rem} \label{rem1}
{\rm The above statement remains true also in the degenerate case
of a strip bounded by two different parallel lines. Moreover, in this case CPP has a place for every point $O \in [A_0B_0]$ and the line $A_0B_0$ is perpendicular to the lines bounding the strip.
}
\end{rem}

\medskip

Recall that a line $p$ is said to be \emph{supporting} for a set $\mathcal C \subset \mathbb{R}^2$ at some point $A \in \mathcal C$ if $A \in p$
 and  $\mathcal C$ lies entirely in one of the two closed half-planes determined by $p$.
It is well-known that for every point $A$ from the boundary $\partial\, \mathcal C$
of a closed convex set $\mathcal C$ with nonempty interior there exists at least one supporting line at this point.

Let $\mathcal C$ be a compact convex set $\mathcal C$ in $ \mathbb{R}^2$ and $O$ be an interior point of it.
Consider a line $l(\varphi)$ rotating around $O$
 with $\varphi$ as angle of rotation. Put $\tilde{f}(\varphi) = |AB|$ where $[AB]=  l(\varphi)\cap \mathcal{C}$.
 Numerical experiments with GeoGebra show
  that the function  $\tilde{f}(\varphi)$ may have several local minima  and local maxima. For instance, consider the ellipse
  $$\left( \frac{x}{12} \right)^2 + \left( \frac{y}{1.6} \right)^2 = 1$$
  and the point $O(-1,1.4)$ which is interior for the ellipse.
  The graph of the function $\tilde{f}(\varphi)$ is exhibited on the picture below.


  \begin{figure}[h]  \label{Picture1}
\centering
\includegraphics[scale=0.3]{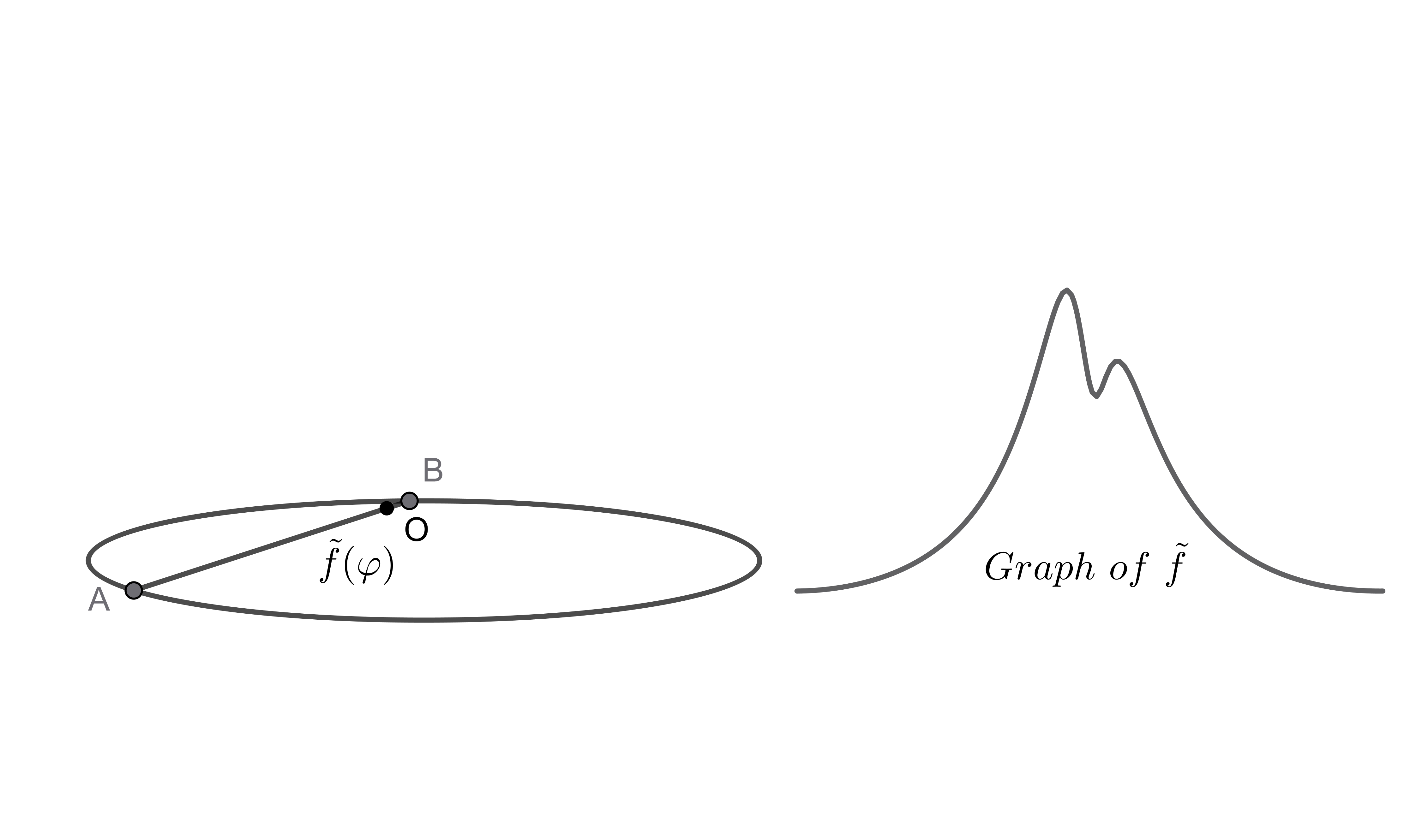}
\caption*{Ellipse with axes  a =12, b = 1.6, and point O(-1,1.4)}
\end{figure}

 The next Proposition gives some information about the local minima of $\tilde{f}(\varphi)$.

\begin{prop} \label{dim2prop}
Let $\mathcal C \subset \mathbb{R}^2$ be a compact convex set,  $O$ be an interior point of it, and
$\tilde{f}(\varphi) := |\tilde{A}(\varphi)\tilde{B}(\varphi)|$ where $[\tilde{A}(\varphi)\tilde{B}(\varphi)]=  l(\varphi)\cap \mathcal{C}$. Suppose $\varphi_*$ is a local minimum for $\tilde{f}(\varphi)$ and let $[A^*B^*]= l(\varphi_*) \cap \mathcal C$  Then:

(i) CPP holds for every pair of supporting for $\mathcal C$ lines
$l_1$ at $A^*$, $l_2$ at $B^*$ and the point $O \in [A^*B^*]$.

(ii) At each of the end points  $A^*$ and $B^*$ there exists only one supporting line for $\mathcal C$.

(iii) $\tilde{f}(\varphi)$ is differentiable at $\varphi_*$ and $\tilde{f}'(\varphi_*) = 0$.

\end{prop}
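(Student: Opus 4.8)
The plan is to reduce the problem for the convex body $\mathcal C$ to the already-settled case of an angle (Proposition \ref{mainprop}) by sandwiching $\tilde f$ between a constant and a smooth strictly convex function. Fix an \emph{arbitrary} supporting line $l_1$ for $\mathcal C$ at $A^*$ and an arbitrary supporting line $l_2$ at $B^*$. Since $\mathcal C$ lies in the intersection of the two half-planes bounded by $l_1$ and $l_2$, it is contained in the angle (or strip, if $l_1\parallel l_2$) bounded by these lines, and $O$ is interior to it. For $\varphi$ near $\varphi_*$ the rotating line $l(\varphi)$ cuts from this angle a segment of length $f(\varphi)$, which by Proposition \ref{Rem1} is differentiable and strictly convex. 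Because $\mathcal C$ sits inside the angle and both chords lie on the same line through the interior point $O$, the chord $[\tilde A(\varphi)\tilde B(\varphi)]$ cut from $\mathcal C$ is contained in the chord cut from the angle, so $\tilde f(\varphi)\le f(\varphi)$. Moreover, at $\varphi_*$ the line $l(\varphi_*)$ meets $l_1$ and $l_2$ exactly at $A^*$ and $B^*$ (it cannot coincide with either, since those lines avoid the interior point $O$), so the two chords coincide and $f(\varphi_*)=\tilde f(\varphi_*)$.

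Now combine the sandwich with the hypothesis that $\varphi_*$ is a local minimum of $\tilde f$: for $\varphi$ near $\varphi_*$ one has $f(\varphi)\ge \tilde f(\varphi)\ge \tilde f(\varphi_*)=f(\varphi_*)$, so $\varphi_*$ is also a local minimum of the strictly convex differentiable function $f$, forcing $f'(\varphi_*)=0$. In the generic case this is exactly the equivalence (i)$\Leftrightarrow$(iii) of Proposition \ref{mainprop}, so CPP holds for the pair $l_1,l_2$; in the degenerate parallel case one invokes Remark \ref{rem1} instead, where $f'(\varphi_*)=0$ means the chord is orthogonal to the strip and CPP again holds. As $l_1,l_2$ were arbitrary supporting lines, this proves (i). One should also note that the angular parameter of the auxiliary angle and the rotation angle of $l$ differ only by an additive constant, so the two notions of derivative in $\varphi$ coincide.

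For (iii) I use the sandwich again, now quantitatively, fixing any one pair $l_1,l_2$. From $0\le \tilde f(\varphi)-\tilde f(\varphi_*)\le f(\varphi)-f(\varphi_*)$ together with differentiability of $f$ and $f'(\varphi_*)=0$, both one-sided difference quotients of $\tilde f$ at $\varphi_*$ are squeezed to $0$ (for $\varphi>\varphi_*$ between $0$ and $\frac{f(\varphi)-f(\varphi_*)}{\varphi-\varphi_*}\to 0$, and symmetrically for $\varphi<\varphi_*$). Hence $\tilde f$ is differentiable at $\varphi_*$ with $\tilde f'(\varphi_*)=0$. I expect this differentiability claim to be the main obstacle, since $\tilde f$ may genuinely have corners at other angles when $\partial\mathcal C$ is not smooth; the whole force of the argument is that the upper bound $f$ is smooth \emph{precisely} because $l_1,l_2$ are straight, which pins the increment of $\tilde f$ from above. (If desired, boundedness of $f''$ near $\varphi_*$ from Lemma \ref{L1}(b) even gives the sharper $\tilde f(\varphi)-\tilde f(\varphi_*)=O((\varphi-\varphi_*)^2)$.)

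Finally, (ii) follows from (i). Suppose $A^*$ had two distinct supporting lines $l_1\ne l_1'$, and fix any supporting line $l_2$ at $B^*$. By (i), CPP produces a concurrency point $P$ for $(l_1,l_2)$ and a point $P'$ for $(l_1',l_2)$; both lie on the perpendicular $m$ to $l_2$ at $B^*$ and on the perpendicular $n$ to $A^*B^*$ at $O$. The lines $m$ and $n$ are not parallel: parallelism would force $l_2\parallel A^*B^*$, hence $l_2=A^*B^*$ since they share $B^*$, contradicting that $l_2$ is supporting while the chord $A^*B^*$ passes through the interior point $O$. Thus $m\cap n$ is a single point and $P=P'$. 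This common point then lies on the perpendiculars to both $l_1$ and $l_1'$ at $A^*$, and it differs from $A^*$ (otherwise $n$ would pass through $A^*$, forcing $OA^*\perp A^*B^*$, impossible as $O,A^*,B^*$ are collinear); sharing the two distinct points $A^*$ and $P$, these perpendiculars coincide, whence $l_1=l_1'$. This contradiction proves uniqueness of the supporting line at $A^*$, and the same argument applies at $B^*$.
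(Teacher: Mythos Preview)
Your proof is correct. For parts (i) and (ii) you follow essentially the paper's argument: the same sandwich $f(\varphi)\ge\tilde f(\varphi)\ge\tilde f(\varphi_*)=f(\varphi_*)$ transfers the local minimum to the strictly convex angle function $f$, and uniqueness of the supporting lines is then deduced from CPP by noting that the concurrency point is already pinned down by the perpendicular to $A^*B^*$ at $O$ together with the perpendicular at the \emph{other} endpoint. Your version is a bit more explicit about the degenerate cases (parallel supporting lines, the possibility $m\parallel n$, the check $P\neq A^*$), but the skeleton is identical.

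Part (iii), however, is a genuinely different route. The paper defers (iii) until after Lemma~\ref{linearization}: it first uses (ii) to know that the unique supporting line at $A^*$ is a tangent to $\partial\mathcal C$, and then invokes the linearization lemma to conclude that $|O\tilde A(\varphi)|$ and $|O\tilde B(\varphi)|$ are differentiable at $\varphi_*$. Your argument bypasses all of this: the two-sided squeeze $0\le \tilde f(\varphi)-\tilde f(\varphi_*)\le f(\varphi)-f(\varphi_*)$ together with $f'(\varphi_*)=0$ forces both one-sided difference quotients of $\tilde f$ to $0$ directly. This is more elementary and self-contained; in particular, it does not require (ii) as an input and does not rely on any regularity of $\partial\mathcal C$ beyond convexity. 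The price is that your method is specific to local minima (the inequality $\tilde f(\varphi)\ge\tilde f(\varphi_*)$ is doing real work), whereas the paper's Lemma~\ref{linearization} is a general tool reused later for local maxima in Proposition~\ref{max} and for the exterior-point case.
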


\begin{proof}
(i) Let ${A}(\varphi)=l(\varphi) \cap l_1$ and ${B}(\varphi)=
l(\varphi) \cap l_2$. For every $\varphi$ such that $l(\varphi)$
intersects both $l_1$ and $l_2$  we have
$[\tilde{A}(\varphi)\tilde{B}(\varphi)]\subseteq
[A(\varphi)B(\varphi)] \cap \mathcal C$. In particular,
${A}(\varphi_*) = A^*$ and ${B}(\varphi_*) = B^*$. Obviously, $
|A(\varphi)B(\varphi)| \geq |\tilde{A}(\varphi)
\tilde{B}(\varphi)|  $. If $\varphi$ is close enough to
$\varphi_*$ we have

$$ |A(\varphi)B(\varphi)| \geq |\tilde{A}(\varphi) \tilde{B}(\varphi)| \geq |A^*B^*|= |A(\varphi_*)B(\varphi_*)|.$$

\noindent Hence $\varphi_*$ is a local minimum for the function $
f(\varphi)=|{A}(\varphi) {B}(\varphi)|$ which, as we have seen in
Proposition \ref{mainprop}, is strictly convex. Therefore,
$\varphi_*$ is a global minimum for $f(\varphi)$ and the CPP we
want to prove follows from property (iii) of Proposition
\ref{mainprop}.

(ii) Let $l_1$ be some supporting for $\mathcal C$ line at $A^*$.
Suppose $\mathcal C$ has two supporting lines $l'_2$ and $l''_2$
at $B^*$. Denote by ${A^*}'$ the intersection point of the line
perpendicular to $l_1$ at $A^*$ and the line perpendicular to the
line $A^*B^*$ at $O$. It follows from (i) that ${A^*}'B^*$ is
perpendicular to both $l'_2$ and $l''_2$. This is possible only if
$l'_2$ and $l''_2$ coincide. Thus we have shown that there is only
one supporting line for  the set $\mathcal C$ at the point $B^*$.
Similarly, one shows that the supporting line for $\mathcal C$ at
$A^*$ is unique as well.

\smallskip

Property (iii) follows from Lemma \ref{linearization}, which will be proved later.
\end{proof}

\medskip

The proof of Proposition \ref{dim2prop} is also valid, with minor changes, for unbounded closed convex sets $\mathcal C$ with nonempty interiors. For such a set, the function $\tilde{f}$ may be equal to $+ \infty$ at some $\varphi$.

\begin{cor}
Let  $\mathcal C$ be a closed convex set and $O$ be an interior point of it. If some $\varphi_*$ belonging to the set $\{\varphi : \tilde{f}(\varphi) < + \infty\}$ is local minimizer for $\tilde{f}(\varphi)$,
then (i), (ii) and (iii) from Proposition \ref{dim2prop} are valid.
 \end{cor}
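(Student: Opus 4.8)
The plan is to reduce the Corollary to Proposition \ref{dim2prop} by checking that the latter's proof survives once we arrange that $\tilde f$ is finite on a whole neighbourhood of $\varphi_*$; the only genuinely new feature created by dropping compactness is that a line $l(\varphi)$ through $O$ may leave $\mathcal C$ in a ray rather than a bounded segment, so that $\tilde f(\varphi)=+\infty$. First I would record the elementary fact that the set $\{\varphi:\tilde f(\varphi)<+\infty\}$ is open: writing $u(\varphi)$ for a direction vector of $l(\varphi)$, one has $\tilde f(\varphi)=+\infty$ exactly when $u(\varphi)$ or $-u(\varphi)$ lies in the (closed) recession cone of $\mathcal C$, and the set of such $\varphi$ is therefore closed. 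Hence $\varphi_*\in\{\tilde f<+\infty\}$ is an interior point of an open interval on which $\tilde f$ is finite, and ``local minimizer'' has its usual meaning there.

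Next I would run the argument of Proposition \ref{dim2prop}(i). Since $\tilde f(\varphi_*)<+\infty$, the endpoints $A^*,B^*$ of $[A^*B^*]=l(\varphi_*)\cap\mathcal C$ are genuine points of $\partial\mathcal C$, so each carries a supporting line; fix supporting lines $l_1$ at $A^*$ and $l_2$ at $B^*$. Because $\mathcal C$ is contained in the intersection of the two closed half-planes they bound and $O$ is interior to $\mathcal C$, the point $O$ is interior to the angle $\angle E l_1 l_2$ (with $0<\theta<\pi$) these lines determine, or, if $l_1\parallel l_2$, interior to the strip they bound. Setting $A(\varphi)=l(\varphi)\cap l_1$, $B(\varphi)=l(\varphi)\cap l_2$ and $f(\varphi)=|A(\varphi)B(\varphi)|$, we have $A(\varphi_*)=A^*$, $B(\varphi_*)=B^*$, and for $\varphi$ near $\varphi_*$ the line $l(\varphi)$ still cuts the angle (strip) in a bounded segment containing $l(\varphi)\cap\mathcal C$, whence
$$f(\varphi)\ \ge\ \tilde f(\varphi)\ \ge\ \tilde f(\varphi_*)\ =\ |A^*B^*|\ =\ f(\varphi_*).$$
Thus $\varphi_*$ is a local minimizer of the strictly convex function $f$ (Proposition \ref{Rem1}), hence its global minimizer, so $f'(\varphi_*)=0$ and CPP for $l_1,l_2,A^*B^*,O$ follows from Proposition \ref{mainprop} (in the degenerate parallel case, from Remark \ref{rem1}). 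Since $l_1,l_2$ were arbitrary supporting lines, this proves (i).

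Parts (ii) and (iii) then go through unchanged. For (ii), if $B^*$ admitted two supporting lines, (i) would force both to be perpendicular to one and the same line, which is impossible unless they coincide; symmetrically at $A^*$. For (iii) I would either invoke Lemma \ref{linearization} exactly as in Proposition \ref{dim2prop}, or argue directly by squeezing: the displayed inequalities give $0\le \tilde f(\varphi)-\tilde f(\varphi_*)\le f(\varphi)-f(\varphi_*)=o(|\varphi-\varphi_*|)$, since $f$ is differentiable at $\varphi_*$ with $f'(\varphi_*)=0$, and this forces $\tilde f'(\varphi_*)=0$. The main obstacle I anticipate is purely the bookkeeping around unboundedness: confirming that the containing angle or strip is nondegenerate enough that $l(\varphi)$ meets it in a bounded segment for all $\varphi$ near $\varphi_*$ (so that $f$ is finite and the squeeze is valid), and correctly routing the parallel case through Remark \ref{rem1}. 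Once finiteness of $\tilde f$ near $\varphi_*$ is secured, no ideas beyond those of Proposition \ref{dim2prop} are needed.
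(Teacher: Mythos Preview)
Your proposal is correct and follows essentially the same approach as the paper, which simply notes that the proof of Proposition~\ref{dim2prop} carries over ``with minor changes'' to the unbounded case; you have made those minor changes explicit, including the routing of the parallel case through Remark~\ref{rem1}. The opening recession-cone argument and the optional squeeze proof of (iii) are pleasant additions but not needed: once supporting lines $l_1,l_2$ are chosen, the containment $l(\varphi)\cap\mathcal C\subset[A(\varphi)B(\varphi)]$ already forces $\tilde f(\varphi)\le f(\varphi)<+\infty$ near $\varphi_*$, so finiteness comes for free and the rest is exactly Proposition~\ref{dim2prop}.
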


As mentioned in the introduction, Proposition \ref{dim2prop} is not valid for local maximizers.

\begin{prop} \label{polygon}
Let $\mathcal C$ be a convex polygon in $\mathbb R^2$, $O$ an interior point of it. Let
$\varphi^*$  be a local maximizer for the function
$\tilde{f}(\varphi)$ with $l(\varphi^*) \cap \mathcal C =
[A^*B^*]$.  Then at least one of the end-points $A^*$ and $B^*$ is
a vertex of $\mathcal{C}$.

\end{prop}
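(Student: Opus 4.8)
The plan is to argue by contraposition: I would assume that \emph{neither} $A^*$ \emph{nor} $B^*$ is a vertex of $\mathcal C$ and deduce that $\varphi^*$ cannot be a local maximizer of $\tilde f$. If $A^*$ is not a vertex, then it lies in the relative interior of some edge $e_1$, which spans a line $l_1$; likewise $B^*$ lies in the relative interior of an edge $e_2 \subset l_2$. The geometric content of ``relative interior'' that I would use is that there is an arc of $\partial\,\mathcal C$ around $A^*$ contained entirely in $e_1$, and similarly an arc around $B^*$ contained in $e_2$. In particular the supporting line of $\mathcal C$ at each of these points is unique and equals $l_1$, resp.\ $l_2$.

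The crux of the argument is to upgrade the inequality used for minima in Proposition~\ref{dim2prop} to an \emph{equality} near $\varphi^*$. I would invoke the continuity of the chord endpoints $\varphi \mapsto \big(\tilde A(\varphi),\tilde B(\varphi)\big)$: since $\tilde A(\varphi^*)=A^*$ sits in the relative interior of $e_1$ and $\tilde B(\varphi^*)=B^*$ in the relative interior of $e_2$, for all $\varphi$ in a sufficiently small neighbourhood of $\varphi^*$ the endpoints remain on these same two edges. Hence on that neighbourhood the chord $[\tilde A(\varphi)\tilde B(\varphi)]$ is exactly the segment cut by $l(\varphi)$ from the angle (or strip) determined by the lines $l_1$ and $l_2$, so that $\tilde f(\varphi)$ coincides \emph{identically} with the function $f(\varphi)=|A(\varphi)B(\varphi)|$ of Proposition~\ref{Rem1} there — not merely bounded by it, which is the essential point distinguishing the maximum case from the minimum case.

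From here the conclusion is immediate. By Proposition~\ref{Rem1} the function $f$ is strictly convex on its interval of definition (and in the degenerate case $l_1 \parallel l_2$ this strict convexity is covered by Remark~\ref{rem1}, where $\tilde f$ is a positive multiple of $\cos^{-1}$ of the rotation angle). A strictly convex function admits no local maximum at an interior point of its domain; therefore $\varphi^*$, being a local maximizer of $\tilde f = f$ near $\varphi^*$, cannot exist. This contradiction shows that at least one of $A^*,B^*$ must be a vertex of $\mathcal C$.

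I expect the main obstacle to be the rigorous justification of the middle step, namely that the chord endpoints remain on the \emph{same} pair of edges for all $\varphi$ close to $\varphi^*$. This rests on the continuity of the map $\varphi \mapsto \big(\tilde A(\varphi),\tilde B(\varphi)\big)$ combined with the fact that a relative-interior point of an edge has a $\partial\,\mathcal C$-neighbourhood contained in a single edge; the parallel-edge subcase has to be noted separately, but it is dispatched by Remark~\ref{rem1}. Everything else is a direct appeal to the strict convexity already established.
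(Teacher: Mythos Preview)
Your proposal is correct and follows essentially the same route as the paper: assume neither endpoint is a vertex, observe that near $\varphi^*$ the chord function $\tilde f$ coincides with the strictly convex angle (or strip) function $f$ of Proposition~\ref{Rem1}, and conclude that no local maximum can occur. The paper's proof is terser and leaves the ``local coincidence $\tilde f=f$'' step implicit, whereas you spell out the continuity justification; otherwise the two arguments match.
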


\begin{proof}
Suppose both $A^*$ and $B^*$ are not vertices of $\mathcal C$.
Then they are interior points of some sides $[A'A'']$ and $[B'B'']$
of $\mathcal C$. If the lines $l_1=A'A''$ and $l_2 =B'B''$
intersect at some point $E$, then $O$ is an interior point for the
angle $\angle Ol_1l_2$  and the strictly convex function
$f(\varphi)$  from Proposition \ref{Rem1} can not have local
maximizers, a contradiction. The same argument holds also if the
lines $l_1$ and $l_2$ are parallel (see Remark \ref{rem1}).

\end{proof}

 Our next immediate goal is to extend the validity of CPP by replacing the lines $l_1, l_2$ with arbitrary smooth curves. This idea is rather old. Botema \cite{Bot} notes that it was mentioned in M. Lhuilier's book \cite{ML} from 1795 (but not mentioned in the later article by Lhuilier \cite{ML1} from 1811). H.Eves says in \cite{EVES} ``This generalization seems to be due to Isaac Newton''. Both O. Botema and H. Eves express concerns that some of the proofs of the results related to the replacement of straight lines by curves do not correspond to the contemporary requirements for rigor.  To make this article self-contained and to remove the mentioned concerns we provide proofs of the facts needed for our considetations.


\begin{defi}
Let $p$ be a straight line and $\gamma$ a curve in $\mathbb{R}^2
$. Let the point $P_0$ belongs to both $p$ and $\gamma$. We say
that $p$ is a tangent to $\gamma \subset \mathbb{R}^2 $ at the
point $P_ 0$ if there exists an open circle $U$ in $\mathbb{R}^2$
centered at $P_0$ such that for a rectangular coordinate system
with  $p$ as abscissa axis, the set $\gamma \cap U$ is the graph
of a continuous real-valued function which is differentiable at
$P_0$ and its derivative at this point vanishes.

We say that the curve $\gamma$ is smooth if it has a tangent at every point.
\end{defi}

\smallskip

The curves $\gamma$ we mostly use in this article are parts of the boundaries  $\partial \,  \mathcal{C}$ of closed convex sets  $\mathcal{C} \subset \mathbb{R}^2$ with nonempty interior.
Given a point $O$ (inside or outside the set
$\mathcal C$),
a point $\tilde{A}(\varphi)$ of such a
curve is determined by its distance
$|O\tilde{A}(\varphi)|$ from the point $O$
and by the
angle $\varphi$ the ray
$\overrightarrow{O\tilde{A}(\varphi)}$ concludes with a suitably chosen direction.
Such curves are, therefore, represented by
a function $\tilde{\rho}(\varphi):= |O\tilde{A}(\varphi)|$
 where $\varphi$ runs over some interval.

\smallskip

Suppose a straight line $p$ is tangent at
some point $P_0$  to a curve $\gamma$ (given by some function
$\tilde{\rho}(\varphi):= |O\tilde{A}(\varphi)|$)
and $O$ is a point outside $p \,\cup \gamma$ .
Denote, as above, by $O'$ the orthogonal projection of $O$ to $p$.
Without loss of generality we may assume that the ray $\overrightarrow{OO'}$ is the ``suitably chosen direction'' and
$\varphi = \angle \tilde{A}(\varphi)OO'$
(Figure 4). Put $\varphi_0:= \angle P_0OO'$.

When the angle $\varphi$ is close enough to $\varphi_0$ the ray
$\overrightarrow{O\tilde{A}(\varphi)}$ intersects the line
$p$ at some point $A(\varphi)$.
Put $\rho(\varphi):=|OA(\varphi)|$. Clearly, $\tilde{\rho}(\varphi_0)= \rho(\varphi_0) = |OP_0|$.

\begin{figure}[h]  \label{f4}
\centering
\includegraphics[scale=0.6]{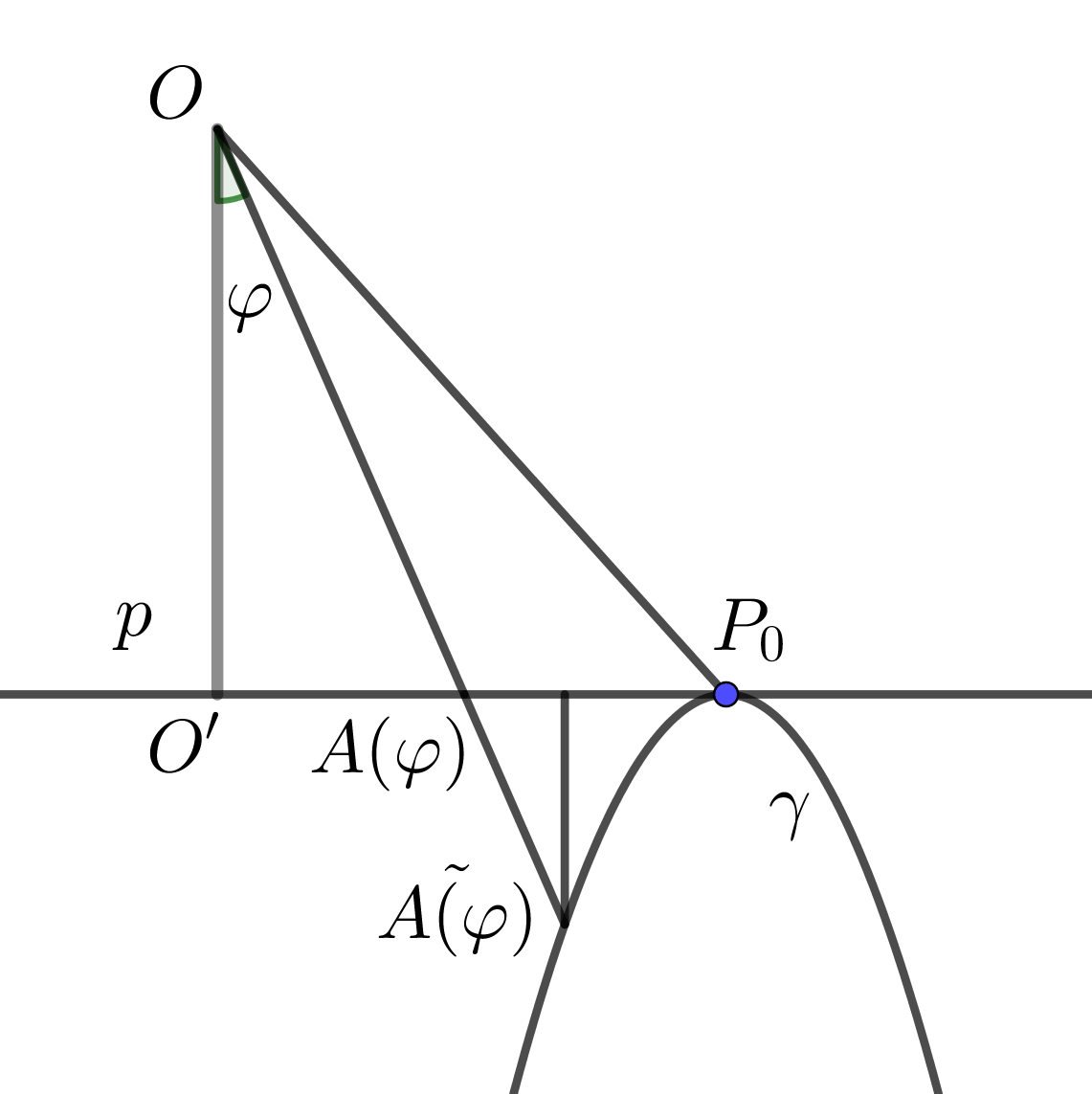}
\caption*{Figure 4}
\end{figure}

\begin{lemma} \label{linearization}
The functions $\tilde{\rho}(\varphi)$ and $\rho(\varphi)$ are
differentiable at $\varphi_0$ and
$\tilde{\rho}'(\varphi_0)=\rho'(\varphi_0)$.
\end{lemma}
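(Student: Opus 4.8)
The plan is to introduce the abscissa along the curve as a new parameter and to reduce the whole statement to the differentiability of the defining function of $\gamma$ at the single point $P_0$.

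First I would fix coordinates adapted to the tangent: take $p$ as the abscissa axis with $O'$ at the origin, so that $O=(0,h)$ with $h=|OO'|>0$, and let $x_0$ be the abscissa of $P_0$, so $P_0=(x_0,0)$ and $\tan\varphi_0=x_0/h$. By the Definition of tangent there is a neighborhood of $P_0$ in which $\gamma$ is the graph $y=g(x)$ of a continuous function $g$ that is differentiable at $x_0$ with $g(x_0)=0$ and $g'(x_0)=0$. The point $A(\varphi)$ lies on $p$, so Lemma~\ref{L1} gives at once $\rho(\varphi)=|OO'|\cos^{-1}\varphi=h\sec\varphi$; in particular $\rho$ is smooth, with $\rho'(\varphi_0)=h\sec\varphi_0\tan\varphi_0$. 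Thus the entire content of the lemma is the differentiability of $\tilde{\rho}$ at $\varphi_0$ together with the equality of the two derivatives.

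Next I would use $x$, the abscissa of the moving point on $\gamma$, as parameter. A point $(x,g(x))$ lies on the ray from $O$ at angle $\varphi$ precisely when
$$\tan\varphi=\frac{x}{h-g(x)},\qquad \tilde{\rho}=\sqrt{x^2+(h-g(x))^2}.$$
Because $\gamma$ is presented through the radial function $\tilde{\rho}(\varphi)$, the correspondence $\varphi\leftrightarrow x$ is a continuous bijection near $\varphi_0\leftrightarrow x_0$. Setting $\varphi(x)=\arctan\frac{x}{h-g(x)}$ and using $g(x_0)=g'(x_0)=0$, I would verify that $\varphi$ is differentiable at $x_0$ with $\varphi'(x_0)=h/(h^2+x_0^2)\neq 0$. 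Since $\varphi(\cdot)$ is continuous and injective on an interval around $x_0$ and has nonzero derivative there, the pointwise form of the inverse function theorem yields that $x(\varphi)$ is differentiable at $\varphi_0$ with $x'(\varphi_0)=(h^2+x_0^2)/h$.

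Finally, the function $R(x)=\sqrt{x^2+(h-g(x))^2}$ is differentiable at $x_0$ (again because $g$ is), with $R'(x_0)=x_0/\sqrt{x_0^2+h^2}$, so the pointwise chain rule shows that $\tilde{\rho}(\varphi)=R(x(\varphi))$ is differentiable at $\varphi_0$ and $\tilde{\rho}'(\varphi_0)=R'(x_0)\,x'(\varphi_0)$. A short computation reduces this to $h\sec\varphi_0\tan\varphi_0$ via $\tan\varphi_0=x_0/h$ and $\sec\varphi_0=\sqrt{h^2+x_0^2}/h$, which is exactly $\rho'(\varphi_0)$, giving the claimed equality. I expect the main obstacle to be precisely the low regularity hypothesis: $g$ is assumed differentiable only at the single point $x_0$, so the ordinary $C^1$ versions of the inverse function theorem and the chain rule do not apply and must be replaced by their one-point versions; the extra hypotheses those require, namely local injectivity and continuity of $\varphi(\cdot)$ and $\varphi'(x_0)\neq 0$, are supplied respectively by the assumption that $\gamma$ is given by $\tilde{\rho}(\varphi)$ and by the explicit value of $\varphi'(x_0)$.
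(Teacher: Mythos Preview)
Your argument is correct. The one-point inverse function and chain rules you invoke are valid under exactly the hypotheses you supply: $\varphi(x)=\arctan\frac{x}{h-g(x)}$ is continuous and, by the standing assumption that $\gamma$ is represented as $\tilde\rho(\varphi)$, injective on an interval around $x_0$; together with $\varphi'(x_0)=h/(h^2+x_0^2)\neq 0$ this gives differentiability of $x(\varphi)$ at $\varphi_0$, and the rest is routine.

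Your route, however, differs from the paper's. The paper does not reparametrize or appeal to any inverse function statement; instead it exploits the multiplicative identity $\tilde\rho(\varphi)=(1-h(t))\,\rho(\varphi)$ (with $|OO'|$ normalized to $1$ and $h$ the defining function of $\gamma$) and computes the difference quotient directly, reducing everything to showing $\dfrac{h(t)-h(p_0)}{\varphi-\varphi_0}\to 0$. That limit is obtained by writing $\tan\varphi=\dfrac{t}{1-h(t)}$ and factoring through the known limits of $\dfrac{\tan\varphi-\tan\varphi_0}{\varphi-\varphi_0}$ and $\dfrac{h(t)-h(p_0)}{t-p_0}$. The paper's computation is thus entirely elementary and self-contained, at the cost of a somewhat ad hoc algebraic identity; your approach is more structural and makes transparent why only differentiability at the single point $x_0$ is needed, at the cost of invoking (and justifying the hypotheses of) the one-point inverse function theorem. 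Both reach the same destination with comparable effort.
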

\begin{proof}
Consider the rectangular coordinate system with origin $O'$ and
abscissa axis $p$.  Since $p$ is tangent to $\gamma$ at $P_0$, we
may assume that in small open circle centered at $P_0$ the curve
$\gamma$ is the graph of a continuous  real-valued function
$h(t)$, defined in an open interval $\Delta \subset p$ containing
$P_0=(p_0,0)$ and such that $h(p_0)=h'(p_0)=0$.

Then $\tilde{A}(\varphi)=(t,h(t)), t\in \Delta$ and we may assume without restricting the generality that $|OO'|=1$.
Then
$\displaystyle\tilde{\rho}(\varphi)=(1-h(t))\rho(\varphi)$ and we
get

$$\frac{\tilde{\rho}(\varphi)-\tilde{\rho}(\varphi_0)}{\varphi-\varphi_0}=
(1-h(t))\cdot\frac{\rho(\varphi)-\rho(\varphi_0)}{\varphi-\varphi_0}
-\frac{h(t)-h(p_0)}{\varphi-\varphi_0}\cdot\rho(\varphi_0) .$$
Note that $\lim_{\varphi \rightarrow \varphi_0}h(t)=\lim_{t
\rightarrow p_0}h(t)= h(p_0)=0$ and we have to prove that
$$\lim_{\varphi \rightarrow
\varphi_0}\frac{h(t)-h(p_0)}{\varphi-\varphi_0}=0.$$ To do this
observe that $\displaystyle \tan(\varphi)=\frac{t}{1-h(t)}$ which
implies the identity

$$\frac{h(t)-h(p_0)}{\varphi-\varphi_0}=\frac{\tan(\varphi)-\tan(\varphi_0)}{\varphi-\varphi_0}
\cdot\displaystyle
\frac{\displaystyle\frac{h(t)-h(p_0)}{t-p_0}}{1+\tan(\varphi)\cdot\displaystyle\frac{h(t)-h(p_0)}{t-p_0}}.$$

Having in mind that $$\lim_{\varphi
\rightarrow\varphi_0}\frac{\tan(\varphi)-\tan(\varphi_0)}{\varphi-\varphi_0}=\cos^{-2}(\varphi_0)
$$ and   $$\lim_{t \rightarrow
p_0}\frac{h(t)-h(p_0)}{t-p_0}=h'(p_0)=0$$ we obtain from the above
identity that
$$\lim_{\varphi \rightarrow
\varphi_0}\frac{h(t)-h(p_0)}{\varphi-\varphi_0}=0$$ and we are
done.
\end{proof}

\medskip

It is known (cf.~\cite[Theorem 25.1]{Rock}) that a convex set has a tangent $p$ at some $P_0 \in \partial\, \mathcal{C}$ if and only if $p$ is the only supporting line for $\mathcal{C} $ at $P_0$.

We are now ready to prove item (iii) of Proposition
\ref{dim2prop}:

\begin{proof} (of item (iii) of Proposition \ref{dim2prop}) The unique line $t_1$  supporting $\mathcal C$ at $A^*$ is tangent to $\partial \, C$ at $A^*$.
Lemma \ref{linearization} implies that  the function $|O \tilde{A}(\varphi)|$ is differentiable at $\varphi_*.$
Using similar arguments, we see that the function $|O \tilde{B}(\varphi)|$ is also differentiable at $\varphi_*$. This proves the differentiability of the function
$$\tilde{f}(\varphi)=|\tilde{A}(\varphi)\tilde{B}(\varphi)| = |O \tilde{A}(\varphi)| +|O \tilde{B}(\varphi)|$$

As $\varphi_*$ is a local minimizer of $\tilde{f}(\varphi)$ we get
that $\tilde{f}'(\varphi_*) = 0.$

\end{proof}

We call a convex set $\mathcal C \subset \mathbb R^2$ {\it smooth} if there exists a
tangent at every point of its
boudary $\partial \, C$.

 \begin{prop} \label{max}
 Let $\mathcal D$ be a smooth compact convex subset of $\mathbb{R}^2$ and $O$ an interior point of it.
 As above, consider all straight lines $l(\varphi)$ through $O$ and the function
 $\tilde{f}(\varphi)=|\tilde{A}(\varphi)\tilde{B}(\varphi)|$ where $[\tilde{A}(\varphi)\tilde{B}(\varphi)]= l(\varphi) \cap \mathcal D$.
 Denote by $t_1(\varphi)$ and $t_2(\varphi)$ the tangents to $\partial \,\mathcal D$ at $\tilde{A}(\varphi)$ and $\tilde{B}(\varphi)$, respectively.Then:

 (i) The function $\tilde{f}(\varphi)$ is everywhere differentiable.

 (ii) $\tilde{f}'(\varphi)=0$ if and only if
  CPP holds for the lines $t_1(\varphi)$, $t_2(\varphi)$, $l(\varphi)$ and the points $\tilde{A}(\varphi)$, $\tilde{B}(\varphi)$, $O$.

\end{prop}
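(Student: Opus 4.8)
The plan is to linearize the boundary curve at each angle, thereby reducing both (i) and (ii) to the angle problem already settled in Propositions \ref{Rem1} and \ref{mainprop}, and then to extract the concurrence condition from a sign-robust formula for the derivative.

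\emph{Part (i).} Fix an arbitrary $\varphi$. Since $\mathcal D$ is smooth, $\partial\,\mathcal D$ has a unique tangent $t_1(\varphi)$ at $\tilde A(\varphi)$ and a unique tangent $t_2(\varphi)$ at $\tilde B(\varphi)$; these are supporting lines, and because $O$ is interior we have $O\notin t_1(\varphi)\cup t_2(\varphi)\cup\partial\,\mathcal D$, so the hypotheses of Lemma \ref{linearization} are met at $\varphi$ for each endpoint. That lemma then gives differentiability of $\psi\mapsto|O\tilde A(\psi)|$ and $\psi\mapsto|O\tilde B(\psi)|$ at $\psi=\varphi$. As $O\in[\tilde A(\varphi)\tilde B(\varphi)]$ we have $\tilde f(\varphi)=|O\tilde A(\varphi)|+|O\tilde B(\varphi)|$, so $\tilde f$ is differentiable at $\varphi$; since $\varphi$ was arbitrary, $\tilde f$ is everywhere differentiable. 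This is precisely the argument used for item (iii) of Proposition \ref{dim2prop}, now run at every $\varphi$ rather than only at a minimizer.

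\emph{Part (ii), reduction.} Fix $\varphi$ and freeze the tangents $t_1:=t_1(\varphi)$, $t_2:=t_2(\varphi)$. Let $A(\psi)=\overrightarrow{O\tilde A(\psi)}\cap t_1$ and $B(\psi)=\overrightarrow{O\tilde B(\psi)}\cap t_2$ be the points the rotating rays cut on these fixed lines, and put $f(\psi)=|OA(\psi)|+|OB(\psi)|$. Applying Lemma \ref{linearization} to each endpoint yields $\tilde f'(\varphi)=f'(\varphi)$. Because $t_1,t_2$ support $\mathcal D$ and $O$ is interior, $O$ is an interior point of the angle (or strip, when $t_1\parallel t_2$) they bound, so $f$ is exactly the strictly convex function of Proposition \ref{Rem1}; moreover at the freezing value the points cut on $t_1,t_2$ are $\tilde A(\varphi),\tilde B(\varphi)$ themselves. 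Thus it remains to characterize $f'(\varphi)=0$.

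\emph{Part (ii), conclusion.} Let $A'$ (resp.\ $B'$) be the intersection of the perpendicular to $l(\varphi)$ at $O$ with the perpendicular to $t_1$ at $\tilde A(\varphi)$ (resp.\ to $t_2$ at $\tilde B(\varphi)$); by definition, CPP for $t_1,t_2,l(\varphi)$ and the points $\tilde A(\varphi),\tilde B(\varphi),O$ is precisely the coincidence $A'=B'$. Differentiating $|OA|$ and $|OB|$ as in Lemma \ref{L1} and Proposition \ref{Rem1} shows that $f'(\varphi)=0$ is equivalent to $A'=B'$; hence $\tilde f'(\varphi)=0$ if and only if CPP holds. The parallel case is covered by Remark \ref{rem1}. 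Equivalently, one may invoke directly the equivalence (i)$\Leftrightarrow$(iii) of Proposition \ref{mainprop} for the frozen angle.

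\emph{Main obstacle.} The only delicate point is the sign bookkeeping in the last step: the formula of Proposition \ref{Rem1} reads $f'=\sign(\varphi)|OA'|-\sign(\theta-\varphi)|OB'|$, so $f'=0$ is equivalent to $|OA'|=|OB'|$ only when the two signs agree, and one must exclude the spurious solution where $|OA'|=|OB'|$ but $A',B'$ lie on opposite sides of $O$. The clean way to dispose of all cases at once is to work with the \emph{signed} height along the perpendicular to $l(\varphi)$ at $O$: placing $l(\varphi)$ on the abscissa axis and $O$ at the origin, a one-line computation gives $\frac{d}{d\psi}|OA|=y_{A'}$ and $\frac{d}{d\psi}|OB|=-y_{B'}$, where $y_{A'},y_{B'}$ are the signed ordinates of $A',B'$ (both points lying on the ordinate axis). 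Hence $\tilde f'(\varphi)=y_{A'}-y_{B'}$, which vanishes exactly when $A'=B'$, so the sign ambiguity is absorbed automatically.
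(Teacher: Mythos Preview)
Your proof is correct and follows essentially the same route as the paper: freeze the tangents at the given angle, use Lemma \ref{linearization} to identify $\tilde f'(\varphi)$ with the derivative of the linearized function $f$, and then invoke the equivalence (i)$\Leftrightarrow$(iii) of Proposition \ref{mainprop} (together with Remark \ref{rem1} for the parallel case). Your added ``main obstacle'' paragraph with the signed-ordinate computation is a welcome clarification of why $f'(\varphi)=0$ really forces $A'=B'$ without case-splitting, but it does not constitute a different approach.
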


\begin{proof}
Take an arbitrary $\varphi_0$  and put $A(\varphi):= l(\varphi)
\cap t_1(\varphi_0)$, $B(\varphi):= l(\varphi) \cap
t_2(\varphi_0)$. Consider the function
$f(\varphi):=|A(\varphi)B(\varphi)|=|OA(\varphi)| +|OB(\varphi)|$.
It is well defined and differentiable whenever $l(\varphi)$ is
parallel neither to $t_1(\varphi_0)$ nor to $t_2(\varphi_0)$. In
particular, this is so for $\varphi = \varphi_0$. By Lemma
\ref{linearization}, the function
 $\tilde{f}(\varphi) =|\tilde{A}(\varphi)\tilde{B}(\varphi)| = |\tilde{A}(\varphi)
 O| + |O \tilde{B}(\varphi)|$
 is differentiable at $\varphi_0$ (this proves the validity of (i) ), and its derivative coincides with the derivative of the function
 $f(\varphi)=|A(\varphi)B(\varphi)|$ at $\varphi_0$. Hence, the condition $\tilde{f}'(\varphi_0)=0$ is equivalent to the requirement ${f}'(\varphi_0)=0.$
  In view of Proposition \ref{mainprop}, this is precisely the case when CPP holds for the lines $t_1(\varphi_0)$, $t_2(\varphi_0)$, $l(\varphi_0)$
   and the points $\tilde{A}(\varphi_0) = A(\varphi_0)$, $\tilde{B}(\varphi_0)= B(\varphi_0), O$.
\end{proof}

\medskip

\begin{cor}
Let $\mathcal D$ and $\tilde{f}(\varphi)$ be as in Proposition
\ref{max}. Then CPP holds for every local maximizer $\varphi^*$
and every local minimizer $\varphi_*$ of this function.
\end{cor}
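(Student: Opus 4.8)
The plan is to derive the Corollary directly from the two conclusions of Proposition \ref{max}, combined with the elementary fact (Fermat's theorem) that a differentiable function has vanishing derivative at every interior local extremum. Essentially all the work has already been done; what remains is a short two-line deduction.

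First I would observe that the domain of $\tilde{f}$ has no boundary points to worry about. Since $O$ is an interior point of the compact convex body $\mathcal D$, every line $l(\varphi)$ through $O$ meets $\partial \,\mathcal D$ in exactly two points, so $\tilde{f}(\varphi)$ is finite for all $\varphi$; moreover $\varphi$ ranges over the full circle (equivalently, over any half-open interval of length $\pi$, since $l(\varphi)=l(\varphi+\pi)$), which has no endpoints. Consequently every local maximizer $\varphi^*$ and every local minimizer $\varphi_*$ is an \emph{interior} point of the domain. By part (i) of Proposition \ref{max}, $\tilde{f}$ is differentiable everywhere, so Fermat's theorem applies at any such interior local extremum, whether a maximum or a minimum, yielding $\tilde{f}'(\varphi^*)=0$ and $\tilde{f}'(\varphi_*)=0$.

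Finally I would invoke part (ii) of Proposition \ref{max}, which states that $\tilde{f}'(\varphi)=0$ holds if and only if CPP holds for the tangents $t_1(\varphi)$, $t_2(\varphi)$, the secant $l(\varphi)$, and the points $\tilde{A}(\varphi)$, $\tilde{B}(\varphi)$, $O$. Applying this equivalence at $\varphi=\varphi^*$ and at $\varphi=\varphi_*$ immediately gives the asserted CPP at both the local maximizers and the local minimizers.

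The main point to appreciate is that there is essentially no obstacle here: the substance resides entirely in Proposition \ref{max}, whose proof already secures differentiability (through Lemma \ref{linearization}) and the equivalence with CPP (through Proposition \ref{mainprop}). The only subtlety worth flagging is the role of the smoothness hypothesis on $\mathcal D$: it is precisely what makes differentiability (part (i)) hold \emph{uniformly} in $\varphi$, so that Fermat's theorem can be applied at local maxima as well as minima. For a non-smooth set $\tilde{f}$ may fail to be differentiable exactly at a local maximizer (as Proposition \ref{polygon} illustrates for polygons), which is why the present corollary must be stated for smooth $\mathcal D$.
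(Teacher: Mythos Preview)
Your proposal is correct and matches the paper's intended approach: the paper states the corollary without proof precisely because it is the immediate consequence of Proposition~\ref{max} via Fermat's theorem that you spell out. Your remark that the domain of $\tilde{f}$ has no boundary (so every local extremum is interior) and your closing observation on the role of smoothness are both accurate and well placed.
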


\begin{rem} {\rm  (i)  Pay attention to the seemingly strange phenomenon connected with the proof of Proposition \ref{max}: If $\varphi_0$ is a
local maximizer for the function $\tilde{f}(\varphi)$, it is also
a global minimizer for the closely related function $f(\varphi)$
which, in a sense, is a kind of ``twofold  linearization'' of
$\tilde{f}(\varphi)$ at $\varphi_0$
($|OA(\varphi)|$ is a linearization of $|O\tilde{A}(\varphi)|$  and
$|OB(\varphi)|$ is a linearization of
$|O\tilde{B}(\varphi)|$).

(ii) Items (i) and (ii) of Proposition \ref{max} remain valid also for unbounded smooth closed convex sets $\mathcal D$ with nonempty interiors
whenever  $\varphi$ satisfies  $\tilde{f}(\varphi) < +\infty.$}
\end{rem}

At the end of this subsection we prove a lemma that will be used in the next section.

\begin{lemma} \label{observe}
Let $\mathcal D$ and $\tilde{f}(\varphi)=|\tilde{A}(\varphi)\tilde{B}(\varphi)|$
be as in Proposition \ref{max}. Let $\varphi^*$ be a local maximizer for $\tilde{f}(\varphi)$
and let the tangent $t_1(\varphi^*)$ to $\partial \, \mathcal D$ at
$\tilde{A}(\varphi^*)$ be parallel to the tangent
$t_2(\varphi^*)$ to $\partial \, \mathcal D$ at
$\tilde{B}(\varphi^*)$. Then the line $\tilde{A}(\varphi^*)\tilde{B}(\varphi^*)$ is perpendicular to the tangents.
\end{lemma}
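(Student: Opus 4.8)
The plan is to exploit the ``twofold linearization'' already used in the proof of Proposition \ref{max}, specialized to the degenerate situation in which the two tangents at the endpoints are parallel — a case for which Remark \ref{rem1} describes the behavior completely.

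First I would record that, since $\mathcal D$ is smooth and $\varphi^*$ is a local maximizer of $\tilde f$, Proposition \ref{max}(i) makes $\tilde f$ differentiable at $\varphi^*$, so the first-order condition yields $\tilde f'(\varphi^*)=0$. Writing $t_1:=t_1(\varphi^*)$, $t_2:=t_2(\varphi^*)$ for the two parallel tangents and $A^*:=\tilde A(\varphi^*)$, $B^*:=\tilde B(\varphi^*)$, I would then, exactly as in the proof of Proposition \ref{max}, set $A(\varphi):=l(\varphi)\cap t_1$, $B(\varphi):=l(\varphi)\cap t_2$ and $f(\varphi):=|A(\varphi)B(\varphi)|=|OA(\varphi)|+|OB(\varphi)|$. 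These are well defined for $\varphi$ near $\varphi^*$ because $l(\varphi^*)$ meets the distinct parallel lines $t_1,t_2$ at the distinct points $A^*,B^*$ and is therefore transversal to both. By Lemma \ref{linearization} the function $f$ is differentiable at $\varphi^*$ with $f'(\varphi^*)=\tilde f'(\varphi^*)=0$.

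Next I would observe that $A(\varphi)B(\varphi)$ is precisely the chord cut through $O$ from the strip bounded by the parallel lines $t_1,t_2$, and that $O$ lies strictly inside this strip: indeed $O$ is interior to $\mathcal D$, which is contained in the closed strip, while each supporting line $t_i$ meets $\mathcal D$ only along $\partial\,\mathcal D$, so $O\notin t_1\cup t_2$. Remark \ref{rem1} then applies: in the strip case $f$ is strictly convex and has a unique critical point, which is its global minimizer and is characterized by $A(\varphi)B(\varphi)$ being perpendicular to $t_1$ and $t_2$. Since $f'(\varphi^*)=0$, the angle $\varphi^*$ must be this unique critical point, whence $l(\varphi^*)=A^*B^*=\tilde A(\varphi^*)\tilde B(\varphi^*)$ is perpendicular to both tangents, which is the assertion.

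The one point demanding care is the transfer $\tilde f'(\varphi^*)=f'(\varphi^*)=0$ combined with the recognition that, although $\varphi^*$ is a local \emph{maximizer} of $\tilde f$, it is a global \emph{minimizer} of the linearized strip-chord function $f$. This is exactly the ``seemingly strange phenomenon'' flagged in the remark after Proposition \ref{max}, and it is what makes Remark \ref{rem1} the correct tool here rather than any maximization argument; once this is in place, the conclusion is a direct citation of Lemma \ref{linearization} and Remark \ref{rem1}.
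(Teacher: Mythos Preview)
Your proof is correct and follows essentially the same approach as the paper: linearize at $\varphi^*$ by replacing $\partial\,\mathcal D$ with the two parallel tangents, note that the strip-chord function $f$ is strictly convex with unique critical point given by the perpendicular chord, transfer $\tilde f'(\varphi^*)=0$ to $f'(\varphi^*)=0$ via Lemma~\ref{linearization}, and conclude. Your version is slightly more explicit (you justify that $O$ lies strictly inside the strip and invoke Remark~\ref{rem1} by name), and you front-load the vanishing of $\tilde f'(\varphi^*)$ by citing Proposition~\ref{max}(i) rather than deriving it directly from Lemma~\ref{linearization} as the paper does, but these are cosmetic differences only.
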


\begin{proof}
As in the proof of Proposition \ref{max}, define
$A(\varphi):= l(\varphi)
\cap t_1(\varphi^*)$ and $B(\varphi):= l(\varphi) \cap t_2(\varphi^*)$. We know that the function
$f(\varphi):=|A(\varphi)B(\varphi)|=|OA(\varphi)| +|OB(\varphi)|$ is strictly convex and differentiable. It attains its global minimum
at the point where its derivative is zero and this happens precisely when $l(\varphi)$ is perpendicular to the lines $t_1(\varphi^*)$ and
$t_1(\varphi^*)$. Note that the function
 $\tilde{f}(\varphi) =|\tilde{A}(\varphi)\tilde{B}(\varphi)| = |\tilde{A}(\varphi)
 O| + |O \tilde{B}(\varphi)|$
 is differentiable at $\varphi^*$ and its derivative at this point coincides with the derivative of the function
 $f(\varphi)=|A(\varphi)B(\varphi)|$ at the same point. As $\varphi^*$ is a local maximizer for $\tilde{f}(\varphi)$ we get
$f'(\varphi^*) = \tilde{f}'(\varphi^*) = 0$.
 \end{proof}

 As seen from its proof, this lemma is also valide for unbounded smooth closed convex sets $\mathcal C$ with nonempty interior.

\bigskip

\subsection{Point $O$ is outside the set $\mathcal C$}

We consider first the case when $\mathcal C$ is an angle $\angle
Ol_1l_2$ with $0<\theta :=\angle Ol_1l_2 <\pi/2$,
and
$O$ is outside the angle and its opposite angle. This case is
presented in Figure 5 where $O'$ and $O''$ are the orthogonal
projections of $O$ onto the lines containing the arms $l_1$ and
$l_2$, respectively. A line $l(\varphi)$ (the angle $\varphi$ will
be specified next) passes through $O$ and intersects the arms of
the angle at the points $A=A(\varphi)$ and $B=B(\varphi)$ ($B \in
[OA]$). We put  $\varphi:= \angle AOO'$ if $O'$ belongs to the ray
$\overrightarrow{AE}$ and $\varphi:= -\angle AOO'$ otherwise.

\begin{figure}[h]  \label{f5}
\centering
\includegraphics[scale=0.3]{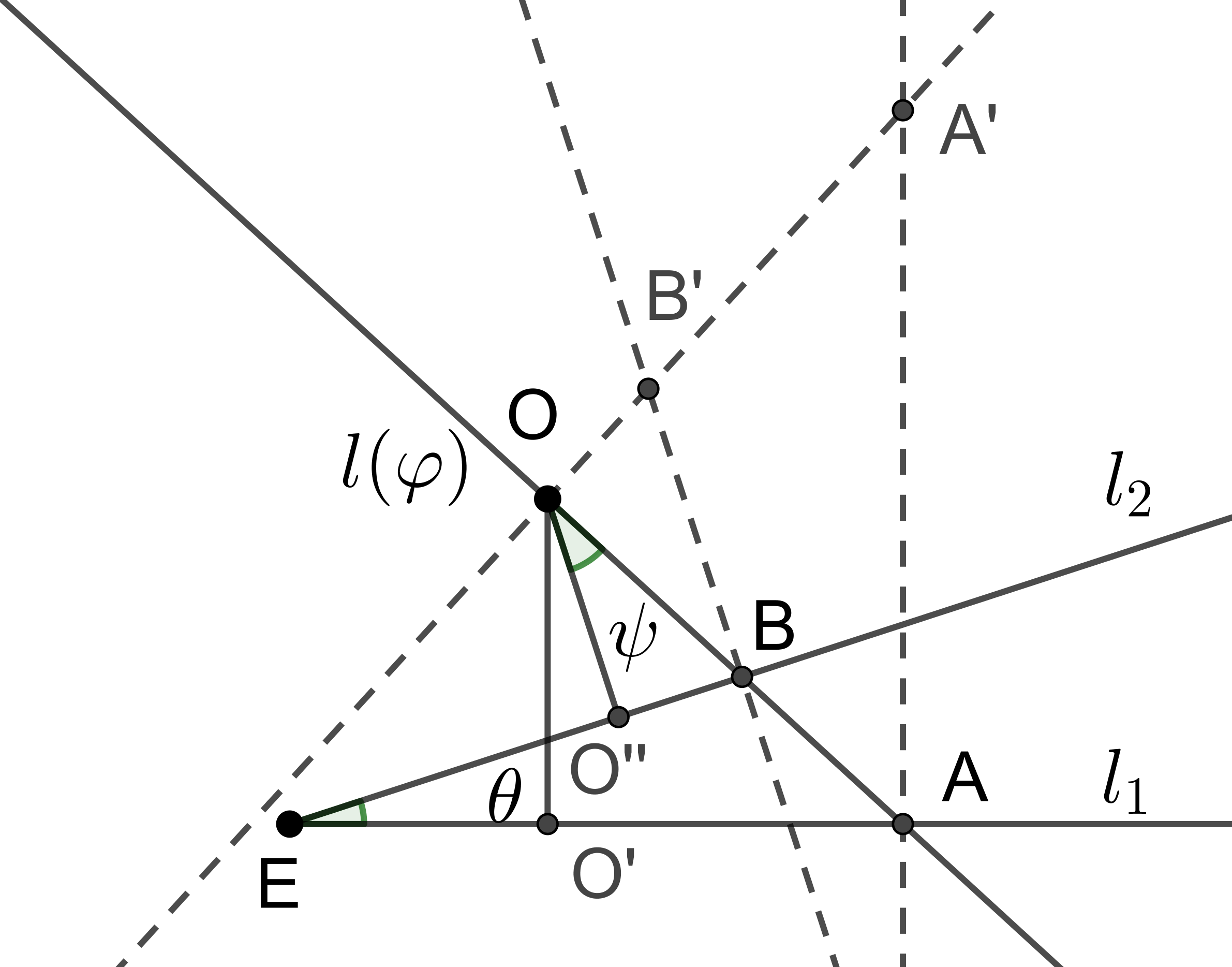}
\caption*{Figure 5}
\end{figure}

We are interested in the behavior of the function
$$g(\varphi):= |OA(\varphi)| - |OB(\varphi)| = |A(\varphi)B(\varphi)|$$
which is well-defined and differentiable in its domain (the open interval \\
$(-\pi/2 + \angle OEO', \pi /2)$).

As mentioned in the Introduction,  Neovius \cite{EN} has shown
that depending on the angle measure of the angle $\angle OEO'$ the
function $g'(\varphi)$ may have $0, 1,$ or $2$ critical points  in
the interval $(-\pi/2 + \angle OEO', 0)$. In the third case, the two critical points correspond to a local maximum and a local minimum of $g(\varphi)$.

\begin{prop} \label{CPP O outside}
(CPP for angles.) Let $\varphi$ be from the domain of the function
$g$. Then $g'(\varphi)=0$ if and only if CPP holds for the line
$l(\varphi)$, the arms $l_1$, and $ l_2$ and the points $O$,
$A(\varphi)$, and  $B(\varphi)$ , respectively.

In particular, CPP holds for some $\varphi$ whenever the function
$g$ has a local minimum or a local maximum at $\varphi$.
\end{prop}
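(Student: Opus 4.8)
The plan is to mirror the structure of Proposition \ref{mainprop}, computing the derivative $g'(\varphi)$ in terms of the auxiliary points $A'$ and $B'$ introduced for the CPP configuration, and then to show that the vanishing of this derivative is exactly the algebraic encoding of the three perpendiculars being concurrent. First I would set up the CPP picture explicitly: let $A'$ be the intersection of the perpendicular to $l(\varphi)$ at $O$ with the perpendicular to $l_1$ at $A(\varphi)$, and let $B'$ be the intersection of that same perpendicular to $l(\varphi)$ at $O$ with the perpendicular to $l_2$ at $B(\varphi)$. By construction both $A'$ and $B'$ lie on the single line through $O$ perpendicular to $l(\varphi)$, so CPP holds precisely when $A' = B'$. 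This reduces the whole proposition to the identity $g'(\varphi) = 0 \iff A' = B'$.

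Next I would compute the two derivatives separately using Lemma \ref{L1}. The key observation is that the two radii $|OA(\varphi)|$ and $|OB(\varphi)|$ are each governed by Lemma \ref{L1}(a), applied to the lines $l_1$ and $l_2$ with their respective projections $O'$ and $O''$. Since $B \in [OA]$ here (the point $O$ is outside the angle, so both feet $A$ and $B$ lie on the same side along the ray), the angular parameter for the line $l_2$ is again an affine function of $\varphi$ — I would verify, exactly as in the computation $\psi = \theta - \varphi$ preceding Proposition \ref{Rem1}, the correct relation between the angle subtended at $O$ by $B$ and the angle $\varphi$, being careful about signs since $O$ now sits outside the angle. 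Applying Lemma \ref{L1}(a) twice gives $|OA|'_\varphi = \pm|OA'|$ and $|OB|'_\varphi = \pm|OB'|$ with signs determined by the $\sign$ of the relevant angles, so that
$$g'(\varphi) = |OA|'_\varphi - |OB|'_\varphi = \varepsilon_A |OA'| - \varepsilon_B|OB'|$$
for appropriate signs $\varepsilon_A, \varepsilon_B \in \{-1, +1\}$. I would then argue, using the geometry of Figure 5 and the fact that $A'$ and $B'$ lie on the common perpendicular line through $O$ on the same side of $O$, that $g'(\varphi) = 0$ forces $|OA'| = |OB'|$ together with $A', B'$ on the same ray from $O$, hence $A' = B'$, which is CPP; and conversely $A' = B'$ gives $g'(\varphi) = 0$.

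The main obstacle will be bookkeeping of the signs $\varepsilon_A, \varepsilon_B$ and confirming that the two auxiliary points genuinely land on the same side of $O$ along the common perpendicular, rather than on opposite sides (in which case $|OA'| = |OB'|$ would describe a reflection, not coincidence). This is where the hypothesis that $O$ lies outside both the angle and its vertical angle, together with the restriction $0 < \theta < \pi/2$, does the real work: it pins down the configuration so that $A(\varphi)$ and $B(\varphi)$ are ordered as $B \in [OA]$ and the signs align, making $g'(\varphi) = 0 \iff A' = B'$ a genuine equivalence. I expect the computation of $g'$ itself to be a routine application of Lemma \ref{L1}, and the geometric identification of ``$|OA'| = |OB'|$ with matching orientation'' as ``$A' = B'$'' to be the only step requiring care.

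For the final sentence of the proposition, the \emph{in particular} clause follows immediately: at any interior local extremum of the differentiable function $g$ on its open interval of definition we have $g'(\varphi) = 0$, and hence CPP holds there by the equivalence just established. Since $g$ is differentiable throughout its domain, local minima and maxima are automatically critical points, so no additional argument about one-sided derivatives or non-smoothness is needed here — the subtler phenomena seen in the polygon case of Proposition \ref{polygon} do not arise because the arms $l_1, l_2$ are already straight lines.
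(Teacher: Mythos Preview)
Your approach is essentially the paper's, and your identification of the derivative formula and of CPP as ``$A'=B'$ on the common perpendicular through $O$'' is exactly right. The one claim that is wrong is your assertion that the hypotheses on $O$ and $\theta$ force $A'$ and $B'$ to lie on the \emph{same} side of $O$ throughout the domain. They do not: with $\psi=\varphi-\theta$ one gets
\[
g'(\varphi)=\sign(\varphi)\,|OA'|-\sign(\varphi-\theta)\,|OB'|,
\]
so for $0<\varphi<\theta$ the signs are opposite, giving $g'(\varphi)=|OA'|+|OB'|>0$ while $A'$ and $B'$ sit on opposite rays from $O$ (hence $A'\neq B'$). The paper handles this by a short case split: for $\varphi\ge 0$ it shows $g'(\varphi)>0$ (using $|OA|>|OB|$ and a tangent comparison when $\varphi>\theta$), so neither side of the biconditional can hold; only for $\varphi<0$ are $A',B'$ on a common ray and the equivalence $g'(\varphi)=0\iff A'=B'$ read off directly. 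Once you replace ``the signs align everywhere'' by this three-region analysis, your outline becomes the paper's proof.
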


\begin{proof}
The dashed lines in Figure 5 are orthogonal to: $l_1$ at $A=A(\varphi)$, $l_2$ at $B=B(\varphi)$, and $l(\varphi)$ at $O$. Point $A'=A'(\varphi)$ is the intersection of the dashed line orthogonal to $l(\varphi)$ and the dashed line orthogonal to $l_1$. Similarly, point $B'= B'(\varphi)$ is the intersection point of the dashed lines orthogonal to $l(\varphi)$ and $l_2$.

Consider the angle $\psi = \varphi - \theta$ concluded between
$l(\varphi)$ and the line $OO''$. Clearly, $|OB|=
|OO''|\cos^{-1}\psi$. Hence, by Lemma \ref{L1},
$$g'(\varphi)= \sign(\varphi)|OA'| - \sign(\varphi - \theta)|OB'|.$$
Note that $\theta = \angle O''OO'$. If $0\leq \varphi \leq \theta$, then $g'(\varphi) =|OA'| + |OB'|> 0$. If $\varphi > \theta$ we have
$$g'(\varphi) =|OA'| - |OB'| = |OA|\tan \varphi - |OB|\tan(\varphi - \theta)>|OB|(\tan \varphi - \tan(\varphi - \theta)) >0.$$

This shows that $g(\varphi)$ increases strictly when $\varphi >0$. For $\varphi$ from the interval $(-\pi/2 + \angle OEO, 0)$ we have
$$g'(\varphi) = - |OA'| +|OB'|.$$
Note also that in this case the segment $[A'B']$ does not contain the point $O$. This means $g'(\varphi)=0$ if and only if $A'=B'$ which is the CPP.
\end{proof}

\begin{rem} \label{ostar}
{\rm
 It is easy to realize that, when $\pi/2 \leq \theta < \pi$, the behavior of the function $g(\varphi)$ is not interesting. It is strictly increasing in its domain; therefore, $g'(\varphi)$ does not have zeros. }
\end{rem}

Let now $\mathcal C$ be a closed convex subset (not necessarily bounded) of $\mathbb{R}^2$ with a
 nonempty interior and $O$ be a point outside  $\mathcal{C}$. Consider all
  straight lines $l(\varphi)$ through $O$ and put  $D :=\{\varphi: l(\varphi) \cap \mathcal C  = [\tilde{A}(\varphi)\tilde{B}(\varphi)] \not = \emptyset$ and $\tilde{B}(\varphi) \in [O\tilde{A}(\varphi)] \}$.
  The set $D$ is the domain of the function
 $\tilde{g}(\varphi):= |O \tilde{A}(\varphi)| - |O\tilde{B}(\varphi)| = |\tilde{A}(\varphi) \tilde{B}(\varphi)|.$
The next two propositions (and their proofs) are similar to Proposition \ref{dim2prop} and Proposition \ref{max}.

 \begin{prop} \label{tilde O outside}
 If the function $\tilde{g}(\varphi)$ has a local minimum at some $\varphi_*$ from
the interior of its domain $D$ and $l(\varphi_*) \cap \mathcal{C} = [A^*B^*], B^* \in [OA^*]$, then

(i) $\mathcal{C}$ has
unique supporting line $t_1$ at $A^*$ and unique supporting line $t_2$ at $ B^*$. I.e., $t_1$ is a tangent to $\partial\, C$ at $A^*$ and $t_2$ is a tangent to $\partial \, C$ at $B^*$.

(ii) CPP holds for the lines $t_1, t_2, l(\varphi_*)$  and the
points $A^*$, $B^*$ and $O$.
\end{prop}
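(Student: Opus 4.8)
The plan is to follow the blueprint of Proposition \ref{dim2prop}, replacing the strictly convex function $f$ used there by the function $g$ of Proposition \ref{CPP O outside}, while paying attention to the fact that, because $O$ now lies outside $\mathcal C$, the two comparisons for the near and far endpoints point in opposite directions. First I would fix an arbitrary supporting line $t_1$ for $\mathcal C$ at $A^*$ and an arbitrary supporting line $t_2$ at $B^*$, and set $A(\varphi):= l(\varphi)\cap t_1$, $B(\varphi):= l(\varphi)\cap t_2$, and $g(\varphi):=|OA(\varphi)|-|OB(\varphi)|$. Since $A^*,B^*$ are distinct points of the chord $[A^*B^*]$ lying on $l(\varphi_*)$, neither $t_1$ nor $t_2$ can coincide with $l(\varphi_*)$ (two distinct common points would force equality of the lines); hence neither is parallel to it, so $A(\varphi),B(\varphi)$ and $g$ are well defined and differentiable near $\varphi_*$, with $A(\varphi_*)=A^*$ and $B(\varphi_*)=B^*$.

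The key geometric step is the comparison $\tilde g\le g$ near $\varphi_*$, with equality at $\varphi_*$. Writing $H_1,H_2$ for the closed half-planes bounded by $t_1,t_2$ that contain $\mathcal C$, I would observe, using $B^*\in[OA^*]$, that $O\in H_1$ but $O\notin H_2$: along $l(\varphi_*)$ the order of points is $O,B^*,A^*$, the point $B^*$ lies in the interior of $H_1$ (it is not on $t_1$, as $t_1\ne l(\varphi_*)$) while $A^*\in t_1$, whence $O$ lies on the $B^*$-side, i.e.\ in $H_1$; symmetrically $A^*$ lies in the interior of $H_2$ while $B^*\in t_2$, whence $O$ lies on the opposite side of $t_2$, i.e.\ outside $H_2$. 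Consequently, along each nearby ray $\overrightarrow{O\tilde A(\varphi)}$ the exit point $\tilde A(\varphi)\in\mathcal C\subseteq H_1$ is reached at or before the crossing of $t_1$, giving $|O\tilde A(\varphi)|\le|OA(\varphi)|$, whereas the entry point $\tilde B(\varphi)\in\mathcal C\subseteq H_2$ is reached at or after the crossing of $t_2$, giving $|O\tilde B(\varphi)|\ge|OB(\varphi)|$. Subtracting yields $\tilde g(\varphi)\le g(\varphi)$, and $\tilde g(\varphi_*)=g(\varphi_*)$ since $\tilde A(\varphi_*)=A^*=A(\varphi_*)$ and $\tilde B(\varphi_*)=B^*=B(\varphi_*)$.

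Because $\varphi_*$ is an interior local minimizer of $\tilde g$, the chain $g(\varphi)\ge\tilde g(\varphi)\ge\tilde g(\varphi_*)=g(\varphi_*)$ shows that $\varphi_*$ is also a local minimizer of the differentiable function $g$, hence $g'(\varphi_*)=0$. By Proposition \ref{CPP O outside} this is exactly CPP for the lines $t_1,t_2,l(\varphi_*)$ and the points $A^*,B^*,O$; and since $t_1,t_2$ were arbitrary supporting lines, CPP holds for every such pair. Uniqueness (item (i)) then follows exactly as in Proposition \ref{dim2prop}(ii): fixing $t_1$, the perpendicular to $t_1$ at $A^*$ and the perpendicular to $l(\varphi_*)$ at $O$ are not parallel (as $t_1\ne l(\varphi_*)$) and meet in a single point ${A^*}'$; CPP forces the perpendicular at $B^*$ to every supporting line $t_2$ to pass through ${A^*}'$, so all such $t_2$ are perpendicular to ${A^*}'B^*$ at $B^*$ and therefore coincide. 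The symmetric argument gives uniqueness at $A^*$, and then item (ii) is the already-established CPP applied to the now-unique tangents.

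I expect the main obstacle to be the geometric comparison $\tilde g\le g$: one must locate $O$ correctly relative to $H_1$ and $H_2$ and get the near- and far-endpoint inequalities to point in opposite directions, which is precisely where the exteriority of $O$ enters and where this proof departs from the interior case of Proposition \ref{dim2prop}. A secondary point to dispatch is the degenerate configuration $t_1\parallel t_2$, in which the ``angle'' of Proposition \ref{CPP O outside} is not available; there CPP reduces to the requirement that $A^*B^*$ be perpendicular to both tangents, exactly as in the strip situation of Remark \ref{rem1}.
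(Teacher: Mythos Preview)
Your proof is correct and follows the same route as the paper: show that $\varphi_*$ is also a local minimizer of the linearized function $g$ (the paper does this via the one-line containment $[\tilde A(\varphi)\tilde B(\varphi)]\subseteq[A(\varphi)B(\varphi)]$, you via the two separate endpoint inequalities coming from $O\in H_1$, $O\notin H_2$), invoke Proposition~\ref{CPP O outside} to obtain CPP for every pair of supporting lines, and then deduce uniqueness exactly as in Proposition~\ref{dim2prop}(ii). Your explicit treatment of the half-plane positions of $O$ and of the degenerate case $t_1\parallel t_2$ goes slightly beyond what the paper writes out, but the strategy is identical.
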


\begin{proof}
Let  $l_1$ and $l_2$ be any supporting lines for $\mathcal C$ at
the points $A^*$ and $B^*$ , respectively. For $\varphi \in D$ put
$A(\varphi):=l(\varphi) \cap l_1$
 and
$B(\varphi):= l(\varphi) \cap l_2$. In particular, ${A}(\varphi_*) = A^*$ and ${B}(\varphi_*) = B^*$. Also,
we have $[\tilde{A}(\varphi)\tilde{B}(\varphi)]= [A(\varphi)B(\varphi)] \cap \mathcal C$ and, therefore, $ |A(\varphi)B(\varphi)| \geq |\tilde{A}(\varphi) \tilde{B}(\varphi)|  $.
If $\varphi$ is close enough to $\varphi_*$ we have

$$ |A(\varphi)B(\varphi)| \geq |\tilde{A}(\varphi) \tilde{B}(\varphi)| \geq |A^*B^*|= |A(\varphi_*)B(\varphi_*)|.$$

\noindent I.e., $\varphi_*$ is a local minimum for the function $
g(\varphi)=|{A}(\varphi) {B}(\varphi)|$ from Proposition \ref{CPP
O outside}. It follows that CPP holds for the lines $l_1, l_2,
l(\varphi_*)$  and the points $A^*$, $B^*$ and $O$.  This implies,
precisely as in the proof of Proposition \ref{dim2prop}, that
$\mathcal C$ has unique supporting lines at $A^*$ and $B^*$.
\end{proof}

Let $\mathcal D$ be a smooth closed convex subset of
$\mathbb{R}^2$ with nonempty interior and $O$ a point outside it.
Consider as above  all straight lines $l(\varphi)$ through $O$ and
put  $D :=\{\varphi: l(\varphi) \cap \mathcal D  =
[\tilde{A}(\varphi)\tilde{B}(\varphi)] \not = \emptyset$ and
$\tilde{B}(\varphi) \in [O\tilde{A}(\varphi)] \}$.
  The set $D$ is the domain of the function
 $\tilde{g}(\varphi):= |O \tilde{A}(\varphi)| - |O\tilde{B}(\varphi)| = |\tilde{A}(\varphi) \tilde{B}(\varphi)|.$
 For every $\varphi \in D$  denote by $t_1(\varphi)$ and $t_2(\varphi)$
 the the tangents to $\partial \,\mathcal D$ at $\tilde{A}(\varphi)$ and $\tilde{B}(\varphi)$, respectively

\begin{prop} \label{max O outside}
The following statements have place for every $\varphi$ from the interior of $D$:

 (i) The function $\tilde{g}(\varphi)$ is differentiable at $\varphi$.

 (ii) $\tilde{g}'(\varphi)=0$ if and only if
  CPP holds for the lines $t_1(\varphi)$, $t_2(\varphi)$, $l(\varphi)$, and the points $\tilde{A}(\varphi), \tilde{B}(\varphi), O$, respectively.

  In particular, CPP holds for every local minimizer $\varphi_*$ and every local maximizer $\varphi^*$ of the function $\tilde{g}$
(whenever $\varphi_*$ and $\varphi^*$ are from the interior of the domain $D$ of $\tilde{g}$).

\end{prop}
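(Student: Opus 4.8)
The plan is to mirror the proof of Proposition \ref{max}, replacing the ``inside'' angle function of Proposition \ref{mainprop} by the ``outside'' angle function $g$ of Proposition \ref{CPP O outside}. Fix an arbitrary $\varphi_0$ in the interior of $D$, so that $l(\varphi_0)$ cuts $\mathcal D$ in a genuine segment $[\tilde A(\varphi_0)\tilde B(\varphi_0)]$ with $\tilde B(\varphi_0)\in[O\tilde A(\varphi_0)]$. Replace the boundary curve by its two tangent lines at the endpoints: set $A(\varphi):=l(\varphi)\cap t_1(\varphi_0)$ and $B(\varphi):=l(\varphi)\cap t_2(\varphi_0)$, and define $g(\varphi):=|OA(\varphi)|-|OB(\varphi)|=|A(\varphi)B(\varphi)|$. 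Since $l(\varphi_0)$ is a secant of $\mathcal D$, it cannot coincide with the supporting line $t_1(\varphi_0)$; sharing with it the point $\tilde A(\varphi_0)$, it is then not parallel to it either, and likewise for $t_2(\varphi_0)$. Hence $A(\varphi),B(\varphi)$ are well defined for $\varphi$ near $\varphi_0$ and $g$ is precisely the angle function of Proposition \ref{CPP O outside} associated with the angle cut out by $t_1(\varphi_0)$ and $t_2(\varphi_0)$; in particular $g$ is differentiable at $\varphi_0$.

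The heart of the argument is the ``twofold linearization''. Recalling (\cite{Rock}) that the unique supporting line $t_1(\varphi_0)$ is the tangent to $\partial\,\mathcal D$ at $\tilde A(\varphi_0)=A(\varphi_0)$, Lemma \ref{linearization} gives that $|O\tilde A(\varphi)|$ is differentiable at $\varphi_0$ with the same derivative as $|OA(\varphi)|$; the identical argument applied at $\tilde B(\varphi_0)$ handles $|O\tilde B(\varphi)|$. Subtracting, $\tilde g(\varphi)=|O\tilde A(\varphi)|-|O\tilde B(\varphi)|$ is differentiable at $\varphi_0$ --- which proves (i) --- and $\tilde g'(\varphi_0)=g'(\varphi_0)$. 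Consequently $\tilde g'(\varphi_0)=0$ if and only if $g'(\varphi_0)=0$, and by Proposition \ref{CPP O outside} the latter holds exactly when CPP is valid for $t_1(\varphi_0),t_2(\varphi_0),l(\varphi_0)$ and the points $A(\varphi_0)=\tilde A(\varphi_0)$, $B(\varphi_0)=\tilde B(\varphi_0)$, $O$. This is precisely (ii). The ``In particular'' clause is then immediate: at any local minimizer or maximizer $\varphi$ in the interior of $D$, the differentiability from (i) forces $\tilde g'(\varphi)=0$, whence CPP follows from (ii).

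The step needing the most care is verifying that Proposition \ref{CPP O outside} genuinely applies to the angle formed by $t_1(\varphi_0)$ and $t_2(\varphi_0)$ together with $O$. One checks the orientation: since $\mathcal D$ is convex and lies on one side of each tangent while $\tilde B(\varphi_0)$ separates $O$ from $\tilde A(\varphi_0)$ along $l(\varphi_0)$, the point $O$ lies on the $\mathcal D$-side of $t_1(\varphi_0)$ but on the opposite side of $t_2(\varphi_0)$; hence $O$ falls in one of the two lateral wedges, i.e.\ outside the tangent angle containing $\mathcal D$ and its vertical angle, which is exactly the configuration of Proposition \ref{CPP O outside}, and the one in which $O\notin[A'B']$ so that $g'(\varphi_0)=0\Leftrightarrow A'=B'$. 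Two degenerate possibilities must be disposed of separately: if $t_1(\varphi_0)\parallel t_2(\varphi_0)$ one invokes the strip version (Remark \ref{rem1}), and if the tangent angle is at least $\pi/2$ then, by Remark \ref{ostar}, $g'>0$, so neither side of the equivalence in (ii) can hold and it is trivially true (this obtuse case cannot occur at an extremizer, since there $g'(\varphi_0)=0$). In every case the equivalence between $\tilde g'(\varphi_0)=0$ and CPP persists.
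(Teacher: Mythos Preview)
Your proof is correct and follows essentially the same approach as the paper's: fix $\varphi_0$, linearize via the two tangent lines to obtain the angle function $g$, invoke Lemma \ref{linearization} to match derivatives, and appeal to Proposition \ref{CPP O outside} for the equivalence with CPP. Your version is in fact more thorough than the paper's, which is quite terse: you explicitly verify that $l(\varphi_0)$ is transversal to each tangent, you justify the lateral-wedge configuration needed to apply Proposition \ref{CPP O outside}, and you dispose of the parallel-tangent and obtuse-angle degeneracies (the paper's proof silently assumes the acute, non-parallel situation).
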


\begin{proof}
Take an arbitrary $\varphi_0$  from the interior of $D$ and put
$A(\varphi):= l(\varphi) \cap t_1(\varphi_0)$, $B(\varphi):=
l(\varphi) \cap t_2(\varphi_0)$. Consider the function
$$g(\varphi):=|A(\varphi)B(\varphi)|=|OA(\varphi)| - |OB(\varphi)|.$$
It is well defined and differentiable at $\varphi = \varphi_0$. By Lemma \ref{linearization}, the function
 $\tilde{g}(\varphi) =|\tilde{A}(\varphi)\tilde{B}(\varphi)| = |\tilde{A}(\varphi)
 O| - |O \tilde{B}(\varphi)|$
 is differentiable at $\varphi_0$ (this proves the validity of (i)), and its derivative coincides with the derivative of the function
 $g(\varphi)=|A(\varphi)B(\varphi)|$ at $\varphi_0$. Hence, the condition $\tilde{g}'(\varphi_0)=0$ is equivalent to the requirement ${g}'(\varphi_0)=0.$   In vew of Proposition \ref{CPP O outside}, this is precisely the case when CPP holds for the lines $t_1(\varphi_0)$, $t_2(\varphi_0)$, $l(\varphi_0)$, and the points $\tilde{A}(\varphi_0) = A(\varphi_0)$, $\tilde{B}(\varphi_0)= B(\varphi_0)$, and $O$.
\end{proof}
An analog of Lemma \ref{observe} is valid in the case when $O$ is outside the set $\mathcal D$.

\begin{lemma} \label{observe1}
Let $\mathcal D$ and $\tilde{g}(\varphi)=|\tilde{A}(\varphi)\tilde{B}(\varphi)|$
be as in Proposition \ref{max O outside}.
Let $\varphi^*$ from the interior of the domain of $\tilde{g}$ be a local maximizer
for $\tilde{g}(\varphi)$ Suppose
the tangent $t_1(\varphi^*)$ to $\partial \, \mathcal D$ at
$\tilde{A}(\varphi^*)$ is parallel to the tangent
$t_2(\varphi^*)$ to $\partial \, \mathcal D$ at
$\tilde{B}(\varphi^*)$. Then the line $\tilde{A}(\varphi^*)\tilde{B}(\varphi^*)$ is perpendicular to the tangents.
\end{lemma}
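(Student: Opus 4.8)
The plan is to imitate the proof of Lemma~\ref{observe}, replacing $\partial\,\mathcal D$ near the two endpoints by the parallel tangent lines $t_1(\varphi^*)$ and $t_2(\varphi^*)$ and analysing the resulting linearized function. I would first abbreviate $A^*:=\tilde A(\varphi^*)$, $B^*:=\tilde B(\varphi^*)$ and set $A(\varphi):=l(\varphi)\cap t_1(\varphi^*)$ and $B(\varphi):=l(\varphi)\cap t_2(\varphi^*)$, which are well defined and distinct for $\varphi$ near $\varphi^*$ because $l(\varphi^*)$ is transversal to the (parallel) tangents. Put $g(\varphi):=|OA(\varphi)|-|OB(\varphi)|$. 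Exactly as in Proposition~\ref{max O outside}, Lemma~\ref{linearization}, applied at both $A^*$ and $B^*$ (legitimate since $O$ lies on neither tangent line, as noted below), shows that $\tilde g$ is differentiable at $\varphi^*$ and $\tilde g'(\varphi^*)=g'(\varphi^*)$. Since $\varphi^*$ is a local maximizer of $\tilde g$ in the interior of its domain, $\tilde g'(\varphi^*)=0$, whence $g'(\varphi^*)=0$. Everything thus reduces to locating the critical points of the linearized function $g$.

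The point worth emphasising is that, unlike in Lemma~\ref{observe}, I cannot invoke Proposition~\ref{CPP O outside} here: that ``CPP for angles'' statement presupposes a genuine angle with a vertex $E$, whereas the hypothesis $t_1(\varphi^*)\parallel t_2(\varphi^*)$ places us in the degenerate strip configuration, for which no counterpart of Remark~\ref{rem1} has been recorded on the ``$O$ outside'' side. I would therefore analyse $g$ directly. First observe that the ordering $B^*\in[OA^*]$ forces $O$ to lie outside the closed strip bounded by $t_1(\varphi^*)$ and $t_2(\varphi^*)$: if $O$ were strictly between the two parallel lines, a ray issuing from $O$ would meet them on opposite sides of $O$, contradicting the fact that $A^*$ and $B^*$ lie on a common ray from $O$. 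Consequently, for $\varphi$ near $\varphi^*$ the points $A(\varphi)$ and $B(\varphi)$ lie on the same side of $O$, so $g(\varphi)=|OA(\varphi)|-|OB(\varphi)|=|A(\varphi)B(\varphi)|$ is precisely the length of the chord that the strip cuts out of $l(\varphi)$.

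Let $w>0$ denote the (constant) distance between $t_1(\varphi^*)$ and $t_2(\varphi^*)$, and let $\alpha(\varphi)$ be the angle between $l(\varphi)$ and the common perpendicular to these lines; then $\alpha(\varphi)=\varphi-\varphi_\perp$ up to sign, where $\varphi_\perp$ is the rotation angle at which $l$ is perpendicular to the strip, so $\alpha$ is affine in $\varphi$ with slope $\pm 1$. A chord of the strip making angle $\alpha$ with its perpendicular has length $w/\cos\alpha$, so $g(\varphi)=w\,\sec\bigl(\alpha(\varphi)\bigr)$. Since $\sec$ is strictly convex on $(-\pi/2,\pi/2)$ with its unique minimum at $0$, the function $g$ is strictly convex and possesses exactly one critical point, namely $\varphi=\varphi_\perp$. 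Combining this with $g'(\varphi^*)=0$ gives $\varphi^*=\varphi_\perp$, i.e.\ $l(\varphi^*)=\tilde A(\varphi^*)\tilde B(\varphi^*)$ is perpendicular to both tangents, as claimed.

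I expect the only genuinely delicate step to be the verification that $O$ lies outside the strip, since this is what guarantees both that $g$ records the chord length with the correct sign and that Lemma~\ref{linearization} is applicable at each endpoint; once the geometry is set up correctly, the identity $g=w\sec\alpha$ and its strict convexity are routine.
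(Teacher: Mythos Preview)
Your proposal is correct and follows exactly the approach the paper intends: it says the proof ``is quite similar to the proof of Lemma~\ref{observe}'' and omits it, and your linearize-then-differentiate argument via Lemma~\ref{linearization} is precisely that scheme transplanted to the ``$O$ outside'' setting. The one place where you add genuine content is the explicit strip analysis $g(\varphi)=w\sec\alpha(\varphi)$, which is needed because the paper has no outside-case counterpart to Remark~\ref{rem1}; your observation that $B^*\in[OA^*]$ forces $O$ to lie outside the closed strip (hence off both tangent lines, so Lemma~\ref{linearization} applies and $g$ really records the chord length) is the right way to fill that small gap.
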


The proof of this statement is omitted because it is quite similar to the proof of Lemma \ref{observe}.

\begin{rem}
{\rm The assumption about the convexity of
the set $\mathcal D$ in Proposition \ref{max O outside} and  in
Proposition \ref{max}  is not essential for the validity of the statements.
It suffices to consider two smooth curves
$\gamma_1$ and $\gamma_2$ and a point $O$ as in the next picture.

\begin{figure}[h]  \label{Pic 3}
\centering
\includegraphics[scale=0.4]{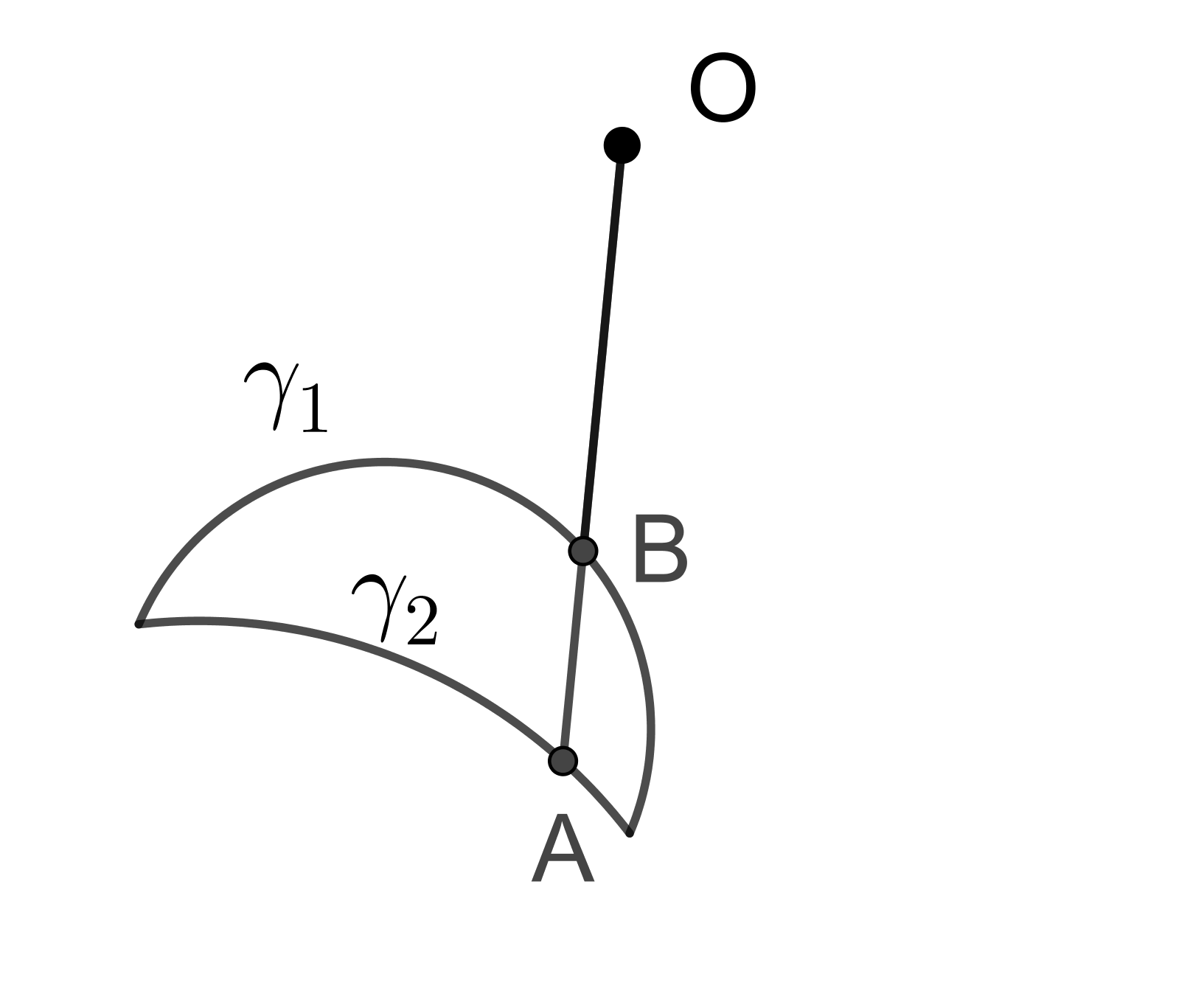}
\caption*{Arbitrary smooth curves $\gamma_1$ and $\gamma_2$}
\end{figure}

\vspace{6cm}

Then, CPP has a place for every local maximizer  $\varphi^*$
or a local minimizer $\varphi_*$ of the function
$\tilde{g}(\varphi) = |O \tilde{A}(\varphi)| - |O \tilde{B}(\varphi)|$ whenever $\varphi^*$ and $\varphi_*$
belong to the interior of the domain of $\tilde{g}$. Similar is the situation with the function
$\tilde{f}(\varphi) = |O\tilde{A}(\varphi)|
 + |O \tilde{B}(\varphi)|$.}
\end{rem}

\section{The case when the set $\mathcal{C}$ is in $\mathbb{R}^n, n >2 $ .}

In this section, we show how the 2-dimensional statements established in the previous section imply their higher dimensional generalizations, including the results announced in the paper's abstract.

Let $\mathcal C \subset \mathbb{R}^n,\, n > 2,$ be a closed convex set and $O$ be a point from its interior.
Consider all segments $[AB]$ containing $O$
 with end-points $A$ and $B$ from the boundary $\partial \, \mathcal C$, and define for such segments the  function  $F([AB])=|AB|$.

\begin{defi} \label{defi-min-max}
A segment $[A^*B^*]$ containing $O$, with end points on
$\partial \, \mathcal C$, is said to be a \emph{local minimizer
(maximizer)} for $F$ if there are open sets $U \ni A^*$ and $V \ni
B^*$ such that $F([AB]) = |AB| \geq
 |A^*B^*|$ ($F([AB]) = |AB| \leq
 |A^*B^*|)$ whenever $O \in [AB]$, $A \in \partial \, \mathcal C \cap U$ and $B \in \partial \, \mathcal C \cap V$.
\end{defi}

This definition agrees with the definitions of a local
minimizer (maximizer) for the function $f(\varphi)$ used in Section 2.
\smallskip

 Recall that a hyperplane $\Pi$ is \emph{supporting a set $\mathcal C$ at a point  $A \in \mathcal C$} if $A \in\Pi$ and $\mathcal C$
 is contained in one of the two closed half-spaces determined by $\Pi$.
 It is well known that for every point $A$ from the boundary of a closed convex set $\mathcal C \subset \mathbb{R}^n$,
 there exists at least one hyperplane that supports $\mathcal C$ at $A$. We call a convex set $\mathcal C$ with non-empty
 interior {\it smooth} if there is only one supporting hyperplane at every point of its boundary.

\begin{thm} \label{thmmin}
Let $O$ be an interior point of a closed convex set $\mathcal C
\subset \mathbb{R}^n,\, n \geq 2$, and let the segment $[A^*B^*]$
be a local minimizer for $F$. Then:

(i) There exists only one hyperplane $\Pi_1$   supporting $\mathcal C$   at  $A^*$ and only one hyperplane $\Pi_2$ supporting $\mathcal C$  at $B^*$;

(ii) The normal line to $\Pi_1$ at $A^*$ and the normal line to
$\Pi_2$ at $B^*$ intersect at a point belonging to the hyperplane
through $O$ which is orthogonal to the line $A^*B^*$.
\end{thm}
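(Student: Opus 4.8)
The plan is to reduce the $n$-dimensional statement to the two-dimensional results already established, by slicing $\mathcal{C}$ with a well-chosen plane and arguing that a local minimizer in $\mathbb{R}^n$ induces a local minimizer of the corresponding planar function. First I would fix the local minimizer $[A^*B^*]$ and consider the 2-plane $\Pi$ spanned by the line $A^*B^*$ together with a normal direction; more precisely, let me work in the plane through $O$ containing the segment and allow it to vary. The key observation is that the section $\mathcal{C}\cap\Pi$ is a compact convex set in the plane $\Pi\cong\mathbb{R}^2$ containing $O$ in its interior, and $[A^*B^*]=l(\varphi_*)\cap(\mathcal C\cap\Pi)$ is a chord through $O$ in this section. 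Since every nearby chord of $\mathcal C\cap\Pi$ through $O$ (obtained by rotating within $\Pi$) arises from admissible endpoints $A,B$ near $A^*,B^*$, the restriction of $F$ to such chords has a local minimum at $\varphi_*$; hence $\varphi_*$ is a local minimizer of the planar function $\tilde f$ on $\mathcal C\cap\Pi$, and Proposition \ref{dim2prop} applies.

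Next I would extract the two-dimensional consequences and promote them to $\mathbb{R}^n$. Proposition \ref{dim2prop}(ii) gives that $\mathcal C\cap\Pi$ is smooth at $A^*$ and $B^*$, i.e.\ has a unique supporting line there within $\Pi$. To deduce uniqueness of the supporting hyperplane of $\mathcal C$ in $\mathbb{R}^n$, I would argue by contradiction: if two distinct supporting hyperplanes $\Pi_1'\neq\Pi_1''$ supported $\mathcal C$ at $A^*$, then their normals span at least a 2-dimensional family of directions, and I could choose the slicing plane $\Pi$ so that it meets both in distinct supporting lines of the section at $A^*$, contradicting planar smoothness. This establishes part (i). For part (ii), the planar CPP from Proposition \ref{dim2prop}(i) says that within $\Pi$ the perpendicular to the section's tangent at $A^*$, the perpendicular at $B^*$, and the perpendicular to $A^*B^*$ at $O$ are concurrent. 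The perpendicular to $A^*B^*$ at $O$ within $\Pi$ is exactly the trace on $\Pi$ of the hyperplane through $O$ orthogonal to $A^*B^*$, and the normal line to $\Pi_i$ at the endpoint lies in $\Pi$ and coincides with the planar normal to the tangent; so the planar concurrency already places the intersection of the two normal lines in the required hyperplane.

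The step I expect to be the main obstacle is the careful bookkeeping in part (ii): I must verify that the three-dimensional normal lines to $\Pi_1$ and $\Pi_2$ actually lie in the slicing plane $\Pi$ and that their planar intersection point is the genuine $\mathbb{R}^n$ intersection. Concretely, the normal line to $\Pi_1$ at $A^*$ is the unique line through $A^*$ orthogonal to $\Pi_1$; since $\Pi_1\cap\Pi$ is the unique supporting line of the section, the normal to $\Pi_1$ is orthogonal to $\Pi_1\cap\Pi$ within $\Pi$, \emph{provided} this normal direction actually lies in $\Pi$. This is where the choice of $\Pi$ must be made canonically: I would take $\Pi$ to be the plane spanned by $A^*B^*$ and the common normal direction produced by planar CPP, and then check consistency, or alternatively observe that once smoothness is known, the gradient of the squared-distance function forces the two normals to be coplanar with $A^*B^*$. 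The cleanest route is to note that by (i) the tangent hyperplanes are unique, and the map sending $\varphi$ (rotation within a fixed plane) to the chord length is differentiable with the planar derivative formula; applying this in the two coordinate planes through $A^*B^*$ and invoking Lemma \ref{linearization} twice reduces the vanishing of the full gradient to the two planar CPP conditions, whose common solution is the asserted point on the orthogonal hyperplane.
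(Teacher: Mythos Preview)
Your slicing idea is the right instinct, and your own diagnosis of the difficulty is accurate: for part (ii) you need the actual normal lines to $\Pi_1$ and $\Pi_2$ in $\mathbb{R}^n$ to lie in the slicing plane $\Pi$, and nothing in your argument guarantees this. For a generic $2$-plane through $A^*B^*$, the trace $\Pi\cap\Pi_1$ is a supporting line of the section, but the perpendicular to that line \emph{within $\Pi$} is not the normal line to $\Pi_1$ in $\mathbb{R}^n$ unless the normal direction to $\Pi_1$ happens to lie in $\Pi$. Your proposed fixes do not close this: choosing $\Pi$ to contain ``the common normal direction produced by planar CPP'' is circular, and the appeal to a gradient/Lagrange argument or to ``two coordinate planes'' is left as a sketch with no mechanism.

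The missing idea, which the paper supplies, is to first prove that the line $A^*B^*$ is orthogonal to the intersection $\Pi_1\cap\Pi_2$. One does this by slicing not in an arbitrary direction but in directions $h$ parallel to $\Pi_1\cap\Pi_2$: for such $h$ the two supporting lines $A^*+\mathbb{R}h$ and $B^*+\mathbb{R}h$ of the section are \emph{parallel}, so the section problem is a strip problem, whose unique minimizer is perpendicular to the strip. Running over all such $h$ gives $A^*B^*\perp L$, where $L$ is the linear direction of $\Pi_1\cap\Pi_2$. Now there is a canonical choice of slicing plane: $\Pi=L^\perp$ (the $2$-plane through $A^*B^*$ orthogonal to $L$). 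Since the normal to $\Pi_i$ is orthogonal to $\Pi_i\supset L$, each normal direction lies in $L^\perp=\Pi$, so the genuine normal lines sit inside $\Pi$, and the planar CPP there yields (ii) directly. The paper then derives (i) from (ii) (rather than the other way around): once CPP holds for every pair of supporting hyperplanes, the meeting point $P$ is already pinned down by $\Pi_1$ alone, which forces any supporting hyperplane at $B^*$ to be orthogonal to $PB^*$, hence unique (with a separate short argument for the parallel case).
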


\begin{proof}
The particular case of this theorem when $n=2$ follows from
Proposition \ref{dim2prop}. Therefore, we will assume that $n
> 2$. Our first step is to show that  (ii) holds for arbitrary hyperplanes
$\Pi_1$ and $\Pi_2$ supporting $\mathcal C$ at $A^*$ and $B^*$ ,
respectively. Suppose first that $\Pi_1$ and $\Pi_2$ are not
parallel. Then there exist a linear subspace $L \subset
\mathbb{R}^n$ of co-dimension two and a point $T \in \Pi_1 \cap
\Pi_2 $ such that $\Pi_1 \cap \Pi_2 = \{T + h: h \in L\} = T + L$.
Take an arbitrary non-zero vector $h \in L $ and consider the
two-dimensional plane $\Pi(h)$ containing the lines $l_1(h)=\{A^*
+ th: t \in R\}$ and $l_2(h)=\{B^* + th: t \in R\}$. It is clear
that the parallel lines  $l_1(h)$ and $l_2(h)$ are supporting for
the set $\mathcal C \cap \Pi(h)$ in the plane $\Pi(h)$. Moreover,
$[A^*B^*]$ is a local minimizer for the the function $\tilde{f}$ corresponding to the set $\Pi(h) \cap \mathcal
C $. Then, as we have seen in the previous section, the segment $[A^*B^*]$ is a local minimizer for the function $f$ corresponding to the strip bounded by the lines $l_1(h)$ and $l_2(h)$. This means that $A^*B^*$ is perpendicular to  $l_1(h)$ and $l_2(h)$.
Since $h$ was an arbitrary vector from $L$ we
conclude that the line $A^*B^*$ is orthogonal to $L$ (and to
$T+L$).
 Hence there exists a two dimensional plane $\Pi$ which is orthogonal to $T+ L$ and contains the line $A^*B^*$. Put $l_1 = \Pi_1 \cap \Pi$ and $l_2 =\Pi_2 \cap \Pi$.
 The lines $l_1$ and $l_2$ are supporting in $\Pi$ the set $\mathcal C \cap \Pi$ at the points $A^*$ and $B^*$ , respectively, and $[A^*B^*]$ is a local minimizer for the set $\mathcal C \cap \Pi$  in $\Pi$.
 Proposition \ref{dim2prop} gives us three concurrent lines $p_0, p_1, p_2$ in $\Pi$ (with common point $P$)
 which are perpendicular , respectively, to the line $A^*B^*$ at $O$, to $l_1$ at $A^*$ and to $l_2$ at $B^*$.
 As $p_1$ lies in $\Pi$, it is orthogonal to $ T + L$. It is also perpendicular to $l_1$.
This implies that $p_1$ coincides with the normal line to $\Pi_1$ at $A^*$.
 Similarly, $p_2$ coincides with the normal line to $\Pi_2$ at $B^*$. Clearly, the line $p_0$, together with its point $P$ lies in the hyperplane through $O$ which is orthogonal to the line $A^*B^*$.
 Thus (ii)  holds for any nonparallel hyperplanes $\Pi_1$ and $\Pi_2$ supporting $\mathcal C$ at $A^*$ and $B^*$ , respectively.

 \smallskip

 Note that the line $p_1$, the normal to $\Pi_1$ at $A^*$, does not depend on the hyperplane $\Pi_2$ supporting $\mathcal C$ at $B^*$.
 Hence, the point $P$, as the intersection of $p_1$ and the hyperplane through $O$
 orthogonal to the line $A^*B^*$ is the same for every hyperplane $\Pi_2$ supporting $\mathcal C$ at $B^*$ and
 nonparallel to $\Pi_1$. This, in turn, means that any hyperplane $\Pi_2'$ supporting $\mathcal C$ at $B^*$ and
 not parallel to $\Pi_1$ is orthogonal to the line $PB^*$ and, therefore coincides with $\Pi_2$.

 In order to complete the proof, we have to discuss also the case where, along with $\Pi_2$,  there is another hyperplane $\Pi'$
 which supports $\mathcal C$ at $B^*$ and is parallel to $\Pi_1$.
 Suppose such a hyperplane $\Pi'$ exists. In this case, there is a linear subspace $L \subset \mathbb R^n$ of codimension one such that $\Pi_1 = A^* + L$ and $\Pi' = B^* + L$.
As above we can see that for every non zero vector $h \in L$ the
line $A^*B^*$ is perpendicular to the lines $l_1(h)=\{A^* + th: t
\in \mathbb{R}\}$ and $l_2(h)=\{B^* + th: t \in \mathbb{R}\}$.
This means that $A^*B^*$ is normal to both $\Pi_1$ and $\Pi'$ at
$A^*$ and $B^*$ , respectively. Thus $p_1 = A^*B^* $ and,
therefore, $P \in A^*B^*$. Since $PB^*$ is orthogonal to $\Pi_2$
we get $p_2 = A^*B^*$. This contradicts the assumption that
$\Pi_1$ and $\Pi_2$ are not parallel.
 \end{proof}

 As we have seen in the previous section (Proposition \ref{polygon}) one cannot expect a similar result for local maximizers.
 If, however, the set $\mathcal C$ is smooth, then CPP holds for local maximizers as well.
 \begin{thm}
 Let $O$ be an interior point for a smooth closed convex set $\mathcal D \subset \mathbb{R}^n,\, n \geq 2$, and let
 the segment $[A^*B^*]$ be a local maximizer for $F$. Let $\Pi_1$ and $\Pi_2$ be the only supporting hyperplanes at $A^*$ and $B^*$, respectively. Then
the normal line to $\Pi_1$ at $A^*$ and the normal line to $\Pi_2$
at $B^*$ intersect at a point belonging to the hyperplane through $O$ which
is orthogonal to the line $A^*B^*$.
 \end{thm}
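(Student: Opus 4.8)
The plan is to mirror the proof of Theorem \ref{thmmin} almost line for line, replacing each appeal to the interior-point minimizer results (Proposition \ref{dim2prop}) by the corresponding smooth-set maximizer results (Proposition \ref{max} and Lemma \ref{observe}). The reduction engine is identical: since $\mathcal D$ is smooth, at each of $A^*$ and $B^*$ there is a unique supporting hyperplane, namely $\Pi_1$ and $\Pi_2$, so part (i) of the earlier theorem is now a hypothesis rather than something to prove, and only the concurrency statement (ii) remains. First I would treat the generic case in which $\Pi_1$ and $\Pi_2$ are not parallel, so that $\Pi_1 \cap \Pi_2 = T + L$ for a codimension-two subspace $L$.

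The key geometric step is the slicing argument. For an arbitrary nonzero $h \in L$, I would form the two-dimensional plane $\Pi(h)$ spanned by the parallel lines $l_1(h) = \{A^* + th : t \in \mathbb{R}\}$ and $l_2(h) = \{B^* + th : t \in \mathbb{R}\}$, which support the planar convex set $\mathcal D \cap \Pi(h)$ at $A^*$ and $B^*$. The crucial observation is that $[A^*B^*]$ is a \emph{local maximizer} for the function $\tilde f$ associated with $\mathcal D \cap \Pi(h)$, and the two supporting lines in this slice are parallel. This is exactly the hypothesis of Lemma \ref{observe}, which then forces the line $A^*B^*$ to be perpendicular to $l_1(h)$ and $l_2(h)$. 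Since $h \in L$ was arbitrary, $A^*B^*$ is orthogonal to all of $L$, hence to $T + L$. I would then choose the two-dimensional plane $\Pi$ orthogonal to $T+L$ and containing $A^*B^*$, set $l_1 = \Pi_1 \cap \Pi$ and $l_2 = \Pi_2 \cap \Pi$, and note that $[A^*B^*]$ is a local maximizer for the slice $\mathcal D \cap \Pi$ in $\Pi$. Now Proposition \ref{max} (applied to the local maximizer in the smooth planar set $\mathcal D \cap \Pi$) yields the three concurrent perpendiculars $p_0, p_1, p_2$ through a common point $P$; the argument that $p_1$ and $p_2$ coincide with the normal lines to $\Pi_1$ and $\Pi_2$, and that $P$ lies in the hyperplane through $O$ orthogonal to $A^*B^*$, is then verbatim as in Theorem \ref{thmmin}.

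The step I expect to be the main obstacle, and the one point where smoothness genuinely changes the bookkeeping, is the parallel case $\Pi_1 \parallel \Pi_2$. In the minimizer proof this case was dispatched by \emph{deriving a contradiction}, since the uniqueness conclusion (i) was what one was establishing; here, by contrast, smoothness is assumed, so a pair of parallel supporting hyperplanes at $A^*$ and $B^*$ is perfectly admissible and must be handled directly rather than excluded. I would argue that if $\Pi_1 = A^* + L$ and $\Pi_2 = B^* + L$ for a codimension-one subspace $L$, then running the same slicing argument for each nonzero $h \in L$ through Lemma \ref{observe} shows $A^*B^*$ is perpendicular to every such $l_1(h), l_2(h)$, whence $A^*B^*$ is normal to $\Pi_1$ at $A^*$ and to $\Pi_2$ at $B^*$ simultaneously. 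In that situation the two normal lines literally coincide with the line $A^*B^*$, so they trivially intersect (along the whole line), and every point of $A^*B^*$—in particular the point where $A^*B^*$ meets the hyperplane through $O$ orthogonal to it—serves as the required common point; this makes (ii) hold in the degenerate-but-legitimate parallel case. Assembling the nonparallel and parallel cases completes the proof, and because $\Pi_1, \Pi_2$ are now unique by hypothesis there is no residual ``second supporting hyperplane'' bookkeeping of the kind that occupied the latter half of Theorem \ref{thmmin}.
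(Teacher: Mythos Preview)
Your proposal is correct and follows essentially the paper's own approach: the paper says the proof ``almost coincides with the proof of Theorem \ref{thmmin},'' using Lemma \ref{observe} to get $A^*B^* \perp l_1(h), l_2(h)$ and Proposition \ref{max} in place of Proposition \ref{dim2prop}, exactly as you outline. The one ingredient the paper makes explicit and you take for granted is Lemma \ref{relative}: to invoke Lemma \ref{observe} and Proposition \ref{max} on the slice $\mathcal D \cap \Pi$ (or $\mathcal D \cap \Pi(h)$) you need that slice to be a \emph{smooth} planar convex set, and smoothness of a 2-dimensional section does not follow trivially from smoothness of $\mathcal D$ in $\mathbb R^n$---the paper isolates this as a separate lemma proved by a separation argument. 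Your direct handling of the parallel case $\Pi_1 \parallel \Pi_2$ is a legitimate and necessary adjustment, since here (unlike in Theorem \ref{thmmin}) uniqueness is hypothesized rather than proved and the parallel configuration cannot be excluded by contradiction.
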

The proof of this theorem is omitted. It almost coincides with the proof of Theorem \ref{thmmin}. One uses Lemma \ref{observe}
to show that the line$A^*B^*$ is perpendicular to the parallel lines $l_1(h)$ and $l_2(h)$. Instead of  Proposition \ref{dim2prop} one uses now Proposition \ref{max}. The possibility to apply Proposition \ref{max} is based on the following simple observation, which is easily proved by a separation theorem argument.

\begin{lemma} \label{relative}
Let $\mathcal D$ be a smooth closed convex subset of
$\mathbb{R}^n,\, n > 2$, with nonempty interior, and let a two
dimensional plane $\Pi$ intersects its interior. Then the
intersection $\mathcal C':=\mathcal C \cap \Pi$ is a smooth convex
subset of $\Pi$ (i.e. there is a unique supporting for $\mathcal
C'$ line  in $\Pi$ at every point of the boundary $\partial \,
\mathcal C'$ in  $\Pi$).
\end{lemma}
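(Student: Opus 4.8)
The plan is to show that for any boundary point $P_0 \in \partial\,\mathcal C'$ (the boundary taken relative to the plane $\Pi$), there is a unique line in $\Pi$ supporting $\mathcal C'$ at $P_0$, and by the cited criterion (\cite[Theorem 25.1]{Rock}) this is the same as producing a tangent line to $\partial\,\mathcal C'$ at $P_0$. The natural tool is the separation/supporting hyperplane machinery for $\mathcal D$ in $\mathbb R^n$, combined with the hypothesis that $\mathcal D$ is smooth, so that each of its boundary points carries a \emph{unique} supporting hyperplane. First I would fix $P_0 \in \partial\,\mathcal C'$ and observe that, since $\Pi$ meets the interior of $\mathcal D$, the point $P_0$ must lie on $\partial\,\mathcal D$ as well: a point in the relative interior of $\mathcal C' = \mathcal D \cap \Pi$ that is interior to $\mathcal D$ stays interior, so the only way $P_0$ can be a relative boundary point of $\mathcal C'$ is for it to lie on $\partial\,\mathcal D$. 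Hence there is exactly one supporting hyperplane $\Pi_0$ for $\mathcal D$ at $P_0$, by smoothness.

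Next I would produce a supporting line for $\mathcal C'$ in $\Pi$ by intersecting $\Pi_0$ with $\Pi$. Since $P_0$ is interior to $\mathcal D$ relative to points of $\Pi$ only on its boundary, and $\Pi$ meets $\operatorname{int}\mathcal D$, the plane $\Pi$ is not contained in the supporting hyperplane $\Pi_0$ (otherwise every point of $\mathcal C'$ would lie on $\Pi_0$, forcing $\Pi \cap \operatorname{int}\mathcal D = \emptyset$, contradicting the hypothesis). Therefore $\ell_0 := \Pi_0 \cap \Pi$ is a line in $\Pi$ passing through $P_0$, and since $\mathcal C' \subset \mathcal D$ lies in one closed half-space bounded by $\Pi_0$, the set $\mathcal C'$ lies in the corresponding closed half-plane of $\Pi$ bounded by $\ell_0$. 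Thus $\ell_0$ supports $\mathcal C'$ at $P_0$, establishing existence.

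For uniqueness, I would argue by contradiction: suppose $\mathcal C'$ had two distinct supporting lines $\ell'$ and $\ell''$ in $\Pi$ at $P_0$. Each of these lines lies in $\Pi$ and supports $\mathcal C'$, but I want to lift each to a supporting hyperplane of $\mathcal D$ in $\mathbb R^n$. The key observation is that a supporting line $\ell$ for $\mathcal C'$ in $\Pi$ determines a closed half-plane $H_\ell \subset \Pi$ containing $\mathcal C'$; applying the separation theorem to the convex body $\mathcal D$ and to a point (or direction) exhibiting that $P_0$ is a boundary point in the normal direction of $\ell$ within $\Pi$, one extends the outward normal of $\ell$ in $\Pi$ to an outward normal of a supporting hyperplane of $\mathcal D$ at $P_0$. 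Two distinct lines $\ell', \ell''$ give two distinct in-plane normal directions, hence two distinct supporting hyperplanes $\Pi', \Pi''$ of $\mathcal D$ at $P_0$, contradicting the smoothness of $\mathcal D$. This forces $\ell' = \ell''$, so the supporting line in $\Pi$ is unique, which is exactly the assertion that $\mathcal C'$ is smooth.

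I expect the main obstacle to be the uniqueness half, and specifically the lifting step: from an in-plane supporting line $\ell$ one must manufacture an $\mathbb R^n$ supporting hyperplane at $P_0$ whose intersection with $\Pi$ is $\ell$, and one must ensure that distinct in-plane normals yield genuinely distinct ambient hyperplanes rather than collapsing to the same $\Pi_0$. The clean way is to note that any supporting hyperplane $\Pi_0$ of $\mathcal D$ at $P_0$ cuts $\Pi$ in a supporting line of $\mathcal C'$, so the map ``ambient supporting hyperplane $\mapsto$ in-plane supporting line'' is well defined; since $\mathcal D$ is smooth there is only \emph{one} ambient supporting hyperplane, hence its image $\ell_0$ is the \emph{only} in-plane supporting line that can arise this way, and a separation argument shows every in-plane supporting line does arise this way. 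The care required is precisely in verifying that the normal cone of $\mathcal C'$ at $P_0$ (relative to $\Pi$) is the image of the normal cone of $\mathcal D$ at $P_0$ under orthogonal projection onto the direction space of $\Pi$, which, combined with the singleton normal cone of the smooth body $\mathcal D$, yields a singleton normal cone for $\mathcal C'$ and therefore a unique supporting line.
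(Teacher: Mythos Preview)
Your proposal is correct and matches the approach the paper indicates: the paper does not write out a proof but merely notes that the lemma ``is easily proved by a separation theorem argument.'' Your lift of each in-plane supporting line to an ambient supporting hyperplane of $\mathcal D$ via separation, followed by invoking uniqueness of the latter from the smoothness of $\mathcal D$, is precisely such an argument.
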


Minor changes are needed to formulate the corresponding results for the case when the point $O$ is outside the set $\mathcal C$.

Let $\mathcal C \subset \mathbb{R}^n,\, n \geq 2,$ be a closed convex set  with nonempty interior and let  $O$ be a point outside it.
Consider all segments $[AB]$
 with end-points $A$ and $B$ on the boundary $\partial \, \mathcal C$, and such that the line $AB$ passes through $O$ and $B \in [OA]$.
 Define for such segments the  function  $G([AB])=|AB|$. The definition of ``local minimizer''and ``local maximizer''for $G$ are introduced as in Definition \ref{defi-min-max}.

\begin{thm} \label{Ooutsidemin}
Let $O$ be a point outside a closed convex set $\mathcal C \subset
\mathbb{R}^n,\, n \geq 2$, with nonempty interior, and let the
segment $[A^*B^*]$ be a local minimizer for $G$. Suppose that
$[A^*B^*]$ intersects the interior of $\mathcal C$. Then:

(i) There exists only one hyperplane $\Pi_1$   supporting $\mathcal C$   at  $A^*$ and only one hyperplane $\Pi_2$ supporting $\mathcal C$  at $B^*$;

(ii) The normal line to $\Pi_1$ at $A^*$ and the normal line to
$\Pi_2$ at $B^*$ intersect at a point in the hyperplane through
$O$ which is orthogonal to the line $A^*B^*$.
\end{thm}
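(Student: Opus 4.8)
The plan is to run the same reduction-to-two-dimensions used in the proof of Theorem~\ref{thmmin}, with the only change that the interior planar input, Proposition~\ref{dim2prop}, is everywhere replaced by its exterior analogue, Proposition~\ref{tilde O outside} (whose engine is Proposition~\ref{CPP O outside}). The case $n=2$ is literally Proposition~\ref{tilde O outside}, so I would assume $n>2$ and first establish (ii) for an \emph{arbitrary} pair of supporting hyperplanes $\Pi_1\ni A^*$ and $\Pi_2\ni B^*$, deducing the uniqueness statement (i) afterwards exactly as in Theorem~\ref{thmmin}. Throughout, the three points $O,A^*,B^*$ are collinear with $B^*\in[OA^*]$, so every two-plane that contains the line $A^*B^*$ automatically contains $O$ and inherits the betweenness relation $B^*\in[OA^*]$.

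Suppose first $\Pi_1$ and $\Pi_2$ are not parallel, write $\Pi_1\cap\Pi_2=T+L$ with $L$ a codimension-two subspace, and for each nonzero $h\in L$ form the two-plane $\Pi(h)$ spanned by the parallel lines $l_1(h)=A^*+\mathbb{R}h$ and $l_2(h)=B^*+\mathbb{R}h$. These lines support the planar set $\mathcal C\cap\Pi(h)$, and since $B^*\in[OA^*]$ the point $O$ lies beyond $l_2(h)$ and is therefore \emph{outside} the strip they bound; moreover $[A^*B^*]$ is a local minimizer, relative to $\Pi(h)$, of the exterior length function $\tilde g$. Applying Proposition~\ref{tilde O outside} to this strip, the CPP it yields has two of its three perpendiculars parallel (being orthogonal to the common direction $h$), so concurrency forces them to coincide; hence $A^*B^*\perp h$. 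As $h\in L$ was arbitrary, $A^*B^*$ is orthogonal to $L$, hence to $T+L$. I then choose the two-plane $\Pi$ containing $A^*B^*$ and orthogonal to $T+L$, set $l_1=\Pi_1\cap\Pi$ and $l_2=\Pi_2\cap\Pi$, and apply Proposition~\ref{tilde O outside} inside $\Pi$ (where $O$ is exterior to $\mathcal C\cap\Pi$, $B^*\in[OA^*]$, and $[A^*B^*]$ is again a local minimizer). This produces three concurrent lines $p_0,p_1,p_2$ in $\Pi$, perpendicular respectively to $A^*B^*$ at $O$, to $l_1$ at $A^*$, and to $l_2$ at $B^*$.

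It remains to lift this planar concurrency back to $\mathbb{R}^n$, verbatim as in Theorem~\ref{thmmin}. Since $p_1\subset\Pi$ we have $p_1\perp(T+L)$, in particular $p_1\perp L$; and $p_1\perp l_1$. Because the direction space of the hyperplane $\Pi_1$ is spanned by $L$ together with the direction of $l_1$, it follows that $p_1$ is the normal line to $\Pi_1$ at $A^*$, and likewise $p_2$ is the normal to $\Pi_2$ at $B^*$; the common point $P=p_1\cap p_2$ lies on $p_0$, hence in the hyperplane through $O$ orthogonal to $A^*B^*$. This proves (ii) for every non-parallel pair. For uniqueness I would copy the final two paragraphs of the proof of Theorem~\ref{thmmin}: $p_1$ does not depend on $\Pi_2$, so $P$ is the same for all $\Pi_2$ non-parallel to $\Pi_1$, forcing any such $\Pi_2'$ to be orthogonal to $PB^*$ and hence to coincide with $\Pi_2$; a second supporting hyperplane at $B^*$ parallel to $\Pi_1$ is excluded by the same codimension-one strip argument, which would make $p_1=p_2=A^*B^*$ and contradict non-parallelism.

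The step I expect to require the most care is checking that the hypotheses of Proposition~\ref{tilde O outside} genuinely transfer to the planar slices. Here the assumption that $[A^*B^*]$ meets the interior of $\mathcal C$ is essential: it guarantees that $[A^*B^*]$ meets the relative interior of $\mathcal C\cap\Pi$ in $\Pi$ (and of $\mathcal C\cap\Pi(h)$ in $\Pi(h)$), so that the angle $\varphi_*$ corresponding to $[A^*B^*]$ is an \emph{interior} point of the domain $D$ of the restricted $\tilde g$, which is exactly what Proposition~\ref{tilde O outside} demands. The nesting inequality $|A(\varphi)B(\varphi)|\ge|\tilde A(\varphi)\tilde B(\varphi)|$ with equality at $\varphi_*$, used to pass the local-minimizer property from $\mathcal C\cap\Pi$ to the strip (and to the pair $l_1,l_2$), is the same elementary comparison already exploited in Proposition~\ref{tilde O outside}; verifying that it persists near $\varphi_*$, together with the location of $O$ outside each strip, are the only genuinely new bookkeeping points relative to the interior case.
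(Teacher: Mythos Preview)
Your proposal is correct and follows exactly the approach the paper indicates: the paper's proof of Theorem~\ref{Ooutsidemin} is the single sentence ``follows step-by-step the proof of Theorem~\ref{thmmin}, using Proposition~\ref{tilde O outside} in place of Proposition~\ref{dim2prop},'' and you have carried this out in detail. Your explicit remark that the hypothesis ``$[A^*B^*]$ meets the interior of $\mathcal C$'' is what places $\varphi_*$ in the interior of the domain $D$ required by Proposition~\ref{tilde O outside} is a point the paper leaves implicit, and is a welcome clarification.
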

The proof of this theorem follows step-by-step the proof of Theorem \ref{thmmin}. Instead of Proposition \ref{dim2prop} one uses Proposition \ref{tilde O outside}.

\begin{thm}
Let $O$ be a point outside a smooth closed convex set $\mathcal D
\subset \mathbb{R}^n,\, n \geq 2$ with nonempty interior and
suppose the segment $[A^*B^*]$ is a local maximizer for $G$ which
intersects the interior of $\mathcal D$. Let $\Pi_1$ and $\Pi_2$
be the only supporting for $\mathcal D$ hyperplanes at $A^*$ and
$B^*$, respectively. Then the normal line to $\Pi_1$ at $A^*$ and
the normal line to $\Pi_2$ at $B^*$ intersect at a point belonging to the
hyperplane through $O$ which is orthogonal to the line $A^*B^*$.
\end{thm}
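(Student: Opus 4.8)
The plan is to run the proof of Theorem \ref{thmmin} almost verbatim, making the substitutions dictated by the present hypotheses. Because $O$ lies outside $\mathcal D$, the function $F$ is replaced by $G$ (equivalently, the planar $\tilde f$ is replaced by the difference function $\tilde g$); because we deal with a local \emph{maximizer}, the perpendicularity of $A^*B^*$ to the auxiliary parallel lines is extracted from Lemma \ref{observe1} rather than from the strict convexity of a strip function; and the final two-dimensional concurrency is supplied by Proposition \ref{max O outside} in place of Proposition \ref{dim2prop}. For $n=2$ the assertion is exactly the last statement of Proposition \ref{max O outside}, so I assume $n>2$. Since $\mathcal D$ is smooth, the uniqueness of $\Pi_1$ and $\Pi_2$ is automatic, and only the concurrency statement needs proof.

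Suppose first that $\Pi_1$ and $\Pi_2$ are not parallel, and write $\Pi_1\cap\Pi_2=T+L$ with $L$ a subspace of codimension two. For a nonzero $h\in L$ set $l_1(h)=\{A^*+th:t\in\mathbb{R}\}$, $l_2(h)=\{B^*+th:t\in\mathbb{R}\}$, and let $\Pi(h)$ be the plane containing these two parallel lines. Since $\Pi_1$ contains $A^*$ and the direction $h\in L$, we have $l_1(h)=\Pi_1\cap\Pi(h)$, which is a supporting line of the slice $\mathcal D\cap\Pi(h)$ at $A^*$, and similarly for $l_2(h)$ at $B^*$. As $[A^*B^*]$ meets the interior of $\mathcal D$, so does $\Pi(h)$, and Lemma \ref{relative} makes the slice a smooth planar convex set, so $l_1(h),l_2(h)$ are its tangents. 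Every chord through $O$ of the slice is also a chord through $O$ of $\mathcal D$, hence $[A^*B^*]$ is a local maximizer of the planar difference function $\tilde g$ for this slice; since $l_1(h)\parallel l_2(h)$, Lemma \ref{observe1} forces $A^*B^*\perp h$. Letting $h$ range over $L$ gives $A^*B^*\perp L$, hence $A^*B^*\perp(T+L)$.

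Now choose the two-dimensional plane $\Pi$ orthogonal to $T+L$ and containing $A^*B^*$, and put $l_i=\Pi_i\cap\Pi$. By Lemma \ref{relative} the slice $\mathcal D\cap\Pi$ is again smooth, the lines $l_1,l_2$ are its tangents at $A^*,B^*$, and $[A^*B^*]$ is a local maximizer of its difference function with the corresponding $\varphi^*$ interior to the domain (again because $[A^*B^*]$ meets the interior). Proposition \ref{max O outside} then provides three concurrent lines $p_0,p_1,p_2$ in $\Pi$, perpendicular respectively to $A^*B^*$ at $O$, to $l_1$ at $A^*$, and to $l_2$ at $B^*$, meeting at a point $P$. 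Exactly as in Theorem \ref{thmmin}, $p_1\subset\Pi\perp(T+L)$ together with $p_1\perp l_1$ shows that $p_1$ is orthogonal to all of $\Pi_1$, i.e. $p_1$ is the normal line to $\Pi_1$ at $A^*$; likewise $p_2$ is the normal to $\Pi_2$ at $B^*$; and $p_0$ lies in the hyperplane through $O$ orthogonal to $A^*B^*$. Thus the two normal lines meet at the point $P$ of that hyperplane.

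It remains to treat the case $\Pi_1\parallel\Pi_2$, absent from Theorem \ref{thmmin}, where the normals are parallel and ``intersection'' can only mean coincidence. Writing $\Pi_1=A^*+L$ and $\Pi_2=B^*+L$ with $L$ of codimension one, the same slicing argument applies to every nonzero $h\in L$: the slice $\mathcal D\cap\Pi(h)$ is smooth, $l_1(h),l_2(h)$ are parallel tangents, $[A^*B^*]$ is a local maximizer of $\tilde g$, and Lemma \ref{observe1} gives $A^*B^*\perp h$. Hence $A^*B^*\perp L$, so the line $A^*B^*$ is simultaneously the normal to $\Pi_1$ at $A^*$ and to $\Pi_2$ at $B^*$; the two normals coincide with $A^*B^*$ and meet the orthogonal hyperplane through $O$ in the single point $O$, which trivially lies in it. The main obstacle is conceptual rather than computational: for a maximizer the strict-convexity shortcut of Theorem \ref{thmmin} is unavailable, so the perpendicularity of $A^*B^*$ to the lines $l_i(h)$ must come from the ``twofold linearization'' underlying Lemma \ref{observe1}; this step is legitimate only because Lemma \ref{relative} guarantees smoothness of the planar slices and because the hypothesis that $[A^*B^*]$ meets the interior keeps $\varphi^*$ interior to each slice's domain.
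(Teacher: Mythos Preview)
Your proof is correct and follows exactly the route the paper sketches: mimic Theorem \ref{thmmin}, use Lemma \ref{observe1} (in place of the strip argument) to obtain $A^*B^*\perp l_i(h)$, and invoke Lemma \ref{relative} together with Proposition \ref{max O outside} (in place of Proposition \ref{dim2prop}) for the planar concurrency. Your explicit treatment of the case $\Pi_1\parallel\Pi_2$---which here is not excluded a priori as it was in Theorem \ref{thmmin}---is a welcome addition that the paper's sketch leaves implicit.
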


The proof of this theorem follows the steps in the proof of Theorem \ref{thmmin}.
One uses Lemma \ref{observe1}
to show that the line$A^*B^*$ is perpendicular to the parallel lines $l_1(h)$ and $l_2(h)$.
Instead of using Proposition \ref{dim2prop}
one can apply Lemma \ref{relative} and Proposition \ref{max O outside}.

\begin{rem} {\rm
Using the method of Lagrange multipliers, one may show that the theorems proved above for convex sets in $\mathbb R^n, n\geq 2$, remain true for any $C^1$-smooth hypersurface in $\mathbb R^n$ (allowing the intersection point of the normals to be an infinite point). Details will appear in a forthcoming paper.}
\end{rem}

\medskip
At the end of this section, we present
a generalization of
Proposition \ref{polygon} which was proved for polygons in $\mathbb R^2$. The generalization we present is valid for arbitrary closed convex sets $\mathcal C \subset \mathbb R^n$. This generalization will be used in the next section.

\begin{defi} \label{dimlem}
{\rm Let  $\mathcal C \subset \mathbb{R}^n,\, n \geq 2,$ be a closed convex set with nonempty interior
 and $A \in \mathcal C$. Under a {\it dimension of $A$ with respect to $\mathcal C$} (denoted by $d_{\mathcal C}(A)$) we understand the maximal integer $k$
 for which there exists a $k$-dimensional affine subspace $H \subset \mathbb {R}^n$  such that $A$ belongs to the interior
of the set $H \cap \mathcal C$ in $H$.}
\end{defi}
In particular, $A$ is an interior point
of $\mathcal C$
if and only if $d_{\mathcal C}(A) = n.$ Also, $A$ is an extreme point of $\mathcal C$ if and only if $d_{\mathcal C}(A)=0$.
The most important case for us is when
$A \in \partial \, \mathcal C$. In this case,
the affine subspace $H$ in the above definition does not intersect the interior of $\mathcal C$ . Further, it is easy to realize that $H$ is uniquely determined.

\begin{prop} \label{p5}
Let  $\mathcal C \subset \mathbb{R}^n,\, n \geq 2,$ be a closed convex set with nonempty interior
 and $O$ a point in $\mathbb{R}^n $. Consider all lines through $O$ which intersect $\mathcal C$ in some (possibly degenerated) segment $[AB]$.

 (i) if $O \not \in \mathcal C$ and the segemnt
 $[A^*B^*]$ is a local maximizer for the function $G([AB])$ defined above, then
  $$d_{\mathcal C}(A^*) + d_{\mathcal C}(B^*) \leq n.$$

 (ii) if $O$ is an interior point of $\mathcal C$ and  the segment
 $[A^*B^*]$ is a local maximizer
for the function $F([AB])$ defined above, then

 $$d_{\mathcal C}(A^*) + d_{\mathcal C}(B^*)  \leq n-1.$$

\end{prop}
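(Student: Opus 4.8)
The plan is to argue by contradiction, exploiting the dimension-counting idea already present in Proposition \ref{polygon}: if the end-points $A^*$ and $B^*$ each have too large a local dimension, then the boundary of $\mathcal{C}$ is locally ``flat enough'' near both points that we can find a direction of variation making $[A^*B^*]$ non-maximal. Concretely, since $A^* \in \partial\,\mathcal{C}$, Definition \ref{dimlem} gives a unique affine subspace $H_1$ of dimension $d_{\mathcal{C}}(A^*)$ with $A^*$ interior to $H_1 \cap \mathcal{C}$ in $H_1$; likewise a unique $H_2$ of dimension $d_{\mathcal{C}}(B^*)$ for $B^*$. Both $H_1$ and $H_2$ are affine \emph{flat} pieces of the boundary, so $\mathcal{C}$ contains a full neighborhood of $A^*$ inside $H_1$ and of $B^*$ inside $H_2$. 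The guiding principle is that along any direction in which both end-points can move while staying in these flat pieces, the length $|AB|$ behaves exactly as in the angle/strip model of Section 2, where the relevant function is strictly convex and hence admits \emph{no} local maximum in the interior.

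The key step is to produce a single direction of simultaneous variation. First I would translate coordinates so that $A^*$ is the origin and let $V_1 = H_1 - A^*$, $V_2 = H_2 - B^*$ be the associated linear subspaces, of dimensions $d_{\mathcal{C}}(A^*)$ and $d_{\mathcal{C}}(B^*)$. For case (i), when $O \notin \mathcal{C}$ and $[A^*B^*]$ is a local maximizer of $G$, I claim that if $d_{\mathcal{C}}(A^*)+d_{\mathcal{C}}(B^*) > n$, then $\dim(V_1 \cap V_2) \geq 1$ whenever $V_1$ and $V_2$ are viewed inside the same $\mathbb{R}^n$; more carefully, one considers the subspace spanned together with the line $A^*B^*$ and a dimension count forces the existence of a nonzero vector $h$ such that both $A^*+th$ stays in $H_1$ and $B^*+sh$ stays in $H_2$ for small parameters. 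Take the two-dimensional plane $\Pi(h)$ through $A^*$ and $B^*$ spanned by the line $A^*B^*$ and $h$. In $\Pi(h)$ the set $\mathcal{C}\cap\Pi(h)$ has, near $A^*$ and $B^*$, the two parallel supporting line segments $l_1(h)=\{A^*+th\}$ and $l_2(h)=\{B^*+th\}$, exactly the strip configuration of Remark \ref{rem1} and Proposition \ref{CPP O outside}. The function $g$ of Proposition \ref{CPP O outside} is strictly monotone (or strictly convex in the strip case), so $[A^*B^*]$ cannot be a local maximizer of $G$ restricted to this plane, contradicting maximality in $\mathbb{R}^n$.

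For case (ii), with $O$ interior and $F$, the same mechanism applies but the relevant one-dimensional reduction yields the strictly \emph{convex} function $f$ of Proposition \ref{Rem1}, which has a unique minimizer and hence no interior local maximizer at all; this is why the bound tightens to $n-1$. The dimension count is the crux: I would set up the linear-algebraic inequality carefully, noting that in case (ii) the line $A^*B^*$ passes through the interior point $O$, so $A^*B^* \not\subset H_1$ and $A^*B^* \not\subset H_2$ (the flats $H_i$ avoid the interior of $\mathcal{C}$), whereas in case (i) the line $A^*B^*$ meets $\mathcal{C}$ only along $[A^*B^*]$ and one loses one less degree of freedom. Translating this into the Grassmann formula $\dim(V_1+V_2) = d_{\mathcal{C}}(A^*)+d_{\mathcal{C}}(B^*)-\dim(V_1\cap V_2)$, together with $V_1+V_2 \subseteq \mathbb{R}^n$ and the constraint that the direction $A^*B^*$ occupies a separate dimension in case (ii), delivers the two stated bounds. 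The main obstacle I expect is making this dimension count fully rigorous when $V_1$ and $V_2$ are anchored at the two distinct points $A^*$ and $B^*$: one must verify that a common direction $h$ lying in $V_1 \cap V_2$ really produces \emph{parallel} boundary segments through both points simultaneously (so that the strip/angle reduction of Section 2 legitimately applies), rather than merely asserting the existence of $h$ abstractly. Once that geometric translation is confirmed, the strict convexity results of Propositions \ref{Rem1} and \ref{CPP O outside} close the argument immediately.
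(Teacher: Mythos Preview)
Your overall strategy matches the paper's: pass to the linear subspaces $V_1=H_1-A^*$ and $V_2=H_2-B^*$, show $V_1\cap V_2=\{0\}$ by a two-dimensional strip reduction to the strictly convex functions of Section~2, and then read off the dimension bound from the Grassmann formula. For part~(i) this is exactly what the paper does, and your sketch is correct.

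The gap is in part~(ii). To tighten the bound from $n$ to $n-1$ you need $\overrightarrow{A^*B^*}\notin V_1+V_2$, and your stated reason---that the line $A^*B^*$ is contained in neither $H_1$ nor $H_2$ because the $H_i$ miss the interior of $\mathcal{C}$---only gives $\overrightarrow{A^*B^*}\notin V_1$ and $\overrightarrow{A^*B^*}\notin V_2$ separately. That does \emph{not} exclude $\overrightarrow{A^*B^*}\in V_1+V_2$: two lines through the origin in $\mathbb{R}^2$ already show this fails. The paper closes this gap with a second, distinct two-dimensional reduction: if $\overrightarrow{A^*B^*}=v_1+v_2$ with $v_i\in V_i$, then the point $E:=A^*+v_1=B^*-v_2$ lies in $H_1\cap H_2$, and one obtains an \emph{angle} $\angle A^*EB^*$ (not a strip) inside which $[A^*B^*]$ would be a local maximizer of the strictly convex function $f$ of Proposition~\ref{Rem1}, a contradiction. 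So case~(ii) requires both the strip reduction (for $V_1\cap V_2=\{0\}$) and an additional angle reduction (for $H_1\cap H_2=\emptyset$); your proposal collapses these into one step and the linear-algebraic justification you give for the sharper bound is not sufficient as written.
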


\begin{proof}
First note that in both cases $A^* \not = B^*$. For the case (ii) this is obvious because $O$ is an interior point for
$[A^*B^*]$. In the case (i), this follows from the assumptions that $\mathcal C$ has nonempty interior and that $[A^*B^*]$ is a local maximizer.
Let $H_1, H_2$ be the affine subspaces from
the definition of $d_{\mathcal C}(A^*)$ and $d_{\mathcal C}(B^*)$ respectively.
Since $A^*$ is an interior point (relative to $H_1$) of the set $H_1 \cap \mathcal C$,
we derive that $B^* \not \in  H_1$ (otherwise there would exist a poit $A \in \mathcal C$, different from
$A^* $, and such that $A^* \in [AB^*]$; this would contradict the local maximality of
$[A^*B^*]$).
Similarly, one derives that  $H_2$ does not contain $A^*$.
Further, there are linear
 subspaces $L_1, L_2$ such that $H_1 = A^* +L_1$ and $H_2 = B^* +L_2$.We show next that the
intersection $L_1 \cap L_2$ contains only the zero element of $\mathbb R^n$. Indeed, suppose
there exists a non-zero vector $h \in L_1 \cap L_2$.
Consider the two-dimensional plane
$\Pi$ containing the strip bounded by the two different parallel lines
$\{A^* + th : t\in \mathbb R\} \subset H_1$  and
$\{B^* + th : t\in \mathbb R\} \subset H_2 $.
$\Pi$ contains the line $A^*B^*$ and, therefore, the point $O$ as well.
As $A^*$ is from the interior of $\mathcal C \cap H_1$ relative to $H_1$,
there ezists some $t_1>0$ such that $[A^* - t_1h, A^* + t_1h] \subset \mathcal C$. Similarly, there exists $t_2 >0$ such that  $[B^* - t_2h, B^* + t_2h] \subset \mathcal C$.
This means that the segment
$[A^*B^*]$ is a local maximizer for the function $g$ from Section 2 corresponding to this strip when $O \not \in \mathcal C$ and local maximizer for the function $f$ from Section 2 corresponding to this strip when
$O$ is from the interior of $\mathcal C$.
This contradicts the strict convexity of the functions $f$ and $g$.
 Therefore, both in the case (i) and in the case (ii) we have
$$d_{\mathcal C}(A^*) + d_{\mathcal C}(B^*) = \dim L_1 + \dim L_2 = \dim(L_1 + L_2) \leq n.$$ This completes the proof of (i).

Let now $O$ be an interior point of $ \mathcal C$. We prove first that $H_1 \cap H_2 = \emptyset$. Indeed, if there is a point $E \in H_1 \cap H_2$, then $E\not = A^*$ and $E \not= B^*$. Put $v_1= \overrightarrow{A^*E}$ and
$v_2 = \overrightarrow{B^*E}$. Since $A^*$ is in the relative interior of
$\mathcal C \cap H_1$in $H_1$, there is some $r_1>0$ such that
$[A^* - r_1v_1, A^* + r_1v_1] \subset \mathcal C$.
Similarly, there exists $r_2 >0$ such that
$[B^* - r_2v_2, B^* + r_2v_2] \subset \mathcal C$.
 Consider the two-dimensional plane $\Pi'$ containing the angle $\angle A^*EB^*$. The segment $[A^*B^*]$ being a local maximizer for the function $F$ will be a local maximizer for the function $f$ from Section 2 corresponding to this angle. This contradicts the strict convexity of the function $f$.

It is now easy to realize that $L_1 + L_2$ does not contain the vector $\overrightarrow{A^*B^*}$. Indeed, if
$\overrightarrow{A^*B^*}= v_1 + v_2$
where $v_1 \in L_1$ and $v_2 \in L_2$ then $E:=A^* +v_1=B^* - v_2 \in H_1 \cap H_2$, This implies
$$d_{\mathcal C}(A^*) + d_{\mathcal C}(B^*) = \dim L_1 + \dim L_2 = \dim(L_1 + L_2) < n.$$
\end{proof}

\begin{cor}
Let  $\mathcal C \subset \mathbb{R}^n,\, n \geq 2,$ be a closed convex set
and $O$ be an interior point of it.
Let $[A^*B^*]$ be a local maximizer for the function $F([AB])$ and let there exist a hyperplane $H_{A^*}$ supporting
$\mathcal C$ at $A^*$ and such that $A^*$ is an interior point
(relative to $H_{A^*}$) of the set
$\mathcal C \cap H_{A^*}$.
Then $B^*$ is an extreme point of $\mathcal C$.
 \end{cor}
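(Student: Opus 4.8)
The plan is to read off the dimension $d_{\mathcal C}(A^*)$ directly from the hypothesis and then feed it into Proposition \ref{p5}(ii). First I would note that the supporting hyperplane $H_{A^*}$ is an $(n-1)$-dimensional affine subspace and that, by assumption, $A^*$ lies in the relative interior of $\mathcal C \cap H_{A^*}$ in $H_{A^*}$. By Definition \ref{dimlem}, the existence of this $(n-1)$-dimensional witness already gives $d_{\mathcal C}(A^*) \geq n-1$. On the other hand, since $H_{A^*}$ supports $\mathcal C$, the body lies in one of the closed half-spaces determined by $H_{A^*}$, so $A^*$ is a boundary point; as $d_{\mathcal C}(A^*)=n$ holds precisely for interior points, we obtain $d_{\mathcal C}(A^*) \leq n-1$, and hence $d_{\mathcal C}(A^*)=n-1$.

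Next I would apply Proposition \ref{p5}(ii) to the local maximizer $[A^*B^*]$ of the function $F$, the hypotheses of which are met since $O$ is an interior point of $\mathcal C$. This yields
$$d_{\mathcal C}(A^*) + d_{\mathcal C}(B^*) \leq n-1.$$
Substituting $d_{\mathcal C}(A^*)=n-1$ and using that the dimension of a point is always nonnegative forces $d_{\mathcal C}(B^*)=0$. By the remark following Definition \ref{dimlem}, the condition $d_{\mathcal C}(B^*)=0$ is equivalent to $B^*$ being an extreme point of $\mathcal C$, which is exactly the desired conclusion.

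I do not anticipate a genuine obstacle here: the corollary is a direct numerical consequence of Proposition \ref{p5}(ii). The only point that needs a little care is the identification $d_{\mathcal C}(A^*)=n-1$, namely verifying both that the supporting hyperplane $H_{A^*}$ serves as the $(n-1)$-dimensional subspace required by the definition of dimension (giving the lower bound) and that no $n$-dimensional neighborhood can realize $A^*$ as an interior point because $A^*$ lies on $\partial\,\mathcal C$ (giving the upper bound). Everything after that is bookkeeping with the inequality and the extreme-point characterization.
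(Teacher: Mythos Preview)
Your proposal is correct and follows exactly the paper's approach: the paper's proof is the single line ``Since $d_{\mathcal C}(A^*)= n-1$, we derive that $d_{\mathcal C}(B^*)=0$,'' and you have simply spelled out why $d_{\mathcal C}(A^*)=n-1$ holds before invoking Proposition~\ref{p5}(ii) in the same way.
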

 \begin{proof}
 Since $d_{\mathcal C}(A^*)= n-1$, we derive that $d_{\mathcal C}(B^*)=0$.
 \end{proof}

 Similarly, If $d_{\mathcal C}(A^*) = n-2$, then $B^*$ is either an extreme point
 of $\mathcal C$ or lies on some segment contained in $\partial \, \mathcal C$.
 Note also that in the case when $\mathcal C$ is a polygon in $\mathbb R^2$, this corollary reduces to Proposition \ref{polygon}.

\section{Convex polytopes}\label{sc}

This section presents some results for compact convex sets $\mathcal C \subset \mathbb R^n, n\geq 2, $ with nonempty interior which are polytopes (convex hulls
of finitely many points or, equivalently, bounded sets which can be presented as the
 intersections of finitely many closed half-spaces determined by hyperplanes).
 \smallskip

As was mentioned above, the planar version of Proposition \ref{p5} generalizes Proposition
\ref{polygon}: if $[A^*B^*]$ is a local maximizer for a convex polygon $\mathcal C$
and $O$ is a point from its interior, then at least one of the points $A^*$ and $B^*$ is a vertex of the polygon.
The following example of a tetrahedron shows that this is not true in higher dimensions.

\noindent{\bf Example 1.} Let $\mathcal C$ be the convex hull of the points $A_1(1,-1,0)$, $A_2(1,1,0), A_3(-1,0,1)$, $A_4(-1,0,-1)$. Let $O$ be the point $(0,0,0)$ (Figure 6).

 \begin{figure}[h]  \label{figure 6}
\centering
\includegraphics[scale=0.8]{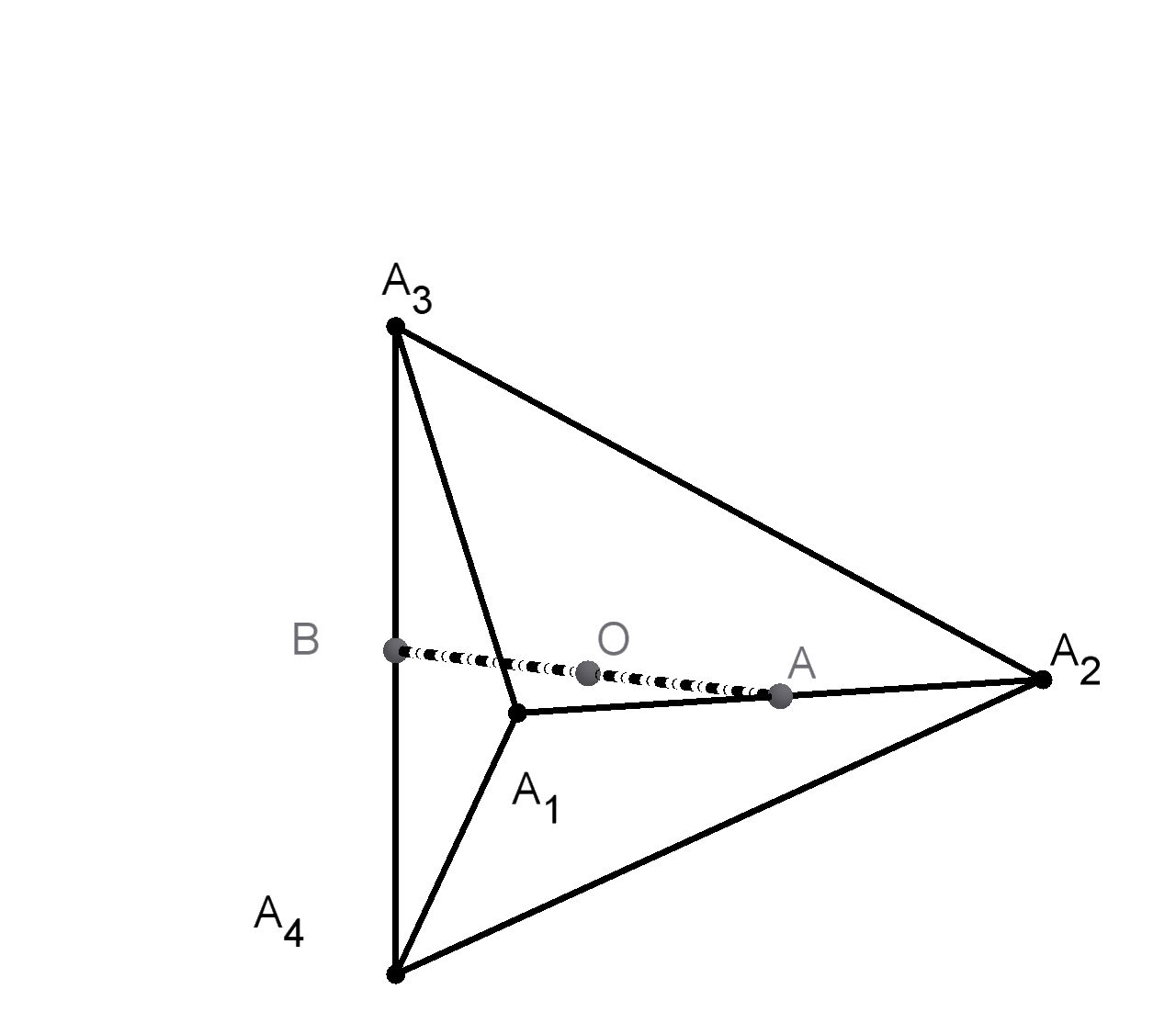}
\caption*{Figure 6}
\end{figure}

Every line through $O$ cuts a non-degenerate segment from $\mathcal C$. Consider the segment $[AB]$ with end-points $A(1,0,0)$ and $B(-1,0,0)$.
Then it is easy to check that the four segments cut from the tetrahedron by lines $OA_i$, $i=1,2,3,4$
have the same length
equal to $\displaystyle\frac{4\sqrt{5}}{5}<2=|AB|.$ So, the
endpoints of any global maximizer $[A^*B^*]$ for the function $F$ are not among the vertices of the tetrahedron.
Proposition \ref{p5} (ii) implies now that the end points of $[A^*B^*]$ are also not internal points of the triangles on the surface of the
tetrahedron. The only remaining options for
$A^*$ and $B^*$ are to belong to the opposite edges $[A_1A_2]$ and $[A_3A_4]$ (as does $[AB]$) or to belong to a pair of opposite skew edges.
For symmetry reasons, and because $O \in [A^*B^*]$, the points $A^*$ and $B^*$ must be the mid-points of the
corresponding opposite skew edges.
There are only three such segments, and they are of equal length. Let the segment $[CD]$ where $C=(0, -0.5, -0.5)$ (middle of $[A_1A_4]$)
and $D=(0, 0.5, 0.5)$ (middle of $[A_2A_3]$) be one of these three segments.
Its length is $\sqrt{2} <2$. All this shows that $[AB]$ is the only global maximizer $[A^*B^*]$ for the function $F$ corresponding to the tetrahedron $\mathcal C$ and the point $O$.

\medskip

Let us consider another example which shows that Proposition \ref{polygon} is not valid if $O$ is outside the polygon.

\noindent{\bf Example 2.} Let $\mathcal C \subset \mathbb R^2$ be the right-angled triangle $PQR$ where $P=(0,0)$, $Q=(6,0)$, $R=(0,2)$, and let $O=(0,3)$ (Figure 7). Consider the line $OA$ where $A=(3,0)$. Then $B=(1.5, 1.5)$
and $|AB| = \sqrt{4.50} >2 = |PR|$. This suffices to conclude that the end-points of a global
maximizer $[A^*B^*]$ for the function $G$ corresponding this problem are not among vertices of $\mathcal C$.

 \begin{figure}[h]  \label{figure 7}
\centering
\includegraphics[scale=0.8]{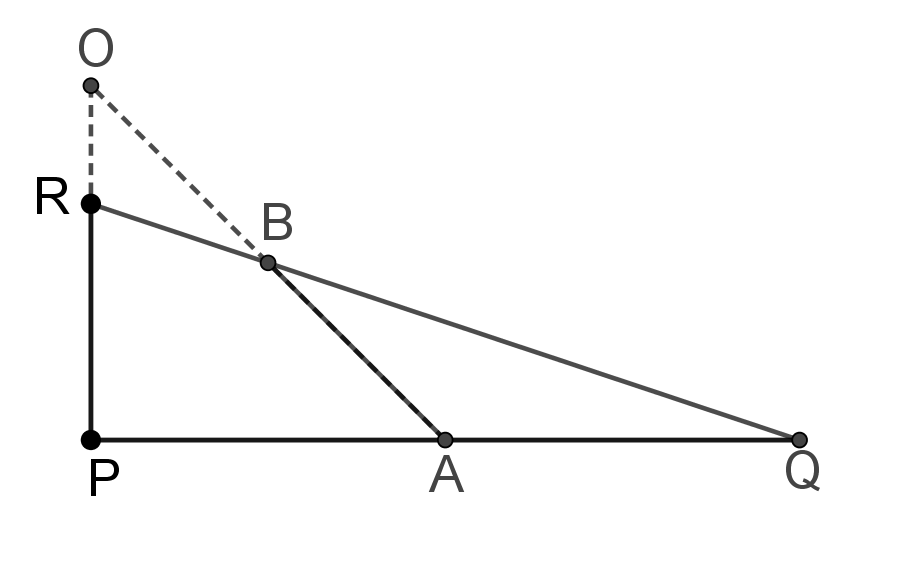}
\caption*{Figure 7}
\end{figure}

Our next goal is to show that, when the point $O$ is far from a polygon
$\mathcal C \subset \mathbb R^2$, then the local maximizers do contain a vertex of it.
More precisely, we will show that the
inequality in Proposition \ref{p5} (ii) is valid also in the case when $O$ is at
a sufficiently large distance from the convex polytope $\mathcal C \subset \mathbb R^n, n \geq 2$.
\smallskip

We use the term {\it ``exposed face of the polytope $\mathcal C$''} for any set $\mathcal C \cap \Pi$
where $\Pi$ is a supporting hyperplane for $\mathcal C$. An exposed face $\mathcal C \cap \Pi$ that
has nonempty interior relative to $\Pi$ is called {\it ``facet''}.
By {\it ``dimension of an exposed face''} we have in mind the dimension of the affine hull of the exposed face. This affine hull is a translation of some linear subspace $L$ of the same dimension.
In particular, the dimension of a facet is $n-1$.
Exposed faces of dimension 1 are also called {\it ``edges''}.
The vertices of the polytope are exposed faces of dimension 0.

Let further $A$ be a point from the boundary of the polytope $\mathcal C$ and $H$ be the
affine subspace from Definition \ref{dimlem} which determines the dimension $d_{\mathcal C}(A)$. It is easy to see that $H$ does not intersect the interior of $\mathcal C$.  Also, it is possible to prove
that $H \cap \mathcal C$ is an exposed face of the polytope.
Finally, it is well-known that any polytope has finitely many exposed faces.

\begin{prop}\label{p8}
For any convex polytope $\mathcal C$ in $\mathbb R^n$  with nonempty interior
there exists a bounded set $U$ such that if $O\not\in U,$ and $[A^*B^*]$ is a local
maximizer for the function $G$
then
$$d_{\mathcal C}(A^*)+d_{\mathcal C}(B^*) \le n-1.$$
In particular, when $n=2$, at least one of the points $A^*$ and $B^*$ is a vertex of the polygon $\mathcal C$.
\end{prop}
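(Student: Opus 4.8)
The plan is to rule out, for $O$ sufficiently far, the \emph{equality} case $d_{\mathcal C}(A^*)+d_{\mathcal C}(B^*)=n$ that Proposition~\ref{p5}(i) leaves open. So suppose $[A^*B^*]$ is a local maximizer for $G$ with $d_{\mathcal C}(A^*)+d_{\mathcal C}(B^*)=n$, and let $H_1=A^*+L_1$, $H_2=B^*+L_2$ be the affine subspaces attached to $A^*$ and $B^*$ as in Definition~\ref{dimlem}. From the proof of Proposition~\ref{p5} we already know $L_1\cap L_2=\{0\}$ and $B^*\notin H_1$, $A^*\notin H_2$. The equality now forces $L_1\oplus L_2=\mathbb R^n$, so $H_1$ and $H_2$ meet in exactly one point $E$; moreover $E$ is not on the line $A^*B^*$ (otherwise $H_1$ would contain $B^*$). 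Hence $A^*,B^*,E$ span a $2$-plane $\Pi'$, and $\Pi'$ contains $O$ because $O$ lies on the line $A^*B^*\subset\Pi'$.

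In $\Pi'$ the lines $m_1:=H_1\cap\Pi'$ and $m_2:=H_2\cap\Pi'$ meet at $E$ and support $\mathcal C\cap\Pi'$ at $A^*$ and $B^*$. Since $A^*$ and $B^*$ lie in the relative interiors of their faces, a small arc of each $m_i$ around them stays in $\mathcal C$, and arguing exactly as in Propositions~\ref{tilde O outside} and~\ref{p5} one checks that $[A^*B^*]$ is a local maximizer of the angle function $g(\varphi)=|OA(\varphi)|-|OB(\varphi)|$ of Proposition~\ref{CPP O outside} for the angle $\angle A^*EB^*$. In particular $[A^*B^*]$ corresponds to an \emph{interior} critical point of $g$, and is a non-degenerate chord ($g>0$ there).

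The decisive feature is self-similarity: a homothety centred at $E$ fixes the arms $m_1,m_2$, preserves the side of $O$, and carries critical configurations of $g$ to critical configurations. Consequently, at any such configuration the ratios $\kappa_A:=|EA^*|/|EO|$ and $\kappa_B:=|EB^*|/|EO|$ depend only on the \emph{direction} of $\overrightarrow{EO}$, not on the distance $|EO|$. The set of directions admitting an interior critical point of $g$ is compact (by the Neovius classification recalled before Proposition~\ref{CPP O outside}, or by a direct compactness argument), and on it $\max(\kappa_A,\kappa_B)$ is continuous. \textbf{This is the crux of the proof:} one must show that $\max(\kappa_A,\kappa_B)\ge\kappa_0>0$ uniformly, i.e.\ that an interior extremal chord cannot degenerate toward the vertex $E$. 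I expect this from the fact that chords collapsing to $E$ occur only at the endpoint $\varphi=-\pi/2+\angle OEO'$ of the domain of $g$, so interior critical points stay away from them; concretely, a sequence of admissible directions with $\max(\kappa_A,\kappa_B)\to0$ would, after rescaling by $|EO|$, yield in the limit a critical configuration with $A^*=B^*=E$, which is impossible.

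Granting $\kappa_0$, the conclusion is immediate. The faces $F_A,F_B$ are bounded and $E$ is one of \emph{finitely many} fixed points (there are finitely many face pairs), so $R:=\max\bigl(\sup_{X\in\bar F_A}|EX|,\ \sup_{X\in\bar F_B}|EX|\bigr)<\infty$, whence $\kappa_0|EO|\le\max(|EA^*|,|EB^*|)\le R$ and therefore $|O|\le|E|+R/\kappa_0$. Letting $U$ be a ball large enough to contain these bounds for each of the finitely many face pairs $(F_A,F_B)$ with $\dim F_A+\dim F_B=n$ and $L_1\oplus L_2=\mathbb R^n$ (and to contain $\mathcal C$), we get that for $O\notin U$ no local maximizer can realise the equality case, so $d_{\mathcal C}(A^*)+d_{\mathcal C}(B^*)\le n-1$. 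When $n=2$ this reads $d_{\mathcal C}(A^*)+d_{\mathcal C}(B^*)\le1$, forcing $d_{\mathcal C}(A^*)=0$ or $d_{\mathcal C}(B^*)=0$; that is, at least one of $A^*,B^*$ is a vertex of $\mathcal C$.
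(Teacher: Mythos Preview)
Your reduction to the equality case $d_{\mathcal C}(A^*)+d_{\mathcal C}(B^*)=n$, the identification of the unique point $E=H_1\cap H_2$, and the passage to the $2$-plane $\Pi'$ through $A^*,B^*,E$ (hence containing $O$) all match the paper's setup. The divergence is exactly where you flag it.

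Two issues with the ``crux'' step. First, in dimension $n>2$ the ratios $\kappa_A,\kappa_B$ are not determined by the direction of $\overrightarrow{EO}$ alone: for a fixed face pair $(F_A,F_B)$ the plane $\Pi'$---and with it the arm directions $\overrightarrow{EA^*}\in L_1$, $\overrightarrow{EB^*}\in L_2$ and the angle $\theta=\angle A^*EB^*$---varies with the configuration, so any compactness argument must run over this larger parameter space. Second, and this is the real gap, your limit sketch does not control what happens as $\theta\to0$: in that regime the rescaled limiting ``angle'' collapses to a single ray and there is no surviving critical-point condition to contradict. Nothing in your argument excludes a sequence of interior critical configurations with $\theta_k\to0$ and $\kappa_A^{(k)},\kappa_B^{(k)}\to0$.

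The paper avoids soft compactness by an explicit inequality. It proves (Lemma~\ref{l9}) that whenever CPP holds for an angle of measure $\theta\in(0,\pi/2)$ one has $2|OA|/|AB|\le 1+1/\sin\theta$; this applies to $[A^*B^*]$ since a local maximizer of $g$ satisfies CPP (Proposition~\ref{CPP O outside}) and forces $\theta<\pi/2$ (Remark~\ref{ostar}). The second, decisive ingredient is a uniform lower bound on $\sin\theta$: since $\overrightarrow{EA^*}\in L_1$, $\overrightarrow{EB^*}\in L_2$ and $L_1\cap L_2=\{0\}$, one has $|\cos\theta|\le c$, where
\[
c:=\max_{(L_1,L_2)}\ \max\{\langle u_1,u_2\rangle:\ u_i\in L_i,\ \|u_i\|=1\}<1,
\]
the outer maximum taken over the finitely many pairs of linear subspaces parallel to faces of $\mathcal C$ with $L_1\cap L_2=\{0\}$. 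Hence $|OA^*|\le M\,|A^*B^*|\le M\,\mathrm{diam}(\mathcal C)$ with $M=\tfrac12\bigl(1+(1-c^2)^{-1/2}\bigr)$, and $U=\{X:\mathrm{dist}(X,\mathcal C)\le M\,\mathrm{diam}(\mathcal C)\}$ works explicitly. The uniform bound on $\theta$ is precisely what your sketch is missing; once you insert it, your $\kappa_0$ follows at once (if $\kappa_A,\kappa_B\to0$ with $|EO|=1$ and $\theta\ge\theta_0>0$, then $|OA^*|\to1$ while $|A^*B^*|\to0$, contradicting Lemma~\ref{l9}).
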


\begin{proof} We start with the following

\begin{lemma}\label{l9} Let $A, B, O, E$ be points in the plane $\mathbb R^2$ such that: $O$ is
outside the angle $\angle AEB$, $O \in AB$, $B \in [AO]$, and
$0 < \theta :=\angle AEB < \pi/2$ (Figure 8).
Suppose CPP has a place for the points $A, B, O$ and the angle $\angle AEB$. Then

$$\frac{2|OA|}{|AB|}\le 1+\frac{1}{\sin\theta}.$$
\end{lemma}

 \begin{figure}[h]  \label{figure 8}
\centering
\includegraphics[scale=0.4]{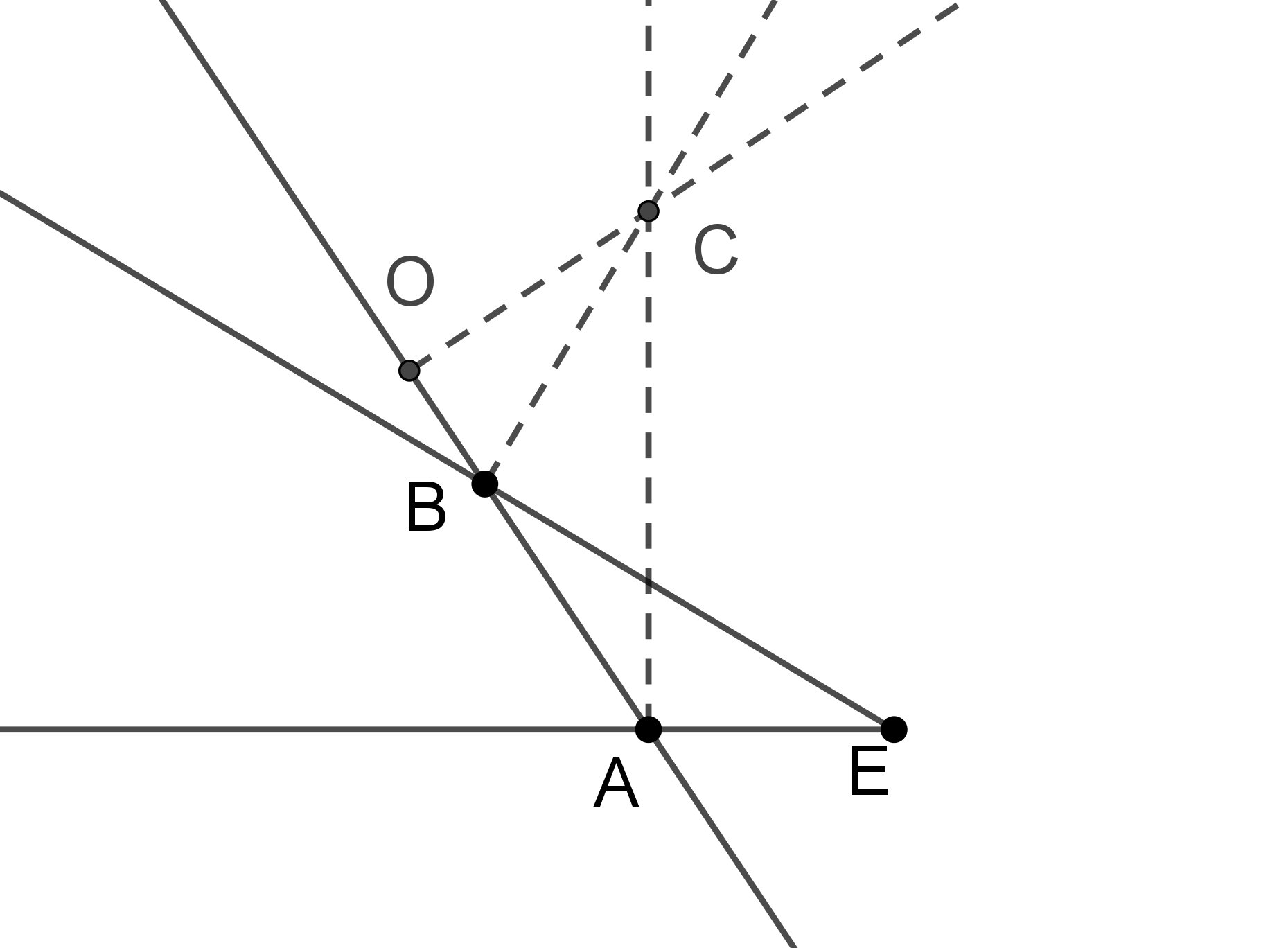}
\caption*{Figure 8}
\end{figure}

\noindent{\it Proof of the lemma.} Denote by $C$ the
intersection point of the perpendiculars to $EA, EB$ and $AB$ at
$A,B$ and $O$, respectively. Then $EABC$ is a convex quadrilateral
inscribed in the circle with diameter $EC.$ Set  $\psi=\angle BAC.$ Then
$$AB=EC\sin\theta,\quad OA=AC\cos\psi,\quad AC=EC\sin(\theta+\psi)$$
and hence
$$\frac{2OA}{AB}=\frac{2\sin(\theta+\psi)\cos\psi}{\sin\theta}
=1+\frac{\sin(\theta+2\psi)}{\sin\theta}$$ which implies the
desired inequality and completes the proof of the lemma.
\medskip

Further, let $\mathcal L$ be the family of all pairs $(L_1,L_2)$ of
linear subspaces of $\mathbb R^n$, each being a translation of the affine hull of some exposed face of $\mathcal C$, and such that
$L_1 \cap L_2$ has just one element (the zero elementt of $\mathbb R^n$).  $\mathcal L$ is
 a nonempty family (each  vertex of $\mathcal C$ belongs to at least two different edges to which correspond different one-dimensional linear subspaces $L_1$ and $L_2$). $\mathcal L$ is also a finite family.
Put
$$c=\max_{(L_1,L_2)\in\mathcal L}\max\{\langle u_1,u_2\rangle:
u_1\in L_1, u_2\in L_2, ||u_1||=||u_2||=1\}.$$
Compactness of the unit sphere and continuity of the function $\langle u_1,u_2\rangle$ yield that $c\in[0,1).$

Suppose now $[A^*B^*]$ is a local maximizer  for the function $G$ which correspond to the polytope $\mathcal C$ and a point
$O\not\in \mathcal C$.
Let $H_{A^*}, H_{B^*}$ be the affine subspaces from the definition of $d_{\mathcal C}(A^*)$
and $d_{\mathcal C}(B^*)$ respectively.
Denote by $L_{A^*}$ and $L_{B^*}$ the linear
spaces such that $H_{A^*} = A^* + L_{A^*}$
and $H_{B^*} = B^* + L_{B^*}$.
From the proof of Proposition \ref{p5} we know that $A^* \not = B^*$ and that $L_{A^*} \cap L_{B^*}$ contains only the zero vector of the
space. Hence the pair $(L_{A^*}, L_{B^*}) $ belongs to $\mathcal L$. Proposition \ref{p5} (i) says also that $\dim(L_{A^*}) + \dim(L_{B^*}) \leq n$.
\smallskip

Let $U :=\{X\in\mathbb R^n: \text{dist}(X,
\mathcal C)\le M\text{diam}(\mathcal C)\}$ where $M:=\frac{1}{2}\left(1+\frac{1}{\sqrt{1-c^2}}\right)$, and let $O \not \in U$. Suppose
$d_{\mathcal C}(A^*) + d_{\mathcal C}(B^*)=\dim(L_{A^*}) +\dim(L_{B^*}) = n$.  Then the non-zero vector $\overrightarrow{A^*B^*} = h' + h''$ where $h'\in L_{A^*}$ and $h''\in L_{B^*}$.
Put $E= A^* + h'= B^* - h''$ and consider
the angle $\angle A^*EB^*$. Note that $A^*$ belongs to the relative interior of
$H_{A^*}\cap \mathcal C$ in $H_{A^*}$ and
$B^*$ belongs to the relative interior of
$H_{B^*}\cap \mathcal C$ in $H_{B^*}$. Therefore, the segment $[A^*B^*]$ is a local
maximizer for the function $g$ corresponding to the angle $\angle A^*EB^*$ and the point $O$.
This implies (see Remark \ref{ostar}) that
$\angle A^*EB^*<\pi/2$. Then Lemma \ref{l9} implies that $O \in U$, a contradiction.

\end{proof}

At the end of this section, we show that, for a large class of polytopes, the function $G$ does not have
minimizers intersecting the interior of $\mathcal C$ when $O$ is far away from the polytope.

\begin {prop}
Let $\mathcal C$ be a polytope with nonempty interior and without parallel facets. Then there exists a bounded set $U$ such that if $O\not\in U,$ the function $G$ does not have local minimizers intersecting the interior of $\mathcal C$. In particular, every local minimizer of $G$ is contained in the boundary of $\mathcal C$ when $O$ is at a large distance from $\mathcal C$.
\end{prop}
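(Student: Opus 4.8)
The plan is to mimic the proof of Proposition \ref{p8}, replacing the rôle played there by the constant $c$ with the \emph{finite set of dihedral angles} of the polytope, and then to bound the distance from $O$ to $\mathcal C$ via Lemma \ref{l9}. First I would argue that a local minimizer $[A^*B^*]$ of $G$ that meets the interior of $\mathcal C$ is forced into a rigid position. By Theorem \ref{Ooutsidemin}, $\mathcal C$ has a unique supporting hyperplane $\Pi_1$ at $A^*$ and a unique supporting hyperplane $\Pi_2$ at $B^*$. For a polytope a boundary point with a unique supporting hyperplane is exactly a relative-interior point of a facet; hence $A^*$ lies in the relative interior of a facet $F_1\subset\Pi_1$ and $B^*$ in the relative interior of a facet $F_2\subset\Pi_2$. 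Since the chord $[A^*B^*]$ crosses the interior of $\mathcal C$, the facets $F_1$ and $F_2$ are distinct, and by hypothesis they are not parallel, so $\Pi_1$ and $\Pi_2$ are not parallel.

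Next I would run the two-dimensional reduction exactly as in the proof of Theorem \ref{thmmin} and Theorem \ref{Ooutsidemin}. The line $A^*B^*$ is orthogonal to $\Pi_1\cap\Pi_2$, so there is a $2$-plane $\Pi$ containing $A^*B^*$ and orthogonal to $\Pi_1\cap\Pi_2$; in $\Pi$ the traces $l_1=\Pi_1\cap\Pi$ and $l_2=\Pi_2\cap\Pi$ meet at a vertex $E$ and bound an angle $\angle A^*EB^*$ whose measure $\theta$ equals the dihedral angle between $F_1$ and $F_2$. Restricting the configuration to $\Pi$ shows that $[A^*B^*]$ is a local minimizer for the function $g$ associated in Proposition \ref{CPP O outside} with this angle and the point $O$, and that CPP holds for $\angle A^*EB^*$ and the points $A^*,B^*,O$. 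The essential new observation is that $\theta$ ranges over the finite set of dihedral angles of $\mathcal C$ between pairs of non-parallel facets, a quantity \emph{independent of} $O$.

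Two facts then finish the argument. By Remark \ref{ostar}, the function $g$ attached to an angle of measure $\ge\pi/2$ is strictly monotone and has no interior critical point, hence no interior local minimizer; so the existence of our minimizer forces $\theta<\pi/2$. Let $\theta_0$ be the smallest dihedral angle of $\mathcal C$ that is $<\pi/2$ (if none exists, the reduction already yields a contradiction for every exterior $O$, and one may take $U=\mathcal C$). Put $M:=\tfrac12\bigl(1+\tfrac{1}{\sin\theta_0}\bigr)$ and $U:=\{X\in\mathbb R^n:\operatorname{dist}(X,\mathcal C)\le M\operatorname{diam}(\mathcal C)\}$, a bounded set. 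Since CPP holds for $\angle A^*EB^*$ with $0<\theta<\pi/2$, $O\in A^*B^*$ and $B^*\in[OA^*]$, Lemma \ref{l9} applies and gives $2|OA^*|/|A^*B^*|\le 1+1/\sin\theta\le 1+1/\sin\theta_0$. Hence $\operatorname{dist}(O,\mathcal C)\le|OA^*|\le M|A^*B^*|\le M\operatorname{diam}(\mathcal C)$, i.e. $O\in U$. Contrapositively, if $O\notin U$ then $G$ has no local minimizer meeting the interior of $\mathcal C$; equivalently, every local minimizer lies in $\partial\mathcal C$.

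I expect the only delicate point to be the bookkeeping of the two-dimensional reduction: verifying that CPP and the local minimality of $g$ genuinely transfer to the angle $\angle A^*EB^*$ in $\Pi$, and that the angle occurring there is the \emph{fixed} dihedral angle rather than an $O$-dependent quantity. All of this is already carried out in the proof of Theorem \ref{Ooutsidemin}, so the argument reduces to invoking that reduction and combining Remark \ref{ostar} with Lemma \ref{l9} in direct analogy with Proposition \ref{p8}; no genuinely new estimate is needed beyond recognizing the finiteness of the set of dihedral angles, which yields the uniform lower bound $\sin\theta\ge\sin\theta_0>0$.
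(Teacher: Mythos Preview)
Your proposal is correct and follows essentially the same route as the paper's proof: invoke Theorem \ref{Ooutsidemin} to place $A^*$ and $B^*$ in the relative interiors of two non-parallel facets, reduce to a two-dimensional angle $\angle A^*EB^*$ whose measure is the dihedral angle between these facets (using orthogonality of $A^*B^*$ to $\Pi_1\cap\Pi_2$), force $\theta<\pi/2$ via Remark \ref{ostar}, and then apply Lemma \ref{l9} together with the finiteness of the set of dihedral angles to bound $\operatorname{dist}(O,\mathcal C)$. The only cosmetic difference is that the paper takes $m=\min\{|\sin\theta|:\theta\in\Theta\}$ over \emph{all} pairs of facets, whereas you use $\sin\theta_0$ with $\theta_0$ the smallest dihedral angle below $\pi/2$; both choices yield a valid bounded $U$.
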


\begin{proof}
Let $O \not \in \mathcal C$ and $[A^*B^*]$ be a local minimizer for the function $G$ which contains interior points of $\mathcal C$.
According to Theorem \ref{Ooutsidemin}, only one hyperplane supports the polytope at the point $A^*$. This means that
$A^*$ is an
interior point of the facet determined by this hyperplane and, therefore, this hyperplane coincides with $H_{A^*}$.
Similarly,  $B^*$ is an
interior point of the facet $\mathcal C \cap H_{B^*}$ relative to $H_{B^*}$.
Put $H:= H_{A^*} \cap H_{B^*}$. Since there are no parallel facets, $H$ is a nonempty $(n-2)$-dimensional affine subspase. Let $L$ be the $(n-2)$-dimensional linear subspace corresponding to $H$.
Take a nonzero vector $h \in L$ and consider the strip
bounded by the two dufferent lines $\{A^* + t h : t \in R\} \subset H_{A^*}$
and $\{B^* + t h : t \in R\} \subset H_{B^*}$. Then
$[A^*B^*]$ is a local minimizer for the function $g$ corresponding to this strip. This implies that the line $OA^* = OB^*$ is perpendicular to the vector $h$ and, therefore, to $L$.
Hence, there exists a uniquely determined two-dimensional plane $\Pi'$
that contains $OA^*$ and is orthogonal to the
$(n-2)$-dimensional affine subspace $H$.
Let $E= H \cap \Pi'$. Then $[A^*B^*]$ is a local minimizer for the function $g$
corresponding to the angle $\angle A^*EB^*$.
Note that the angle $\angle A^*EB^*$ is, actually, the angle between the hyperplanes $H_{A^*}$ and $H_{B^*}$, and we consider it to be the angle between the two facets $H_{A^*}\cap \mathcal C$ and $H_{B^*}\cap \mathcal C$.
This implies (see Remark \ref{ostar}) that $\angle A^*EB^* <\pi/2$.
By Lemma \ref{l9}, we get
$$\frac{2|OA^*|}{|A^*B^*|}\le 1+\frac{1}{\sin \angle  A^* E B^*}.$$
 Denote by $\Theta$ the set of all numbers $\theta$ which are angles between two facets of $\mathcal C$.
 Since there are no parallel facets, and the number of facets is finite, the number
 $m= min\{|sin \theta|: \theta \in \Theta\} >0$. It follows that $O \in U$ where
$$U :=\{X\in\mathbb R^n: \text{dist}(X,
\mathcal C)\le M\text{diam}(\mathcal C)\}\, \text{with}\, M:=\frac{1}{2}\left(1+\frac{1}{m}\right).$$

\end{proof}

\section{Appendix}
This section presents briefly two practice-oriented problems in which CPP plays some role.

\subsection {A special Parking problem (\cite{PSK}).}

A car (wheel-chair, baby carriage) is on the street and has to be parked in the basement of a house over a slope (see Figure 9) without damaging the bottom of the vehicle.

\begin{figure}[h]  \label{f9}
\centering
\includegraphics[scale=0.7]{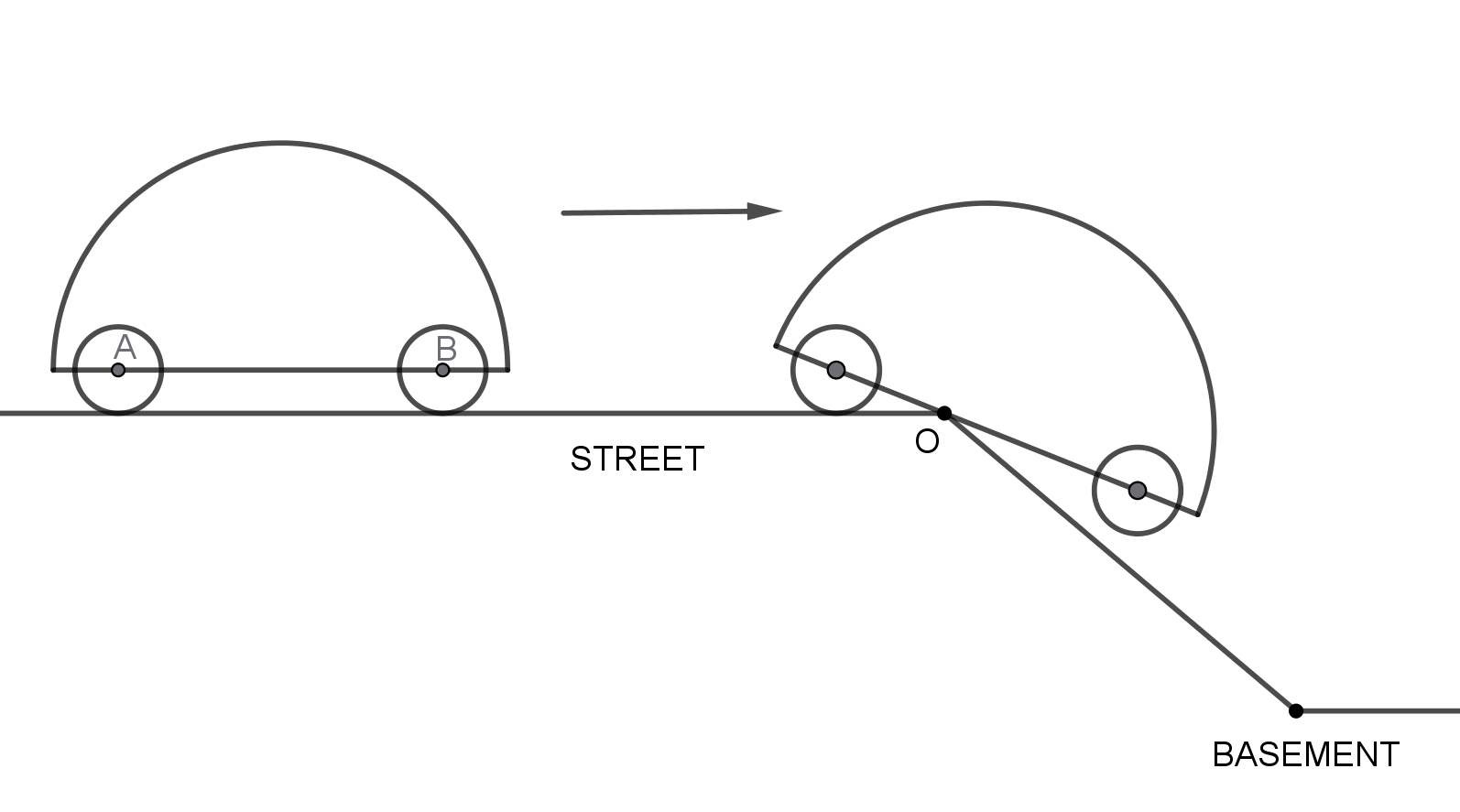}
\caption*{Figure 9}
\end{figure}

Whether or not this is possible depends on the distance $d $ between the centers of the wheels (points $A$ and $B$), the radius  $r$ of the wheels and the steepness of the slope. Suppose the slope and the radius $r$ are given. Then, a natural question arises:

{\it ``What is the maximal $d$ such that a car with $d = |AB|$ can overcome the slope keeping the bottom safe?''}

 An upper bound for $d$ is given by the mathematical problem considered in this paper. It is depicted in Figure 10:

{\it ``Find the shortest segment $[DE]$ passing through $O$ with end points on the dotted line (consisting of points $r$-distanced from the street or the slope)?''.}

\begin{figure}[h]  \label{f9}
\centering
\includegraphics[scale=0.8]{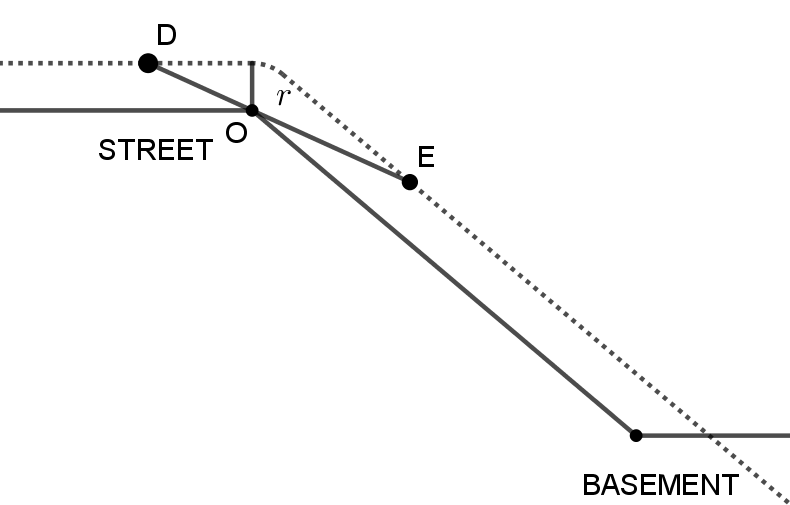}
\caption*{Figure 10}
\end{figure}

  As we know, the solution to this problem is governed by CPP.

 \subsection {Estimating the distance to a broadcasting object}

 A point-like source $O$ casts (radiates, sends) signals in all directions. An observer knows the direction the signals are coming from and wants to find, at least approximately, the distance to the source.
The observer disposes with a solid object (shaped as in Figure 11) made of material which absorbs the intensity (the strength) of the signals proportionally to the path-length the signals travel through the object.

\begin{figure}[h]  \label{f8}
\centering
\includegraphics[scale=0.3
]{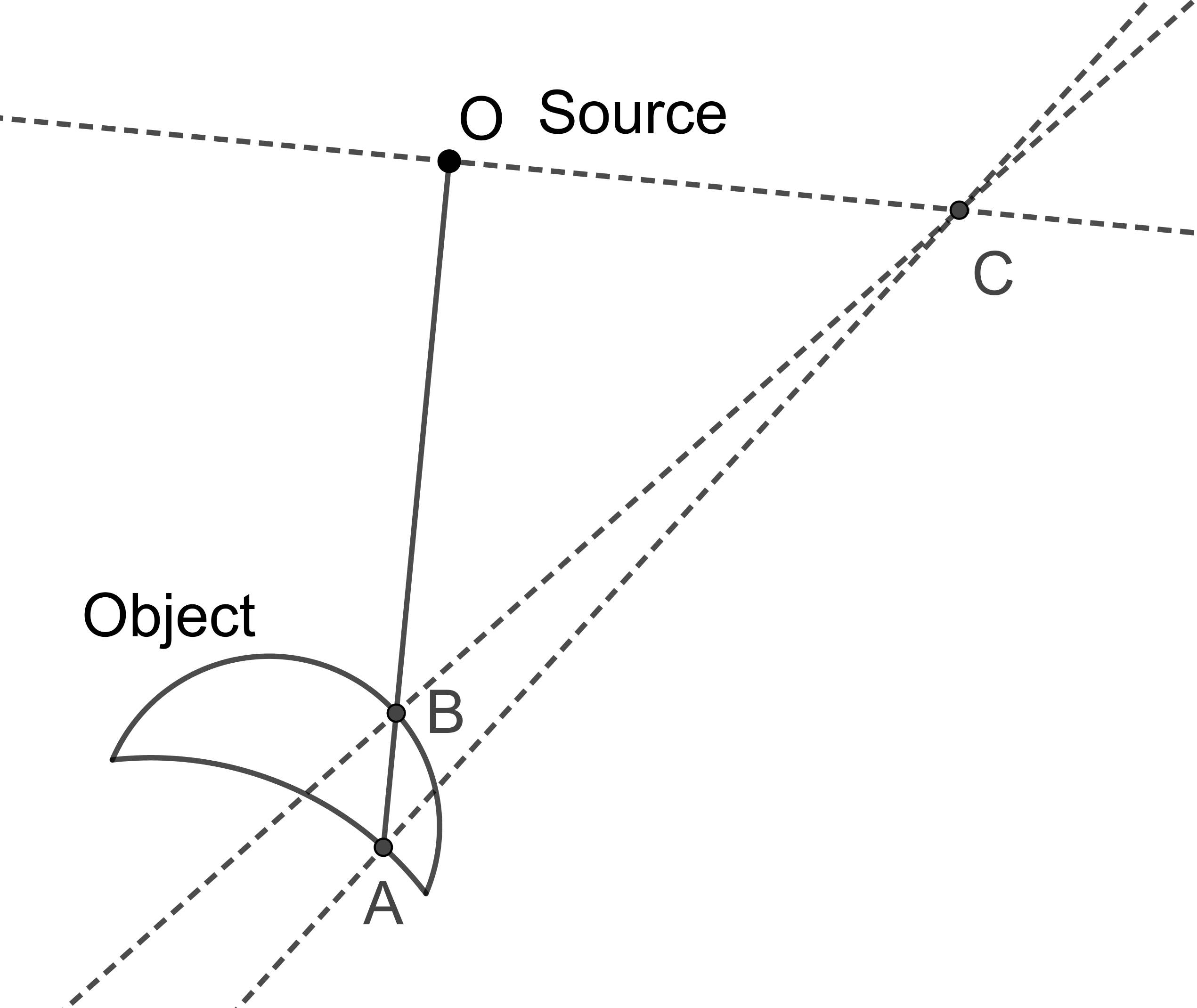}
\caption*{Figure 11}
\end{figure}

Placing sensors along the smaller arc, one finds the point $A$
where the signal intensity is minimal. Since the direction to the
source $O$ is known, one finds the point $B \in [OA]$ on the other
arc of the object. The segment $[AB]$ is a local maximum among the
segments cut from the object by lines containing point $O$. Since
both arcs are smooth curves, Lemma \ref{linearization} implies
that CPP holds. Let $C$ be the intersection of the normals to the
object at the points $A$ and $B$. By CPP, the origin $O$ of the
signals must belong to the perpendicular from $C$ to $AB$.

\vspace{1cm}

{}

\end{document}